\newtheorem{assumption}[definition]{Assumption}
\newcommand{\C}{\mathbb C}
\newcommand{\N}{\mathbb N}
\newcommand{\R}{\mathbb R}
\DeclareMathOperator{\codim}{codim}
\DeclareMathOperator{\dist}{dist}
\DeclareMathOperator{\divergence}{div}
\DeclareMathOperator{\spa}{span}
\DeclareMathOperator{\subspace}{subspace}
\DeclareMathOperator{\supp}{supp}
\DeclareMathOperator*{\esssup}{ess\,sup}
\DeclareMathOperator{\trace}{tr}
\title{% Please, capitalize only the first word
    Geometric spectral theory of quantum graphs
    }
\author{% Please, use "Firstname Lastname" format, without abreviations
    James Kennedy
    }
\abstract{%
These are lecture notes from a course given at the summer school ``Heat kernels and spectral geometry: from manifolds to graphs'' in Bregenz, Austria, 2022. They are designed to be accessible to doctoral level students, and include background chapters on Laplacians on domains and quantum graphs before moving on to specialised topics involving the dependence and optimisation of operator eigenvalues on a metric graph in function of the graph geometry, drawn in part from the recent literature. 
    }
\keywords{% 2-5 keywords
    Laplacian, metric graph, spectrum, eigenvalue estimates.
    }
\begin{document}

\section{Introduction} 
%\addcontentsline{toc}{chapter}{Preface}

These lecture notes are a moderately edited version of course notes written for an eponymous short course I gave at a summer school entitled ``Heat kernels and spectral geometry: from manifolds to graphs'', which drew around 50 mathematicians, mostly doctoral students and postdocs, to Bregenz, Austria, for a week in August/September 2022. The original version of these notes, distributed to all summer school participants, covered exactly the same material as the lectures, but provided more detail and background in a number of places (as well as references and exercises); it may be accessed online at \cite{kennedylec}. An evolved version of the original notes, quite similar to the current version but without the exercises, was also submitted in the scope of my Portuguese \emph{agrega\c{c}\~ao} (habilitation) exams.\medskip

These notes centre around how the spectrum of a quantum graph (i.e.~a Schrödinger-type differential operator defined on a metric graph) depends on the geometric and topological properties of the graph. However, the notes are aimed at non-specialists with no prior exposure to quantum graphs; 
in principle no more than a solid grounding in analysis and functional analysis is needed, although some exposure to the modern theory of (linear) partial differential equations is highly advantageous.

As such, the first two chapters are introductory in nature, covering the basics of weak solution theory and the spectral theorem for the Laplacian (as one might see in an advanced master's course) and a construction of metric graphs and differential operators on them, respectively. As such, few to no citations have been included on the understanding that this material is standard/well known/folklore, although no claim is made to originality or authorship of any part (the particular organisation and presentation of the topics is my own, but unlikely original). Much of Chapter~\ref{chapter:domains} is likely similar in content to parts of \cite{brezis}, which was suggested background reading for the participants, although \cite{arendturban} is probably closer to my own way of thinking. Likewise, Chapter~\ref{chapter:quantum-graphs} was (with a few, marked, exceptions) not written following any particular source, but was likely influenced by a number of works (including ones partly due to me). Possible influences include, but are by no means limited to, \cite{band17,berkolaiko16,bkkm1,bkkm2,berkolaiko,kklm,mugnolo}.\medskip

The core of the matter is Chapter~\ref{ch:geometric-spectral-theory}, which covers the evolution of eigenvalue estimates for quantum graph Laplacians over the last 35 years (including a brief apology for the study of geometric spectral theory in Section~\ref{sec:gst-domains}, as well as references to related topics such as Weyl asymptotics or the study of nodal domains and nodal counts). This draws far more heavily on recent literature and contains no new research, but I believe the synthesis as such is original. For the current state of the art, see, for example, \cite{bkkm2,bkkm3}.\medskip

An attempt has been made to give a balanced overview of the history and main theorems in the quite specific area of estimates for Laplacian eigenvalues based on the geometry of the graph; however, the choice of topics, in particular the focus on eigenvalue bounds and ``surgery''-type methods for examining the relation between the spectrum and the graph, is somewhat idiosyncratic and reflects my own interests, and is arguably weighted towards my own contributions precisely because of that.\medskip

There are a few deliberate changes from the original version to improve readability and usefulness as a standalone reference, roughly in keeping with the changes introduced for the \emph{agrega\c{c}\~ao} version; in particular, a few previously ``staccato'' passages of text and motivations have been filled out, and a bit more historical context is given in several places. More significantly, I have also introduced a few more recent and advanced results in whose creation I was involved, which will inevitably be in more of a ``survey article'' style than a ``lecture notes'' style. To quarantine any effect on the balance and flow of the lecture notes these are collected in a new final Section~\ref{sec:adv-surgery} not present in the original version. On the other hand, the exercises originally included for the benefit of the participants have been kept, and are in an appendix to this version.\medskip

The summer school was organised by Delio Mugnolo (Hagen) and Pavel Kurasov (Stockholm) within the framework of Action CA18232 \emph{Mathematical models for interacting dynamics of networks} of the COST Association (European Cooperation in Science and Technology), see \url{https://mat-dyn-net.eu/}, focused, as the title suggests, on various aspects of difference and differential equations on graphs and networks. Each of the three courses (including, of course, the one which gave rise to these lecture notes) consisted of four 90-minute lectures given in the mornings; there was free time in the afternoons to work on the exercises.\medskip

My thanks go, naturally, to the two organisers who are also collaborators on and around the topic of these notes, their organisational assistant Anna Liza Schonlau, the summer school participants themselves for their constructive feedback, comments and suggestions, as well as many other past and present co-authors and colleagues, including but not limited to Gregory Berkolaiko, Marco Düfel, Matthias Hofmann, Corentin Lena, Gabriela Malenov\'a, Marvin Plümer, Jonathan Rohleder, Andrea Serio and Matthias Täufer.\medskip

Funding for the summer school was provided by the European Union via the COST Association, as well as the University of Hagen, Germany. My own work in this area over the last six or so years was also supported by the Portuguese Science Foundation (Funda\c{c}\~ao para a Ci\^encia e a Tecnologia) via the grants IF/01461/2015, PTDC/MAT-CAL/4334/2014, PTDC/MAT-PUR/1788/2020 and UIDB/00208/2020, and before that by the Alexander von Humboldt Foundation, Germany.\medskip

% James Kennedy\\
% Lisbon, December 2023

% \newpage
% \tableofcontents
% \thispagestyle{empty}
% %\newpage
% %\subsection*{Pref\'acio} 

% \newpage
%
%\subsection*{Nota\c{c}\~oes}
%\begin{list}{}{}
%
%\end{list}

%
%
\section{Laplacians on domains}%and Schr{\"{o}}dinger operators on domains}
\label{chapter:domains}

\subsection{From forms to operators}
\label{sec:forms}

We start by recalling the form approach to unbounded linear operators and the spectral theorem.

\begin{assumption}
$V$ and $H$ are separable Hilbert spaces such that $V$ is compactly and densely embedded in $H$. (That is, there is a compact injective mapping $i: V \to H$ such that $i(V)$ is dense in $H$.)
\end{assumption}

We let $a: V \times V \to \C$ be a Hermitian form (symmetric sesquilinear form), that is,
\begin{displaymath}
\begin{aligned}
	a(\alpha u + \beta v,w) &= \alpha a(u,w) + \beta a(v,w) \qquad &&\text{for all } u,v,w \in V,\, \alpha,\beta \in \C,\\
	a(u,v) &= \overline{a(v,u)} \qquad &&\text{for all } u,v \in V,
\end{aligned}
\end{displaymath}
(in particular, $a(u,u) \in \R$ for all $u \in V$), and say that $a$ is:
\begin{itemize}
\item \emph{bounded} if there exists $M>0$ such that
\begin{displaymath}
	|a(u,v)| \leq M\|u\|_V \|v\|_V \qquad \text{for all } u,v\in V;
\end{displaymath}
\item \emph{$H$-elliptic} if there exist $\xi \geq 0$ and $\omega > 0$ such that
\begin{displaymath}
	a(u,u) + \xi \|u\|_H^2 \geq \omega \|u\|_V^2 \qquad \text{for all } u\in V;
\end{displaymath}
if we can take $\xi=0$, then we say $a$ is \emph{coercive}.
\end{itemize}
If $a$ has these properties, then $\sqrt{a(u,u) + \xi \|u\|_H^2}$ defines an equivalent norm on $V$.

\begin{example}
If $V=H=\C^d$ and $A \in \C^{d\times d}$ is any Hermitian matrix (i.e. $A^T=\overline{A}$), then
\begin{displaymath}
	a(u,v) := u^T Av, \qquad u,v \in \C^d,
\end{displaymath}
defines a (trivially bounded) Hermitian form on $\C^d \times \C^d$; in fact, there is a one-to-one correspondence between (Hermitian) forms $a$ and matrices $A$.
\end{example}

\begin{definition}
Let $a: V \times V \to \C$ be a bounded Hermitian form. The operator $A: D(A) \subset H \to H$ associated with $a$ is defined by:
\begin{displaymath}
\begin{aligned}
	D(A) &:= \{u \in V: \text{there exists } f \in H \text{ such that } a(u,v) = \langle f,v\rangle_H \text{ for all } v \in V\},\\
	Au &:= f.
\end{aligned}
\end{displaymath}
\end{definition}
That is, $A$ is given by the rule $\langle Au,v\rangle_H = a(u,v)$ for all $v \in V$, where $u \in D(A)$ iff there exists a vector $Au \in H$ with this property.

\begin{theorem}[Spectral theorem]
\label{thm:spectral}
Let $a$ be a bounded, $H$-elliptic Hermitian form. Then $A$ is a self-adjoint operator, $D(A)$ is dense in $H$, and $A$ has compact resolvent. (That is, for any $\lambda \in \C$ such that $A-\lambda I$ is invertible, $(A-\lambda I)^{-1}$ is compact, and its resolvent set, the set of all such $\lambda \in \C$, is nonempty.) In particular, its spectrum $\sigma (A) \subset \C$ is a discrete set of real eigenvalues of finite (algebraic $=$ geometric) multiplicity of the form
\begin{displaymath}
	-\xi \leq \lambda_1 \leq \lambda_2 \leq \lambda_3 \leq \ldots \to \infty.
\end{displaymath}
The corresponding eigenvectors $u_k \sim \lambda_k$ can be chosen real, and to form an orthonormal basis of $H$; the pair $(\lambda_k,u_k)$ satisfies both the strong eigenvalue equation
\begin{displaymath}
	Au_k = \lambda_k u_k
\end{displaymath}
and the weak form of the equation
\begin{displaymath}
	a(u_k,v) = \lambda_k \langle u_k,v\rangle_H \qquad \text{for all } v \in V.
\end{displaymath}
\end{theorem}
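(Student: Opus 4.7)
The plan is to reduce the claim to the spectral theorem for compact self-adjoint operators by inverting $A$ via Lax--Milgram. Replacing $a$ with $\tilde a(u,v) := a(u,v) + \xi\langle u,v\rangle_H$ shifts the associated operator to $A + \xi I$ and hence its spectrum by $\xi$, so I may assume from the outset that $a$ is coercive: $a(u,u) \geq \omega\|u\|_V^2$ on $V$. In that case, for each $f \in H$ the antilinear functional $v \mapsto \langle f,v\rangle_H$ is bounded on $V$ (because the embedding $V \hookrightarrow H$ is continuous), so the Lax--Milgram theorem produces a unique $u =: Sf \in V$ with $a(Sf,v) = \langle f,v\rangle_H$ for every $v \in V$, and the map $S : H \to V$ is bounded. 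By the very definition of $A$, this says $Sf \in D(A)$ with $ASf = f$, so $S$ is a right inverse of $A$ and, since $a$ is also coercive enough to force $A$ injective, in fact $S = A^{-1}$.

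Composing $S$ with the compact embedding $V \hookrightarrow H$ yields a compact operator $T : H \to H$ which is exactly $A^{-1}$ regarded as an endomorphism of $H$. Using the Hermitian property of $a$, for $f,g \in H$ with $u := Tf$, $w := Tg$ I compute
\[
    \langle Tf,g\rangle_H = \langle u,g\rangle_H = \overline{a(w,u)} = a(u,w) = \langle f,Tg\rangle_H,
\]
so $T$ is self-adjoint; coercivity moreover gives $\langle Tf,f\rangle_H = a(u,u) > 0$ whenever $u \neq 0$, so $T$ is positive and in particular injective.

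The spectral theorem for compact self-adjoint operators now yields an orthonormal basis $(u_k)$ of $H$ consisting of eigenvectors of $T$, with eigenvalues $\mu_k > 0$ of finite multiplicity accumulating only at $0$. Setting $\lambda_k := \mu_k^{-1}$, each $u_k$ lies in $D(A) = T(H)$ with $Au_k = \lambda_k u_k$, and $\lambda_k \to \infty$; reintroducing the shift gives $\lambda_k \geq -\xi$. Density of $D(A)$ in $H$ is immediate since $D(A)$ contains the span of the $u_k$. The weak eigenvalue equation is read off directly from the definition of $A$, namely $a(u_k,v) = \langle Au_k,v\rangle_H = \lambda_k\langle u_k,v\rangle_H$ for all $v \in V$. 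That the unbounded operator $A$ is actually self-adjoint (not merely symmetric) follows from the standard fact that the inverse of an injective, bounded, self-adjoint operator on a Hilbert space is self-adjoint.

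It remains to explain why the eigenvectors may be chosen real. This uses the implicit real structure on $H$ (complex conjugation $u \mapsto \overline u$), under which the Hermitian form satisfies $a(\overline u,\overline v) = \overline{a(u,v)}$: then $A\overline{u_k} = \lambda_k \overline{u_k}$ since $\lambda_k$ is real, and one can replace $u_k$ by a basis of its finite-dimensional eigenspace consisting of real and imaginary parts. The step I expect to require the most care is not the functional-analytic skeleton — Lax--Milgram plus the compact self-adjoint spectral theorem make that almost automatic — but rather the bookkeeping between the unbounded $A$ on $D(A) \subset H$ and the bounded $T$ on $H$, specifically the deduction that $A = T^{-1}$ is self-adjoint on the correct domain, which is where the precise form-to-operator formalism set up just before the theorem is essential.
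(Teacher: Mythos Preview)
The paper does not actually provide a proof of this theorem; it is stated in Section~\ref{sec:forms} as standard background (the introduction explicitly notes that the first two chapters are ``introductory in nature'' and that this material is ``standard/well known/folklore''). So there is nothing to compare against.

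Your argument is the standard one and is essentially correct: reduce to the coercive case by shifting, invert via Lax--Milgram, compose with the compact embedding to get a compact self-adjoint positive operator $T = A^{-1}$ on $H$, and invoke the spectral theorem for compact self-adjoint operators. Two small points worth tightening. First, the claim that the eigenvectors ``can be chosen real'' is not a consequence of the abstract hypotheses as stated; it genuinely requires a conjugation on $H$ preserving $V$ and satisfying $a(\overline u,\overline v)=\overline{a(u,v)}$, which you correctly flag as an ``implicit real structure'' but which should be stated as an additional assumption (it holds in all the concrete $L^2$ settings the paper cares about). Second, your deduction that $A$ is self-adjoint is fine but terse; the cleanest route is to note that $T$ is bounded, self-adjoint, and injective with dense range (range contains all the $u_k$), so its inverse $A = T^{-1}$ is self-adjoint on $D(A) = \operatorname{ran} T$ by the standard result you cite. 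Your own diagnosis that this bookkeeping between $A$ and $T$ is the only place needing care is accurate.
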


In particular, $A$ can be diagonalised: if
\begin{displaymath}
	v = \sum_{k=1}^\infty \langle v,u_k\rangle_H u_k,
\end{displaymath}
then $v \in D(A)$ iff $\sum_{k=1}^\infty \lambda_k^2 \langle v,u_k\rangle_H^2 < \infty$, and in this case
\begin{displaymath}
	Av = \sum_{k=1}^\infty \lambda_k \langle v,u_k\rangle_H u_k.
\end{displaymath}

\begin{theorem}[Courant--Fischer/min-max and max-min characterisation of the eigenvalues]
\label{thm:courant-fischer}
Under the assumptions of Theorem~\ref{thm:spectral},
\begin{subequations}
\begin{align}
	\lambda_1 &= \min_{0 \neq u \in V} \frac{a(u,u)}{\|u\|_H^2} = \min_{0 \neq u \in D(A)} \frac{\langle Au,u\rangle_H}{\|u\|_H^2}\label{eq:l1} \\
	\lambda_k &=\min_{u \perp u_1,\ldots,u_{k-1}} \frac{a(u,u)}{\|u\|_H^2}
	= \min_{\substack{U \subset V \subspace \\ \dim U = k}}\,\,\max_{0 \neq u \in U} \frac{a(u,u)}{\|u\|_H^2}\label{eq:lk-minmax}\\
	&=\max_{\substack{W \subset V \subspace \\ \codim W = k-1}}\,\, \max_{0 \neq u \in W^\perp} \frac{a(u,u)}{\|u\|_H^2}.\label{eq:lk-maxmin}
\end{align}
\label{eq:courant-fischer}
\end{subequations}
Equality is attained in equation \eqref{eq:lk-minmax} if $U=\spa\{u_1,\ldots,u_k\}$ and in equation \eqref{eq:lk-maxmin} if $W = (\spa \{u_1,\ldots,u_{k-1}\})^\perp$.
\end{theorem}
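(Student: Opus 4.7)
The plan is to use the diagonalization provided by Theorem~\ref{thm:spectral} to reduce every statement in \eqref{eq:courant-fischer} to a computation in the eigenbasis $\{u_k\}$. The starting observation is that the same eigenvectors which form an orthonormal basis of $H$ are simultaneously orthogonal in $V$ under the equivalent norm $\|\cdot\|_a := \sqrt{a(\cdot,\cdot) + \xi\|\cdot\|_H^2}$, since the weak form of the eigenvalue equation gives $\langle u_j,u_k\rangle_a = a(u_j,u_k) + \xi\langle u_j,u_k\rangle_H = (\lambda_j+\xi)\delta_{jk}$. Completeness of $\{u_k\}$ in $V$ is extracted from this together with $H$-ellipticity: a $V$-vector $v$ orthogonal in $V$ to every $u_k$ is, by the same identity, also $H$-orthogonal to every $u_k$ (using $\lambda_k+\xi>0$), hence zero. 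Writing $c_k := \langle v,u_k\rangle_H$, one obtains the Parseval-type identities
\begin{displaymath}
    \|v\|_H^2 = \sum_k |c_k|^2, \qquad a(v,v) = \sum_k \lambda_k|c_k|^2, \qquad v \in V.
\end{displaymath}

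Given this reduction, \eqref{eq:l1} is immediate: the quotient $a(v,v)/\|v\|_H^2$ is a weighted average of the $\lambda_k$, and is therefore bounded below by $\lambda_1$ with equality at $v=u_1$; the operator-theoretic variant follows since $a(v,v)=\langle Av,v\rangle_H$ on $D(A)$ and the finite partial sums of the expansion lie in $D(A)$. The orthogonal-complement version in \eqref{eq:lk-minmax} is obtained identically: imposing $v \perp u_1,\ldots,u_{k-1}$ kills the first $k-1$ coefficients, so $a(v,v) \geq \lambda_k\|v\|_H^2$, with equality at $v=u_k$.

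The min-max and max-min halves then follow by a standard two-sided argument combining an explicit extremal subspace with a dimension count. For the min-max in \eqref{eq:lk-minmax}, the choice $U=\spa\{u_1,\ldots,u_k\}$ yields a $k$-dimensional subspace on which the Rayleigh quotient is a convex combination of $\lambda_1,\ldots,\lambda_k$ and hence $\leq \lambda_k$; conversely, for any $k$-dimensional $U \subset V$, the closed codimension-$(k-1)$ subspace $\spa\{u_1,\ldots,u_{k-1}\}^\perp$ must meet $U$ nontrivially, and on a nonzero vector of the intersection the Rayleigh quotient is $\geq \lambda_k$ by the previous step. The max-min formula \eqref{eq:lk-maxmin} is proved by the dual count: at $W=\spa\{u_1,\ldots,u_{k-1}\}^\perp$ the minimum of the Rayleigh quotient over $W$ equals $\lambda_k$, while for any other closed $W$ of codimension $k-1$ the $k$-dimensional span $\spa\{u_1,\ldots,u_k\}$ meets $W$ nontrivially, and on a nonzero vector of that intersection the quotient is a convex combination of $\lambda_1,\ldots,\lambda_k$, hence $\leq \lambda_k$.

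The one step that needs genuine care, and which I would expect to be the main obstacle, is the extension of the diagonal identity $a(v,v)=\sum_k \lambda_k|c_k|^2$ from $D(A)$ to all of $V$; once this is in place, everything else is a mechanical dimension count. This extension is equivalent to the characterisation $V = \{v \in H : \sum_k(\lambda_k+\xi)|c_k|^2 < \infty\}$ via the functional calculus for the positive self-adjoint operator $A+\xi I$, and also to the completeness of $\{u_k\}$ in $V$ established above; the role of the $H$-ellipticity hypothesis is precisely to make these equivalent characterisations work.
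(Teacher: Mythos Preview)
Your proposal is correct and follows exactly the approach the paper intends: the paper does not give a proof in the text but relegates it to Exercise~1.1 with the single hint ``use the spectral theorem'', which is precisely what you do via the eigenbasis expansion and Parseval identities. Your identification of the one nontrivial step---extending $a(v,v)=\sum_k \lambda_k|c_k|^2$ from $D(A)$ to all of $V$ by establishing completeness of $\{u_k\}$ in $(V,\|\cdot\|_a)$---is well placed, and your argument for it (orthogonality in $\langle\cdot,\cdot\rangle_a$ plus $\lambda_k+\xi>0$ forcing $H$-orthogonality) is the standard and correct one.
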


We call the quotient $\frac{a(u,u)}{\|u\|_H^2}$ the \emph{Rayleigh quotient} of $u \in V$.

\subsection{The spectral theorem for the Laplacian}
\label{sec:laplacian}

Here we suppose $\Omega \subset \R^d$, $d\geq 1$ is a bounded domain (connected open set) with sufficiently smooth boundary $\partial\Omega$, and write $x = (x_1,\ldots,x_d) \in \R^d$, as well as $\nu(x) \in \mathbb{S}^{d-1}$ for the outward-pointing unit normal to $\Omega$ at $x \in \partial\Omega$. (Throughout most of this section $\Omega$ may be replaced by a more general smooth Riemannian manifold with no essential changes to the main results.) For a (twice differentiable) function $u: \Omega \to \C$, we write
\begin{equation}
\label{eq:laplacian}
	\Delta u(x) := \sum_{j=1}^d \frac{\partial^2 u}{\partial x_j^2} (x) = \trace D^2u(x)
\end{equation}
for the \emph{Laplacian} of $u$ at $x \in \Omega$. We also need the following spaces of functions (here and throughout all integrals are understood as Lebesgue integrals, and the measure is always taken to be $d$-dimensional Lebesgue measure):
\begin{displaymath}
	C_c^\infty (\Omega) = \{u \in C^\infty (\overline{\Omega}): \supp u \Subset \Omega\}
\end{displaymath}
is the set of smooth functions whose support $\supp u = \overline{\{x \in \Omega: u(x) \neq 0\}}$ is a compact subset of $\Omega$. Equipped with the norm
\begin{displaymath}
	\|u\|_{L^p (\Omega)} \equiv \|u\|_p := \begin{cases} \left(\int_\Omega |u|^p\,\textrm{d}x\right)^{1/p}, \qquad & 1\leq p <\infty,\\
	\esssup_{x\in\Omega} |u(x)|,\qquad & p=\infty,\end{cases}
\end{displaymath}
the space
\begin{displaymath}
	L^p(\Omega) = \{u : \Omega \to \C \text{ measurable}: \|u\|_p < \infty\}
\end{displaymath}
is a Banach space for all $1\leq p\leq \infty$, and a Hilbert space for $p=2$ with respect to the corresponding inner product
\begin{displaymath}
	\langle f,g\rangle = \int_\Omega f\overline{g}\,\textrm{d}x,
\end{displaymath}
which will play the role of the space $H$ from Section~\ref{sec:forms}.

To define our $V$, we need to introduce Sobolev spaces, i.e. spaces of weakly differentiable functions. (At an intuitive level it is often sufficient to imagine these functions as being piecewise-smooth functions, which may have ``kinks'', and to know that (1) all basic theorems from vector calculus continue to hold for them, and (2) that they form a complete space with respect to the natural norm of the space. In particular, we can apply the spectral theorem to suitable operators defined on them.)

\begin{definition}
\label{def:weak-derivative}
The locally integrable function $g_j : \Omega \to \C$ is a \emph{weak} (or \emph{distributional}) partial derivative of the locally integrable function $u$ if it satisfies the integration by parts formula
\begin{equation}
\label{eq:weak-derivative-1}
	\int_\Omega g_j \phi\,\textrm{d}x = -\int_\Omega u \frac{\partial \phi}{\partial x_j}\,\textrm{d}x
\end{equation}
for all $\phi \in C_c^\infty (\Omega)$. In this case we write $\frac{\partial u}{\partial x_j} := g_j$, and
\begin{displaymath}
	\nabla u = \left(\frac{\partial u}{\partial x_1},\ldots,\frac{\partial u}{\partial x_d}\right)^T
\end{displaymath}
if all weak partial derivatives $\frac{\partial u}{\partial x_j}$ exist, $j=1,\ldots,d$.
\end{definition}
One can show that weak derivatives, if they exist, are unique up to a set of measure zero.

Higher order weak partial derivatives are defined accordingly: for any multi-index $\alpha =~(\alpha_1,\ldots,\alpha_d) \in \N_0^d$, of order $|\alpha| = \alpha_1 + \ldots + \alpha_d$, $\frac{\partial^\alpha u}{\partial x^\alpha} = \frac{\partial^{|\alpha|}u}{\partial x_1^{\alpha_1}\ldots \partial x_d^{\alpha_d}}$ is defined to be the locally integrable function $g_\alpha$, if one exists, such that
\begin{displaymath}
	\int_\Omega g_\alpha \phi\,\textrm{d}x = (-1)^{|\alpha|}\int_\Omega u \frac{\partial^\alpha \phi}{\partial x^\alpha}\,\textrm{d}x
\end{displaymath}
for all $\phi \in C_c^\infty (\Omega)$. In particular, we may define the Laplacian \eqref{eq:laplacian} of a function $u$ in the weak sense, as a sum of weak second-order derivatives.

\begin{definition}[Sobolev spaces]
For $k\geq 0$ and $1\leq p\leq \infty$, we define
%\begin{multline*}
%	W^{k,p}(\Omega) = \{u \in L^p(\Omega): \text{ all weak derivatives of $u$ up to order $k$ exist}\\ \text{and belong to $L^p(\Omega)$}\}
%\end{multline*}
\begin{displaymath}
	W^{k,p}(\Omega) = \{u \in L^p(\Omega): \text{all weak derivatives of $u$ up to order $k$ exist and are in $L^p(\Omega)$}\}
\end{displaymath}
as well as the corresponding norm
\begin{displaymath}
	\|u\|_{W^{k,p}(\Omega)} := \left(\|u\|_p^p + \sum_{1\leq |\alpha|\leq k} \left\|\frac{\partial^\alpha u}{\partial x^\alpha}\right\|_p^p\right)^{1/p}
	= \left(\int_\Omega |u|^p\,\textrm{d}x + \sum_{1\leq|\alpha|\leq k} \int_\Omega \left|\frac{\partial^\alpha u}{\partial x^\alpha}\right|^p\textrm{d}x
	\right)^{1/p}.
\end{displaymath}
Finally, we set
\begin{displaymath}
	W^{k,p}_0 (\Omega) := \overline{C_c^\infty (\Omega)}^{\|\cdot\|_{W^{k,p}(\Omega)}}
\end{displaymath}
to be the subspace of $W^{k,p} (\Omega)$ consisting of all functions which can be approximated in the $W^{k,p}$-norm by smooth functions of compact support. When $p=2$, we write $H^k(\Omega)$ in place of $W^{k,2}(\Omega)$, and $H^k_0 (\Omega)$ in place of $W^{k,2}_0(\Omega)$.
\end{definition}
(When $k=0$, we recover $W^{0,p}(\Omega) = W^{0,p}_0 (\Omega) = L^p(\Omega)$.) We regard $W^{k,p}_0 (\Omega)$ as the set of $W^{k,p}$-functions which vanish up to order $k-1$ on $\partial\Omega$ (this may be made more precise using the notion of trace operators, see below).

\begin{theorem}
Let $\Omega \subset \R^d$ be a bounded domain with sufficiently smooth boundary. For all $k \geq 0$ and $1\leq p\leq \infty$, $W^{k,p}(\Omega)$ is a Banach space, and $C^\infty (\overline{\Omega})$ is dense in $W^{k,p}(\Omega)$. Moreover, $W^{k,p}_0(\Omega)$ is a closed subspace (and thus also a Banach space). When $k=2$, $H^k(\Omega)$ and $H^k_0 (\Omega)$ are Hilbert spaces with respect to the canonical inner product
\begin{displaymath}
	\langle u,v\rangle_{H^k(\Omega)} = \langle u,v\rangle_{L^2(\Omega)} + \sum_{1\leq |\alpha|\leq k} \left\langle \frac{\partial^\alpha u}
	{\partial x^\alpha}, \frac{\partial^\alpha v}{\partial x^\alpha}\right\rangle_{L^2(\Omega)}
\end{displaymath}
for $u,v \in H^k(\Omega)$ or $H^k_0 (\Omega)$, as appropriate.
\end{theorem}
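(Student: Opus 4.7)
The plan is to treat the four assertions in turn, with the genuine work lying in the density of $C^\infty(\overline{\Omega})$ in $W^{k,p}(\Omega)$.

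First I would establish completeness of $W^{k,p}(\Omega)$ by a standard diagonal argument. If $(u_n) \subset W^{k,p}(\Omega)$ is Cauchy, then for each multi-index $\alpha$ with $|\alpha| \leq k$ the sequence $(\partial^\alpha u_n)$ is Cauchy in $L^p(\Omega)$, and since $L^p(\Omega)$ is complete, there exist $g_\alpha \in L^p(\Omega)$ with $\partial^\alpha u_n \to g_\alpha$ in $L^p$; set $u := g_0$. The nontrivial point is that $g_\alpha$ is the weak $\alpha$-derivative of $u$. For any $\phi \in C_c^\infty(\Omega)$, I would pass to the limit in the defining identity
\begin{displaymath}
    \int_\Omega (\partial^\alpha u_n)\,\phi\,\textrm{d}x = (-1)^{|\alpha|}\int_\Omega u_n\, \frac{\partial^\alpha \phi}{\partial x^\alpha}\,\textrm{d}x,
\end{displaymath}
using that integration against a fixed $\phi$ is a continuous linear functional on $L^p(\Omega)$ (by Hölder); this yields the required integration by parts formula for $u$ and $g_\alpha$, so $u \in W^{k,p}(\Omega)$ and $u_n \to u$ in $W^{k,p}(\Omega)$.

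The closedness of $W^{k,p}_0(\Omega)$ is immediate from its definition as a $\|\cdot\|_{W^{k,p}}$-closure inside a Banach space, and hence it is itself Banach. The Hilbert space structure for $p=2$ is a routine check: the bilinear form $\langle\cdot,\cdot\rangle_{H^k(\Omega)}$ is sesquilinear, Hermitian, and positive-definite (non-degeneracy uses that $\|u\|_{H^k(\Omega)} = 0$ forces $u = 0$ in $L^2$), and evidently induces precisely the norm $\|\cdot\|_{W^{k,2}(\Omega)}$, so completeness in the norm is equivalent to completeness as an inner product space. The same argument applies to $H^k_0(\Omega)$ as a closed subspace.

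The hard part is the density statement, which is essentially the Meyers--Serrin theorem together with a boundary approximation result requiring smoothness of $\partial\Omega$. My approach would have two ingredients. The interior ingredient is Friedrichs mollification: fix a standard mollifier $\rho_\varepsilon$, and on any compactly contained subdomain $\Omega' \Subset \Omega$ one shows that $u * \rho_\varepsilon \in C^\infty$ and $\partial^\alpha(u*\rho_\varepsilon) = (\partial^\alpha u)*\rho_\varepsilon$ converges to $\partial^\alpha u$ in $L^p(\Omega')$ as $\varepsilon\to 0$. The boundary ingredient, where the regularity of $\partial\Omega$ is used, is local: by covering $\partial\Omega$ with finitely many charts in which $\Omega$ is mapped to a half-space, one can locally translate $u$ slightly in the inward normal direction before mollifying, so that the mollified functions are well-defined and smooth up to the (straightened) boundary and still converge to $u$ in $W^{k,p}$. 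A smooth partition of unity subordinate to an interior set and the boundary charts then glues these local smooth approximations into a global element of $C^\infty(\overline{\Omega})$ arbitrarily close to $u$ in $W^{k,p}(\Omega)$. The chief technical obstacle is precisely this boundary step: verifying that the chart-translation-mollification procedure commutes sufficiently well with weak differentiation, and that the approximations in different charts patch together without destroying $W^{k,p}$-convergence.
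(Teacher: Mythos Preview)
The paper does not actually prove this theorem; it is stated as standard background material in the introductory chapter on Laplacians on domains, with the understanding that the content is ``standard/well known/folklore'' and can be found in references such as Brezis. Your sketch is correct and follows exactly the classical route (completeness via passing to the limit in the weak-derivative identity, closedness by definition, and Meyers--Serrin plus boundary charts for density), so there is nothing substantive to compare.

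One small caveat worth flagging: your mollification argument for density tacitly requires $p<\infty$, since convolution with a mollifier need not converge in $L^\infty$. In fact $C^\infty(\overline{\Omega})$ is \emph{not} dense in $W^{k,\infty}(\Omega)$ for $k\geq 1$ (consider $|x|$ on an interval), so the density clause as stated in the theorem should really be read with $p<\infty$; this is a minor imprecision in the paper's statement rather than a flaw in your argument.
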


Our $V$ will be either $H^1(\Omega)$ or $H^1_0(\Omega)$; for future reference and comparison, we state explicitly what the inner product is:
\begin{equation}
\label{eq:inner-product-h1omega}
	\langle u,v\rangle_{H^1(\Omega)} = \int_\Omega u\,\overline{v}\,\textrm{d}x + \int_\Omega \nabla u\cdot\overline{\nabla v}\,\textrm{d}x
\end{equation}
for all $u,v \in H^1(\Omega)$ (or $H^1_0(\Omega)$). Note that in the case of $H^1_0(\Omega)$, the alternative expression $\int_\Omega \nabla u\cdot\overline{\nabla v}\,\textrm{d}x$ already defines an inner product, as follows from \emph{Friedrichs' inequality}. We will not need this.

If $u \in H^1(\Omega)$, even though $u$ is \emph{a priori} only defined almost everywhere, we can still give meaning to $u|_{\partial\Omega} \in L^2(\partial\Omega)$ via the \emph{trace theorem}: there exists a unique continuous linear operator $\trace : H^1(\Omega) \to L^2(\partial\Omega)$ such that if $u \in H^1(\Omega) \cap C(\overline{\Omega})$, then $\trace u = u|_{\partial\Omega}$ is just the restriction $u|_{\partial\Omega}$ of $u$ to $\partial\Omega$. Since this operator is bounded, we have the following \emph{trace inequality}: there exists a constant $C=C(\Omega) > 0$ such that
\begin{equation}
\label{eq:trace-omega}
	\|\trace u\|_{L^2(\partial\Omega)} \leq C \|u\|_{H^1(\Omega)}
\end{equation}
for all $u \in H^1(\Omega)$. If $\Omega$ has sufficiently smooth boundary, then the function $u$ is in $H^1_0 (\Omega)$ if and only if $u \in H^1(\Omega)$ and $\trace u = 0$ in $L^2(\partial\Omega)$. Loosely, the trace allows us to act as if $u$ had well-defined values on $\partial\Omega$ even though $\partial\Omega$ has measure zero; for this reason, often we just write $u$ in place of $\trace u$.

\begin{definition}
\label{def:delta-u}
Let $u \in H^1(\Omega)$. We say that $u$ admits a Laplacian $\Delta u$ in the weak sense if all weak first-order partial derivatives of $u$ exist, and if there exists a measurable function $f =: \Delta u$ such that the following Green's identity holds:
\begin{equation}
\label{eq:laplacian-def}
	\int_\Omega f \phi \,\textrm{d}x = -\int_\Omega \nabla u \cdot \nabla \phi\,\textrm{d}x \left(= \int_\Omega u\Delta \phi\,\textrm{d}x\right)
\end{equation}
for all $\phi \in C_c^\infty (\Omega)$ (and hence all $\phi \in H^1_0 (\Omega)$, by density). In this case we always write $\Delta u$ in place of $f$.
\end{definition}

Observe that if $u$ is a smooth function, then, by the divergence theorem,
\begin{displaymath}
	\int_\Omega \Delta u \phi + \nabla u \cdot \nabla \phi\,\textrm{d}x = \int_\Omega \divergence (\phi \nabla u)\,\textrm{d}x = 0
\end{displaymath}
for all $\phi \in C_c^\infty (\Omega)$, since $\phi = 0$ in a neighbourhood of the boundary. This identity motivates our definition: the Laplacian is the function $f$, if one exists, such that \eqref{eq:laplacian-def} holds. Our definition is also consistent with \eqref{eq:weak-derivative-1} since it can be obtained by applying \eqref{eq:weak-derivative-1} to each of the functions $\frac{\partial u}{\partial x_j}$ in place of $u$, and summing over $j$.

%If $u \in H^1(\Omega)$ admits a Laplacian $\Delta u$ in the weak sense and $\Delta u \in L^2(\Omega)$, then $\Delta u$ satisfies a weak version of Green's identities, in particular:

%\begin{lemma}
%\label{lem:delta-u}
%Let $u \in H^1(\Omega)$ and suppose $\Delta u \in L^2(\Omega)$ exists in the weak sense. Then
%\begin{equation}
%\label{eq:delta-u}
%	\int_\Omega \nabla u \cdot \nabla \phi + \Delta u \phi\,{\rm{d}}x = 0
%\end{equation}
%for all $\phi \in C_c^\infty (\Omega)$ (and hence for all $\phi \in H^1_0 (\Omega)$, by density of $C_c^\infty(\Omega)$ in $H^1_0(\Omega)$). Moreover, if there exists some $f \in L^2(\Omega)$ satisfying the Green's identity \eqref{eq:delta-u}, i.e.
%\begin{displaymath}
%	\int_\Omega \nabla u \cdot \nabla \phi + f\phi\,{\rm{d}}x = 0
%\end{displaymath}
%for all $\phi \in C_c^\infty (\Omega)$, then necessarily $f = \Delta u$ in the weak sense.
%\end{lemma}

If $u \in H^1(\Omega)$ admits a Laplacian $\Delta u$ in the weak sense and $\Delta u \in L^2(\Omega)$, then we can also give meaning to the \emph{outer normal derivative} $\frac{\partial u}{\partial\nu}$ via a weak version of the divergence theorem.

\begin{definition}
Let $u \in H^1(\Omega)$ such that $\Delta u \in L^2(\Omega)$. We say that $f \in L^2(\partial\Omega)$ is a (in fact, \emph{the}) outer normal derivative of $u$ at $\partial\Omega$, and write $f = \frac{\partial u}{\partial \nu}$, if
\begin{equation}
\label{eq:outer-derivative-omega}
	\int_\Omega \nabla u \cdot \nabla v + v\Delta u \,\textrm{d}x = \int_{\partial\Omega} fv\,\textrm{d}s
\end{equation}
for all $v \in H^1({\Omega})$, where in the boundary integrand $v$ is really the trace $\trace v$ of $v \in H^1(\Omega)$.
\end{definition}
Note that if $u \in C^2(\overline{\Omega})$, then \eqref{eq:outer-derivative-omega} is an immediate consequence of the divergence theorem, since
\begin{displaymath}
	\int_\Omega \nabla u \cdot \nabla v + v\Delta u \,\textrm{d}x = \int_\Omega \divergence (v\nabla u)\,\textrm{d}x;
\end{displaymath}
for less regular $u$, \eqref{eq:outer-derivative-omega} is taken as a \emph{definition}.

\begin{theorem}[Embedding theorems, small selection]
\label{thm:embedding-omega}
Let $\Omega \subset \R^d$ be a bounded domain with sufficiently smooth boundary, and let $0\leq j \leq k$ and $1\leq q \leq p \leq \infty$. Then $W^{k,p}(\Omega)$ embeds continuously and densely in $W^{j,q} (\Omega)$, and $W^{k,p}_0 (\Omega)$ embeds continuously and densely in $W^{j,q}_0 (\Omega)$. The embeddings are compact if $k > j$ or if $p > q$.

In particular, $H^1(\Omega)$ and $H^1_0 (\Omega)$ are compactly and densely embedded in $L^2(\Omega)$.
\end{theorem}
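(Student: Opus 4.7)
The plan is to handle continuity, density, and compactness in turn. For the continuous embedding, the main observation is that on a bounded domain, Hölder's inequality with conjugate exponents $p/q$ and $p/(p-q)$ gives
\begin{displaymath}
    \|f\|_q \leq |\Omega|^{1/q - 1/p} \|f\|_p
\end{displaymath}
for every $f \in L^p(\Omega)$ and $1 \leq q \leq p \leq \infty$. Applying this to each weak derivative $\frac{\partial^\alpha u}{\partial x^\alpha}$ of order $|\alpha| \leq j \leq k$ (which exists and lies in $L^p(\Omega)$ since $u \in W^{k,p}(\Omega)$) and summing yields $\|u\|_{W^{j,q}(\Omega)} \leq C \|u\|_{W^{k,p}(\Omega)}$ with $C$ depending only on $\Omega$, $p$, $q$, $j$. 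For density I would invoke the preceding theorem: $C^\infty(\overline{\Omega})$ is dense in $W^{j,q}(\Omega)$ and is contained in $W^{k,p}(\Omega)$ (a smooth function on $\overline{\Omega}$ has all derivatives bounded on the bounded set $\Omega$), so the image of the embedding contains a dense set. The zero-boundary case is the same \emph{mutatis mutandis}: by definition $C_c^\infty(\Omega) \subset W^{k,p}_0(\Omega)$ is dense in $W^{j,q}_0(\Omega)$, and the continuous embedding just proved sends any $u \in W^{k,p}_0(\Omega)$, approximated in $W^{k,p}$ by test functions, into $W^{j,q}_0(\Omega)$.

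For compactness, the substantive content is the Rellich–Kondrachov theorem, which I would first establish in the representative case $k=1$, $j=0$, $p=q=2$ (the ``in particular'' conclusion, and the case actually needed for the spectral theorem). Given a bounded sequence $(u_n) \subset H^1(\Omega)$, the plan is to verify the Fréchet–Kolmogorov precompactness criterion in $L^2$. After extending each $u_n$ to $\tilde{u}_n \in H^1(\R^d)$ supported in a fixed bounded neighbourhood of $\overline{\Omega}$, with $\|\tilde{u}_n\|_{H^1(\R^d)}$ uniformly controlled by $\|u_n\|_{H^1(\Omega)}$, the inequality
\begin{displaymath}
    \|\tau_h \tilde{u}_n - \tilde{u}_n\|_{L^2(\R^d)} \leq |h|\,\|\nabla \tilde{u}_n\|_{L^2(\R^d)}
\end{displaymath}
(first for smooth functions via the fundamental theorem of calculus along the segment $t \mapsto x+th$, then by density) combined with the obvious tightness of supports yields precompactness of $(u_n)$ in $L^2(\Omega)$. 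The general case $k > j$ reduces to this one by iteration on the order of derivatives, and the Hölder step from the first paragraph bridges the remaining gap to $L^q$ when $q < p$.

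The main obstacle is the construction of the uniformly bounded extension operator $H^1(\Omega) \to H^1(\R^d)$, which is where the regularity of $\partial\Omega$ is genuinely used: the standard argument proceeds via a finite partition of unity subordinate to a boundary cover, local straightening of $\partial\Omega$ in a $C^1$-chart, and reflection across the hyperplane $\{x_d = 0\}$. In the zero-boundary case one can bypass this entirely, since extension by zero of a function in $H^1_0(\Omega)$ already lies in $H^1(\R^d)$ with the same norm; hence the compact embedding $H^1_0(\Omega) \hookrightarrow L^2(\Omega)$ in fact holds without any regularity hypothesis on $\partial\Omega$, a fact worth flagging for applications to Dirichlet spectral problems in the sequel.
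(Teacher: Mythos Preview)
The paper does not actually prove this theorem; it is stated without proof as standard background material (the author explicitly flags in the introduction that Chapter~\ref{chapter:domains} collects ``standard/well known/folklore'' results, with \cite{brezis} and \cite{arendturban} as implicit references). Your sketch is the standard textbook route---H\"older for continuity, density of smooth functions from the preceding theorem, and Rellich--Kondrachov via the Fr\'echet--Kolmogorov criterion combined with a bounded extension operator---and is correct for what it covers. There is nothing to compare against on the paper's side.

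One point worth flagging: you only establish compactness when $k>j$, then compose with the continuous H\"older embedding to lower $q$. You do not treat the case $k=j$, $p>q$ asserted in the statement, and you are right not to: that case is false. For example, $L^\infty(0,1)\hookrightarrow L^1(0,1)$ is not compact, since $\sin(n\pi x)$ is bounded in $L^\infty$ but $\|\sin(n\pi x)\|_{L^1}=2/\pi$ for all $n$ while $\sin(n\pi x)\rightharpoonup 0$ weakly, ruling out any $L^1$-convergent subsequence. So the ``or if $p>q$'' clause in the theorem appears to be a slip; fortunately only the ``in particular'' conclusion $H^1(\Omega)\hookrightarrow\hookrightarrow L^2(\Omega)$ is used downstream (for Theorem~\ref{thm:spectral-omega}), and your argument handles that completely, including the useful observation that the $H^1_0$ case needs no boundary regularity.
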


To define Laplace-type operators on $\Omega$ (plus suitable boundary conditions on $\partial\Omega$), we use the form approach of Section~\ref{sec:forms}: we define a form
\begin{equation}
\label{eq:aq-omega}
	a(u,v) = \int_\Omega \nabla u\cdot\overline{\nabla v}\,\textrm{d}x,
\end{equation}
which makes sense whenever $u,v \in H^1(\Omega)$.

\begin{lemma}
\label{lem:aq-omega}
Let $\Omega \subset \R^d$ be a bounded Lipschitz domain. The form $a$ is a bounded Hermitian form which is $L^2$-elliptic on $H^1(\Omega) \times H^1(\Omega)$, and coercive on $H^1_0 (\Omega) \times H^1_0 (\Omega)$.
\end{lemma}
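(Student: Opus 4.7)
The plan is to verify the four properties separately (Hermitian, bounded, $L^2$-elliptic on $H^1$, coercive on $H^1_0$), with the only non-trivial input being Friedrichs' inequality on $H^1_0(\Omega)$, which is flagged in the text just before the statement.

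First, for the Hermitian and bounded properties, I would work directly from the definition. Sesquilinearity in the first argument and conjugate-linearity in the second are immediate from the linearity of the weak gradient and of the Lebesgue integral. For the Hermitian symmetry, one uses that the pointwise inner product on $\C^d$ satisfies $\nabla u \cdot \overline{\nabla v} = \overline{\nabla v \cdot \overline{\nabla u}}$ and brings the conjugate outside the integral. Boundedness is Cauchy--Schwarz applied pointwise and then in $L^2$:
\begin{displaymath}
    |a(u,v)| \leq \int_\Omega |\nabla u|\,|\nabla v|\,\textrm{d}x \leq \|\nabla u\|_{L^2(\Omega)} \|\nabla v\|_{L^2(\Omega)} \leq \|u\|_{H^1(\Omega)} \|v\|_{H^1(\Omega)},
\end{displaymath}
so one can take $M = 1$.

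Next, for $L^2$-ellipticity on $H^1(\Omega)$, the inequality is actually trivial from the definition \eqref{eq:inner-product-h1omega} of the $H^1$ norm: choosing $\xi = \omega = 1$,
\begin{displaymath}
    a(u,u) + \|u\|_{L^2(\Omega)}^2 = \int_\Omega |\nabla u|^2\,\textrm{d}x + \int_\Omega |u|^2\,\textrm{d}x = \|u\|_{H^1(\Omega)}^2,
\end{displaymath}
which is even an equality. No assumption on $\Omega$ beyond what makes $H^1(\Omega)$ a Hilbert space is needed here.

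The only real content is the coercivity on $H^1_0(\Omega)$, which reduces to Friedrichs' inequality: since $\Omega$ is bounded, there exists $C = C(\Omega) > 0$ such that $\|u\|_{L^2(\Omega)}^2 \leq C \|\nabla u\|_{L^2(\Omega)}^2$ for all $u \in H^1_0(\Omega)$. Granting this, one concludes
\begin{displaymath}
    \|u\|_{H^1(\Omega)}^2 = \|u\|_{L^2(\Omega)}^2 + \|\nabla u\|_{L^2(\Omega)}^2 \leq (C+1)\,a(u,u),
\end{displaymath}
giving coercivity with $\omega = 1/(C+1)$. The main obstacle is thus Friedrichs itself, but it is not proved here; the standard route is to fit $\Omega$ inside a slab $\{a < x_1 < b\}$, approximate $u \in H^1_0(\Omega)$ by $C_c^\infty(\Omega)$ functions (by definition of $H^1_0$), write $u(x) = \int_a^{x_1} \partial_1 u(t, x_2, \ldots, x_d)\,\textrm{d}t$, and apply Cauchy--Schwarz in the $x_1$ variable followed by Fubini. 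Crucially, this argument does \emph{not} require any regularity of $\partial\Omega$, which is why the hypothesis in the lemma is only that $\Omega$ be a bounded Lipschitz domain (and in fact the coercivity part holds on any bounded open set).
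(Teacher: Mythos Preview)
Your proof is correct and is exactly what the paper intends: the lemma is left as Exercise~1.3, and the surrounding text explicitly identifies the coercivity of $a$ on $H^1_0(\Omega)$ with Friedrichs' inequality (see the remarks after \eqref{eq:inner-product-h1omega} and after Theorem~\ref{thm:spectral-omega}). The verification of the Hermitian, bounded, and $L^2$-elliptic properties is indeed trivial in the way you describe, and your reduction of coercivity to Friedrichs' inequality, together with the sketch of its standard slab-and-fundamental-theorem-of-calculus proof, is the expected argument.
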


We denote by $-\Delta^N: D(-\Delta^N) \subset L^2(\Omega) \to L^2(\Omega)$ the operator associated with $a$ on $H^1(\Omega)$, which we call the \emph{Neumann Laplacian} (or \emph{Laplacian with Neumann boundary conditions}), and by $-\Delta^D : D(-\Delta^D) \subset L^2(\Omega) \to L^2(\Omega)$ the operator associated with $a$ on $H^1_0(\Omega)$, which we call the \emph{Dirichlet Laplacian} (or \emph{Laplacian with Dirichlet boundary conditions}).

%When $q=0$, we speak of the \emph{Neumann} and \emph{Dirichlet Laplacians}, respectively.

\begin{theorem}
\label{thm:laplacians-omega}
\begin{enumerate}
\item[\emph{\textbf{(a)}}] The operator $-\Delta^D$ is given by
\begin{displaymath}
\begin{aligned}
	D(-\Delta^D) &= \big\{u \in H^1_0(\Omega): \Delta u \in L^2(\Omega) \text{ (as a weak derivative)}\big\}.\\
	-\Delta^D u &= -\Delta u.
\end{aligned}
\end{displaymath}
\item[\emph{\textbf{(b)}}] The operator $-\Delta^N$ is given by
\begin{displaymath}
\begin{aligned}
	D(-\Delta^N) &= \left\{u \in H^1_0(\Omega): \Delta u \in L^2(\Omega),\, \frac{\partial u}{\partial\nu} \in L^2(\partial\Omega)
	\text{ (as weak derivatives)}\right\}.\\
	-\Delta^N u &= -\Delta u.
\end{aligned}
\end{displaymath}
\end{enumerate}
\end{theorem}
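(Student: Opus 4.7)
The plan is to unfold the definition of the form-associated operator in each case, using the density of test functions together with the weak versions of the Laplacian (Definition~\ref{def:delta-u}) and the outer normal derivative (equation~\eqref{eq:outer-derivative-omega}) already introduced above.

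For part~(a), I would start from $u \in D(-\Delta^D)$: by definition $u \in H^1_0(\Omega)$ and there exists $f = -\Delta^D u \in L^2(\Omega)$ with
$$\int_\Omega \nabla u \cdot \overline{\nabla v}\,\textrm{d}x = \int_\Omega f \overline{v}\,\textrm{d}x \qquad \text{for every } v \in H^1_0(\Omega).$$
Specialising to $v = \phi \in C_c^\infty(\Omega) \subset H^1_0(\Omega)$ and comparing with \eqref{eq:laplacian-def} immediately identifies $-\Delta u = f \in L^2(\Omega)$ in the weak sense. For the converse, starting from $u \in H^1_0(\Omega)$ with $\Delta u \in L^2(\Omega)$, Definition~\ref{def:delta-u} gives $\int_\Omega \nabla u \cdot \nabla \phi\,\textrm{d}x = -\int_\Omega \Delta u\, \phi\,\textrm{d}x$ for all $\phi \in C_c^\infty(\Omega)$; both sides are continuous in $\phi$ with respect to $\|\cdot\|_{H^1(\Omega)}$, so the identity extends by density of $C_c^\infty(\Omega)$ in $H^1_0(\Omega)$ to all $v \in H^1_0(\Omega)$, placing $u$ in $D(-\Delta^D)$ with $-\Delta^D u = -\Delta u$.

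Part~(b) proceeds in the same spirit, now with $V = H^1(\Omega)$. The first step, namely deducing $\Delta u \in L^2(\Omega)$ from $u \in D(-\Delta^N)$, is identical, since $C_c^\infty(\Omega) \subset H^1(\Omega)$ as well. The new ingredient is the boundary condition: testing against arbitrary $v \in H^1(\Omega)$ and rewriting the form identity as
$$\int_\Omega \bigl(\nabla u \cdot \overline{\nabla v} + \Delta u\, \overline{v}\bigr)\,\textrm{d}x = 0 = \int_{\partial\Omega} 0\cdot \overline{v}\,\textrm{d}s$$
triggers the definition~\eqref{eq:outer-derivative-omega} of the outer normal derivative, which both asserts that $\partial u/\partial \nu$ exists as an element of $L^2(\partial\Omega)$ and equals zero there. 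Conversely, substituting $\partial u/\partial \nu = 0$ back into~\eqref{eq:outer-derivative-omega} recovers the form identity on all of $H^1(\Omega)$, so $u \in D(-\Delta^N)$ with $-\Delta^N u = -\Delta u$.

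The main technical hurdle is making sure the two density/continuity passages are really legitimate: on the one hand, that restricting $v$ to $C_c^\infty(\Omega)$ in the form identity recovers exactly Definition~\ref{def:delta-u} rather than a weaker distributional statement (this is clear since $C_c^\infty(\Omega) \subset H^1_0(\Omega) \subset H^1(\Omega)$); on the other, that the extension from $C_c^\infty$ to the full $H^1_0$ in the Dirichlet case, and the interpretation of the boundary integrand in the Neumann case, are justified. Both points rely only on the boundedness of $a$ established in Lemma~\ref{lem:aq-omega} and the trace inequality~\eqref{eq:trace-omega} recorded earlier, so no new analytical input is needed beyond the standing smoothness hypothesis on $\partial\Omega$.
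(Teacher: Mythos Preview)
Your argument is correct and is exactly the approach the paper has in mind: the text says only that ``the proof involves combining the definition of the associated operator with the properties of $H^1(\Omega)$-functions listed above'' and defers the details to Exercises~1.4 and~1.5, which amount precisely to the unfolding-and-density computation you carry out. In particular, your handling of the Neumann boundary condition via the weak definition~\eqref{eq:outer-derivative-omega} (showing $\partial u/\partial\nu$ exists and vanishes, not merely lies in $L^2(\partial\Omega)$) is the right reading of what is intended in part~(b), despite the minor misprints in the displayed domain there.
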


The proof involves combining the definition of the associated operator with the properties of $H^1(\Omega)$-functions listed above. See Exercises 1.4 and 1.5. The spectral properties of $-\Delta^D$ and $-\Delta^N$ -- the foundational properties of most interest to us -- follow immediately from Lemma~\ref{lem:aq-omega}, Theorem~\ref{thm:embedding-omega} and Theorem~\ref{thm:spectral} (the spectral theorem).

\begin{theorem}[Spectral theorem for the Dirichlet and Neumann Laplacians]
\label{thm:spectral-omega}
Let $\Omega \subset \R^d$ be a bounded Lipschitz domain. The operators $-\Delta^D$ and $-\Delta^N$ are densely defined self-adjoint operators with compact resolvent. In particular, their respective spectra take the form
\begin{displaymath}
\begin{aligned}
	-\Delta^D: \qquad & \lambda_1 \leq \lambda_2 \leq \lambda_3 \leq \ldots \to \infty,\\
	-\Delta^N: \qquad & \mu_1 \leq \mu_2 \leq \mu_3 \leq \ldots \to \infty.
\end{aligned}
\end{displaymath}
The corresponding eigenfunctions $\psi_k \sim \lambda_k$ and $\varphi_k \sim \mu_k$ may, respectively, be chosen to form a real orthonormal basis of $L^2(\Omega)$. The eigenvalues admit the min-max and max-min characterisations of Theorem~\ref{thm:courant-fischer}, with Rayleigh quotient
\begin{displaymath}
	\frac{a(u,u)}{\|u\|_2^2} = \frac{\int_\Omega |\nabla u|^2\,\textrm{d}x}{\int_\Omega |u|^2\,\textrm{d}x}.
\end{displaymath}
\end{theorem}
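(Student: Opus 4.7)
The plan is to apply the abstract spectral theorem (Theorem~\ref{thm:spectral}) and Courant--Fischer characterisation (Theorem~\ref{thm:courant-fischer}) to the form $a$ of \eqref{eq:aq-omega}, in the two settings $(V,H)=(H^1_0(\Omega),L^2(\Omega))$ and $(V,H)=(H^1(\Omega),L^2(\Omega))$, and then invoke Theorem~\ref{thm:laplacians-omega} to identify the resulting abstract operators with $-\Delta^D$ and $-\Delta^N$.

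First, I would verify that the hypotheses of Theorem~\ref{thm:spectral} are met in both cases. The requirement that $V$ be compactly and densely embedded in $H$ is exactly the content of the second assertion of Theorem~\ref{thm:embedding-omega}, for both $H^1(\Omega)$ and $H^1_0(\Omega)$ in $L^2(\Omega)$ (using that $\Omega$ is Lipschitz). That $a$ is a bounded Hermitian form which is $L^2$-elliptic on $H^1(\Omega)\times H^1(\Omega)$, and in fact coercive on $H^1_0(\Omega)\times H^1_0(\Omega)$, is precisely Lemma~\ref{lem:aq-omega}. Thus all the hypotheses of Theorem~\ref{thm:spectral} are satisfied in either case.

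With these hypotheses in hand, Theorem~\ref{thm:spectral} immediately yields that the associated operators are densely defined, self-adjoint, and have compact resolvent, with purely discrete spectrum consisting of eigenvalues of finite multiplicity accumulating only at $+\infty$. Moreover, by the same theorem the corresponding eigenfunctions can be chosen real and to form an orthonormal basis of $L^2(\Omega)$. To finish, one identifies the operator abstractly associated with $a$ on $H^1_0(\Omega)$ as $-\Delta^D$, and that associated with $a$ on $H^1(\Omega)$ as $-\Delta^N$; this identification is exactly the content of Theorem~\ref{thm:laplacians-omega}. Finally, Theorem~\ref{thm:courant-fischer} directly yields the min-max and max-min formulae, and the explicit form of the Rayleigh quotient is just $a(u,u)/\|u\|_H^2 = \int_\Omega |\nabla u|^2\,\textrm{d}x\,/\int_\Omega|u|^2\,\textrm{d}x$ by the definition \eqref{eq:aq-omega} of $a$ and the choice $H=L^2(\Omega)$.

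The only genuine subtlety — the main obstacle, to the extent there is one — lies in the identification step via Theorem~\ref{thm:laplacians-omega}: one must check that the abstract definition $\langle Au,v\rangle_{L^2}=a(u,v)$ for all $v\in V$ really corresponds, via integration by parts, to the weak Laplacian with the claimed boundary conditions (Dirichlet encoded in $V=H^1_0(\Omega)$, Neumann encoded in the vanishing of $\partial_\nu u$ through \eqref{eq:outer-derivative-omega} when $V=H^1(\Omega)$). This is, however, already accomplished by Theorem~\ref{thm:laplacians-omega} (and the exercises referenced there), so no further argument is needed here; all remaining claims of the statement follow by routine assembly of the cited results.
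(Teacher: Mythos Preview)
Your proposal is correct and follows exactly the approach the paper takes: the paper does not give a formal proof of this theorem but simply remarks, immediately before the statement, that it ``follow[s] immediately from Lemma~\ref{lem:aq-omega}, Theorem~\ref{thm:embedding-omega} and Theorem~\ref{thm:spectral}''. One small simplification: the identification step via Theorem~\ref{thm:laplacians-omega} is not actually needed, since in the paper $-\Delta^D$ and $-\Delta^N$ are \emph{defined} as the operators associated with $a$ on $H^1_0(\Omega)$ and $H^1(\Omega)$ respectively (Theorem~\ref{thm:laplacians-omega} merely describes their domains explicitly).
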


In particular, since the eigenvalues are real and the eigenfunctions may be chosen real, we can, and will, restrict ourselves to real-valued functions and real spaces throughout.

Actually, one can say more: in each case the first eigenvalue is \emph{simple}, $\lambda_1 < \lambda_2$ and $\mu_1 < \mu_2$, and $\psi_1$ and $\varphi_1$ may be chosen strictly positive everywhere in $\Omega$. (This is a consequence of a more general theory, Perron--Frobenius theory.) It is a consequence of \emph{Friedrichs' inequality} (equivalently, the coercivity of $a$ on $H^1_0 (\Omega)$) that $\lambda_1 > 0$, while $\mu_1 = 0$ with the corresponding eigenfunctions, the solutions of
\begin{displaymath}
\begin{aligned}
	-\Delta u &= 0 \cdot u \qquad &&\text{in } \Omega,\\ \frac{\partial u}{\partial\nu} &=0 \qquad &&\text{on } \partial\Omega,
\end{aligned}
\end{displaymath}
being just the constant functions.

When $d=1$ and $\Omega=(0,1)$,
\begin{displaymath}
	\lambda_k = \pi^2k^2 \qquad \text{with} \qquad \psi_k (x) = c\sin (\pi kx),
\end{displaymath}
$c \neq 0$, and similarly
\begin{displaymath}
	\mu_k = \pi^2(k-1)^2, \qquad \varphi_k(x) = c \cos (\pi(k-1)x).
\end{displaymath}

\subsection{Application: the heat equation}
\label{sec:heat}

Let $f \in L^2(\Omega)$ be an initial condition, representing an initial distribution of heat, or some chemical concentration, or a population distribution, or some other quantity whose evolution will be governed by a diffusion process in a homogeneous medium, at time $t=0$. Denote by $u=u(t,x)$ the corresponding concentration of heat/the chemical/population etc.\ at $x \in \Omega$ at time $t>0$, then $u$ will be a solution of the heat equation
\begin{displaymath}
\begin{aligned}
	\frac{\partial u}{\partial t} &= \Delta u \qquad &&\text{in } (0,\infty) \times \Omega,\\
	u &= 0 \text{ (or $\tfrac{\partial u}{\partial\nu}=0$)} \qquad &&\text{on } (0,\infty) \times \partial\Omega,\\
	u(0,x) &=f
\end{aligned}
\end{displaymath}
(where the Laplacian $\Delta u = \sum_{j=1}^d \frac{\partial^2 u}{\partial x^2}$ is taken with respect to the spatial variables). Formally, the solution is given by an abstract Fourier series: since $\psi_k$, $\varphi_k$ form orthonormal bases of $L^2(\Omega)$, we may write
\begin{displaymath}
\begin{aligned}
	f(x) &= \sum_{k=1}^\infty \langle f,\psi_k\rangle_{L^2(\Omega)} \psi_k (x)\\
	&= \sum_{k=1}^\infty \langle f,\varphi_k\rangle_{L^2(\Omega)} \varphi_k (x)
\end{aligned}
\end{displaymath}
almost everywhere in $\Omega$. In the Dirichlet case the solution $u$ is given by the series
\begin{displaymath}
	u(t,x) = e^{\Delta^D} f(x) = \sum_{k=1}^\infty e^{-\lambda_k t} \langle f,\psi_k\rangle_{L^2(\Omega)} \psi_k(x),
\end{displaymath}
and in the Neumann case by
\begin{displaymath}
	u(t,x) = e^{\Delta^N} f(x) = \sum_{k=1}^\infty e^{-\mu_k t} \langle f,\varphi_k\rangle_{L^2(\Omega)} \varphi_k(x).
\end{displaymath}
In either case, the solution may alternatively be represented using the respective \emph{heat kernel}; for example, in the Dirichlet case, this reads
\begin{displaymath}
	k_t (x,y) = \sum_{k=1}^\infty e^{-\lambda_k t} \psi_k(x)\psi_k(y),
\end{displaymath}
and in this case
\begin{displaymath}
	u(t,x) = \int_\Omega k_t(x,y)f(y)\,\textrm{d}y = k_t(x,\cdot) \ast f.
\end{displaymath}
Loosely, $\lambda_1$ and $\mu_2$ determine the rate of convergence/heat loss from $\Omega$. In the Dirichlet case, where there is ``perfect cooling'' at the boundary, since $\lambda_1 > 0$,
\begin{displaymath}
	\|u_t\|_2 \leq e^{-\lambda_1 t}\|f\|_2 \to 0
\end{displaymath}
exponentially as $t\to \infty$, with exponent (at least) $\lambda_1$. In the Neumann case, which represents perfect insulation, i.e.\ no heat loss through the boundary, since $\mu_1 = 0$ and $\mu_2 > 0$,
\begin{displaymath}
	\sum_{k=1}^\infty e^{-\mu_k t}\langle f,\varphi_k\rangle \to \langle f,\varphi_1\rangle\varphi_1 = \frac{1}{|\Omega|} \int_\Omega f\,\textrm{d}x:
\end{displaymath}
the equilibrium profile is the constant function, i.e. a uniform distribution of heat, and the rate of convergence is essentially determined by $\mu_2$.

Only for very few domains $\Omega$ can the eigenvalues and eigenfunctions be computed explicitly; and if we introduce more complicated operators such as Schr{\"o}dinger operators of the form $-\Delta+q$, where $q$ is a suitable function on $\Omega$, or more involved boundary conditions such as Robin boundary conditions, then there may be no domains $\Omega$ at all for which the eigenvalues are explicitly known. See Exercises 1.4 and 1.5.
\bigskip

\textbf{Goals:}
\begin{itemize}
\item Construct analogues of these operators on metric graphs (``quantum graphs'');
\item Obtain a spectral theorem for them;
\item Study how the spectrum is related to the geometry of the graph.
\end{itemize}

\section{Quantum graphs}
\label{chapter:quantum-graphs}

\subsection{Metric graphs}
\label{sec:metric-graphs}

Idea: a graph $\Gamma = (V,E)$ consists of:
\begin{enumerate}
\item A \emph{vertex set} $V$ of points, usually assumed countable, $V=\{v_1,v_2,\ldots\}$;
\item An \emph{edge set} $E$, also generally taken countable, $E=\{e_1,e_2,\ldots\}$.
\end{enumerate}
Each edge is $e$ associated \emph{(incident)} with two vertices $v_1,v_2$; we will write $e \sim v_1 v_2$. Two vertices are \emph{adjacent} if there is an edge incident with both. The \emph{degree} of a vertex is the number of edges incident with it.\footnote{Technical note: if $e$ is a \emph{loop} at $v$, $e\sim v v$, which in general we permit, then $e$ counts twice to the degree of $v$.} Here we will always assume:

\begin{assumption}
$V$ and $E$ are \emph{finite} sets: there exist $m,n \in \N$ such that $\# V = n$, $\# E = m$.
\end{assumption}

There are various different ``kinds'' of graphs depending on the nature of the edges. Most notable is the distinction between \emph{discrete graphs} and \emph{metric graphs}. We will almost always only consider metric graphs.\medskip

\textbf{Discrete graphs:} $E$ constitutes a set of adjacency (binary) relations between vertices: $v_1 \sim v_2$ if there exists an edge $e$ such that $e \sim v_1v_2$. The edges have no meaning beyond this. We define functions (really vectors) $f : V \to \C$, thus there is only one function space, isomorphic to $\C^n$ (where the $k$th row/column corresponds to the $k$th vertex).

Operators on these function spaces correspond to matrices; for example, the \emph{adjacency matrix} $A\in \C^{n \times n}$ is defined by $A_{ij} = 1$ if there is an edge running from $v_i$ to $v_j$, and $0$ otherwise. There are also (at least) two commonly studied matrices known as (discrete) Laplacians; perhaps the most common is defined as $L=D-A$, where $A$ is the adjacency matrix and $D$ is the diagonal matrix whose $(i,i)$-entry equal to the degree of vertex $v_i$ (see Definition~\ref{def:on-the-edges} and note that for discrete graphs this definition is the same).

Discrete graphs, and their Laplacians, have, in general, been studied far more intensively over the years than metric graphs; many of the topics of these notes have much more well-developed pendants on discrete graphs. One of the best known references, and a natural starting point, is the book \cite{chung}.\medskip

\textbf{Metric graphs:} Each edge is identified with an interval, which are ``glued together'' at their endpoints in accordance with the incidence relations.
\begin{assumption}
All edges have finite length: each edge $e$ may be identified with a compact interval $[0,\ell_e]$ for some $\ell_e > 0$.
\end{assumption}

We can also distinguish between \emph{directed} and \emph{undirected} graphs. Roughly, $\Gamma$ is directed if the edge has an orientation, that is, the order of the vertices in the incidence relation is important: $e \sim v_1v_2$ is different from $e \sim v_2 v_1$. It is undirected if there is no distinction and edges impose a symmetric relation on the vertices.
%if each edge has a direction; for the mapping $\phi$ we distinguish between $(v_1,v_2)$ and $(v_2,v_1)$. Our graphs will be undirected, whereby the edges impose a symmetric relation on the vertices; $\phi$ may really be treated as a mapping from $E$ to $\{\text{subsets of $V$ with two elements}\}$, i.e. $\phi(e) = \{v_1,v_2\}$.

This distinction is more directly visible in the world of discrete graphs; for example, the adjacency matrix of the graph is symmetric if the graph is undirected and not generally symmetric if it is directed. (Another example: X, formerly Twitter, with its network of followers, defines a directed discrete graph whose vertices are the users; Facebook, with its network of ``friends'', defines an undirected discrete graph.) Our metric graphs will be undirected; in other words, the choice of the orientation of the intervals will not matter.

\emph{Formally:} (Roughly following \cite{mugnolo}.) For $k=1,\ldots,m$, take positive real numbers $\ell_1,\ldots,\ell_k > 0$ (which will be the edge lengths), and for each $k$ take an interval of length $\ell_k$, $I_k := [x_k,y_k]$, so that $y_k - x_k = \ell_k$. Set
\begin{displaymath}
	G := \bigsqcup_{k=1}^m I_k
\end{displaymath}
to be their (formal) disjoint union, as well as $S:= \{x_k,y_k\}_{k=1}^m$ to be the endpoint set, treated as a set with $2m$ elements even if some elements coincide as real numbers. Define a \emph{partition} $P = \{V_1,\ldots,V_n\}$ of $S$:
\begin{displaymath}
	V_j \neq \emptyset, \qquad V_i \cap V_j = \emptyset \text{ for } i \neq j, \qquad \bigcup_{j=1}^n V_j = S.
\end{displaymath}
Impose an equivalence relation on $G$: given $x,y \in G$,
\begin{displaymath}
	x \sim y \text{ iff } \begin{cases} x,y \in I_k \text{ for some } k \text{ and } x=y \text{ in } I_k, \text{or}\\
	x,y \in V_j \subset S \text{ for some } j=1,\ldots,n.\end{cases}
\end{displaymath}
\begin{definition}
\label{def:on-the-edges}
We define the \emph{edge set} $E = \{e_1,\ldots,e_m\}$ by
\begin{displaymath}
	e_k := I_k/\!\sim\,\, = \{ [x] : [x] \text{ has a representative belonging to } I_k\}, \qquad k=1,\ldots,m,
\end{displaymath}
and the \emph{vertex set} $V = \{v_1,\ldots,v_n\}$ by
\begin{displaymath}
	v_j := V_j/\!\sim, \qquad j=1,\ldots,n.
\end{displaymath}
We call $\Gamma = (V,E)$ a \emph{metric graph}. We say $e_k$ is \emph{incident} with $v_j$ if there exists $x \in V_j \cap I_k$, and we say $v_i$ and $v_j$ are \emph{adjacent} if there exists $k$ such that, for $I_k = [x_k,y_k]$, either $x_k \in V_i$ and $y_k \in V_j$, or $x_k \in V_j$ and $y_k \in V_i$.\footnote{This assumption formally guarantees that the graph is undirected. Also, the case where the edge $e_k$  loop is covered by the possibility that $V_i = V_j$ and thus $v_i=v_j$.} % In this case the edge counts twice towards the degree of $v_i=v_j$.}
 The \emph{degree} of the vertex $v_k$, $\deg v_k$, is the cardinality of $V_k$.
\end{definition}

In practice we identify $\Gamma$ with $G/\!\sim$ and treat it as a set of points. In particular, given an edge $e_k = I_k/\!\sim$ with incident vertices $v_{k,1}$ and $v_{k,2}$, we may, and usually will, introduce canonical ``local coordinates'' of the form $e_k \simeq [0,\ell_k]$, and either ($0 \sim v_{k,1}$ and $\ell_k \sim v_{k,2})$, or ($0\sim v_{k,2}$ and $\ell_k \sim v_{k,1}$). We will not distinguish between $x \in e_k$, $x \in \Gamma$, $x \in I_k$, and $x \in [0,\ell_k]$, except where strictly notationally necessary.

We may alternatively think of the local coordinates on each edge as a ``chart'' and the collection of charts as an ``atlas'', akin to how we describe manifolds. Indeed, it is sometimes useful to think of a metric graph as a one-dimensional manifold with singularities, the vertices.

\begin{example}
\label{ex:3-star}
For $m=3$, take $I_k = [0_k,1_k]$, $k=1,2,3$, then $S=\{0_1,0_2,0_3,1_1,1_2,1_3\}$.
\begin{figure}[H]
\centering
\begin{tikzpicture}
\draw[thick,red] (-3,0) -- (-1,0);
\draw[thick,purple] (0,0) -- (2,0);
\draw[thick,blue] (3,0) -- (5,0);
\draw[fill] (-3,0) circle (1.5pt);
\draw[fill] (-1,0) circle (1.5pt);
\draw[fill] (0,0) circle (1.5pt);
\draw[fill] (2,0) circle (1.5pt);
\draw[fill] (3,0) circle (1.5pt);
\draw[fill] (5,0) circle (1.5pt);
\node at (-3,0) [anchor=north] {${\color{red} 0_1}$};
\node at (-1,0) [anchor=north] {${\color{red} 1_1}$};
\node at (0,0) [anchor=north] {${\color{purple} 0_2}$};
\node at (2,0) [anchor=north] {${\color{purple} 1_2}$};
\node at (3,0) [anchor=north] {${\color{blue} 0_3}$};
\node at (5,0) [anchor=north] {${\color{blue} 1_3}$};
\node at (-2,0) [anchor=south] {$I_1$};
\node at (1,0) [anchor=south] {$I_2$};
\node at (4,0) [anchor=south] {$I_3$};
\end{tikzpicture}
\end{figure}

If we set $V_1 = \{0_1\}$, $V_2 = \{0_2\}$, $V_3 = \{1_1\}$, $V_4 = \{1_1,1_2,0_3\}$, then we generate a \emph{star graph} on three edges: see Figure~\ref{fig:3-star}.
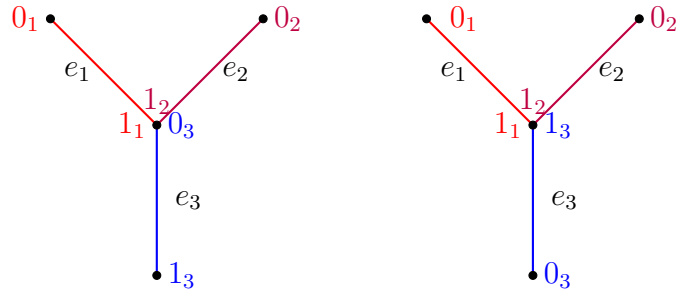
\begin{figure}[H]
\centering
\begin{tikzpicture}%[scale=1.25]
\draw[thick,red] (-1.414,1.414) -- (0,0);
\draw[thick,purple] (1.414,1.414) -- (0,0);
\draw[thick,blue] (0,0) -- (0,-2);
\draw[fill] (-1.414,1.414) circle (1.5pt);
\draw[fill] (1.414,1.414) circle (1.5pt);
\draw[fill] (0,0) circle (1.5pt);
\draw[fill] (0,-2) circle (1.5pt);
\node at (-1.414,1.414) [anchor=east] {${\color{red} 0_1}$};
\node at (0,0) [anchor=east] {${\color{red} 1_1}$};
\node at (1.414,1.414) [anchor=west] {${\color{purple} 0_2}$};
\node at (0,0) [anchor=south] {${\color{purple} 1_2}$};
\node at (0,0) [anchor=west] {${\color{blue} 0_3}$};
\node at (0,-2) [anchor=west] {${\color{blue} 1_3}$};
\node at (-0.727,0.707) [anchor=east] {$e_1$};
\node at (0.727,0.707) [anchor=west] {$e_2$};
\node at (0.1,-1) [anchor=west] {$e_3$};
\draw[thick,red] (3.586,1.414) -- (5,0);
\draw[thick,purple] (6.414,1.414) -- (5,0);
\draw[thick,blue] (5,0) -- (5,-2);
\draw[fill] (3.586,1.414) circle (1.5pt);
\draw[fill] (6.414,1.414) circle (1.5pt);
\draw[fill] (5,0) circle (1.5pt);
\draw[fill] (5,-2) circle (1.5pt);
\node at (4.414,1.414) [anchor=east] {${\color{red} 0_1}$};
\node at (5,0) [anchor=east] {${\color{red} 1_1}$};
\node at (6.414,1.414) [anchor=west] {${\color{purple} 0_2}$};
\node at (5,0) [anchor=south] {${\color{purple} 1_2}$};
\node at (5,0) [anchor=west] {${\color{blue} 1_3}$};
\node at (5,-2) [anchor=west] {${\color{blue} 0_3}$};
\node at (4.273,0.707) [anchor=east] {$e_1$};
\node at (5.727,0.707) [anchor=west] {$e_2$};
\node at (5.1,-1) [anchor=west] {$e_3$};
\end{tikzpicture}
\caption{The $3$-star graph of Example~\ref{ex:3-star} with the first-mentioned choice of coordinates (left) and the alternative coordinates (right).}
\label{fig:3-star}
\end{figure}

Choosing, for example, $V_1 = \{0_1\}$, $V_2 = \{0_2\}$, $V_3 = \{0_1\}$, $V_4 = \{1_1,1_2,1_3\}$ would lead to the same graph, but the orientation of $e_3$ in the chosen ``local coordinates'' would be reversed.
\end{example}

We can define a metric on $\Gamma$ in the obvious way: choosing, for each edge $e_k \in E$, a coordinate representation, we define the natural topology on $\Gamma$ induced by the Euclidean metric on each edge:

\begin{definition}
\label{def:topology}
A set $\Omega \subset \Gamma$ is \emph{open} iff $\Omega \cap e_k$ is relatively open in $e_k$ for all $k=1,\ldots,m$.
\end{definition}

In particular, if some vertex $v \in \Omega$ and $e \sim v$, where $e$ has local coordinates $[0,\ell]$ with $0 \sim v$, then, since $0 \in \Omega$, there must exist some relatively open interval $[0,a)$, $a \in (0,\ell]$, contained in $\Omega \cap e$.

It is a very elementary but rather tedious exercise to show that this topology is independent of the choice of local coordinates on each edge (including the orientation of the edges), as well as of the labelling of the edges.

This topology is canonically metrisable.

\begin{definition}
Let $\Gamma$ be a metric graph.
\begin{enumerate}
\item[\textbf{(a)}] A \emph{path} $P$ in $\Gamma$ is (the image in $\Gamma$ of) a map $\phi: [0,1] \to \Gamma$ which is injective on $[0,1)$ and continuous with respect to the topology induced by the open sets of Definition~\ref{def:topology}. If $\phi(0)=x$ and $\phi(1)=y$, then we say $P$ is a path from $x$ to $y$.
\item[\textbf{(b)}] The path $P$ is a \emph{closed path} if $\phi(0)=\phi(1)$.
\end{enumerate}
\end{definition}

In general, if $\phi(0)=x$, $\phi(1)=y$, then there exists a (possibly empty) sequence of distinct vertices $v_1,v_2,\ldots,v_k$ lying on $P$ such that $v_1 \sim v_2 \sim \ldots \sim v_k$, $x$ is on an edge $e_x$ incident with $v_1$, and $y$ is on an edge $e_y$ incident with $v_k$. We will write $e_j$ for the edge $\sim v_j v_{j+1}$.
\begin{figure}[H]
\centering
\begin{tikzpicture}
\draw[thick] (-2,0) -- (0,0);
\draw[thick] (0,0) -- (1.5,1);
\draw[thick] (0,0) -- (1.5,-1);
\draw[thick] (1.5,1) -- (1.5,-1);
\draw[thick] (1.5,1) -- (3,0);
\draw[thick] (1.5,-1) -- (3,0);
\draw[line width=0.6mm,blue,->] (-0.75,0) -- (0,0);
\draw[line width=0.6mm,blue,->] (0,0) -- (1.5,1);
\draw[line width=0.6mm,blue,->] (1.5,1) -- (1.5,-1);
\draw[line width=0.6mm,blue,->] (1.5,-1) -- (3,0);
\draw[fill] (-2,0) circle (1.5pt);
\draw[fill] (-0.75,0) circle (1.5pt);
\draw[fill] (0,0) circle (1.5pt);
\draw[fill] (1.5,1) circle (1.5pt);
\draw[fill] (1.5,-1) circle (1.5pt);
\draw[fill] (3,0) circle (1.5pt);
\node at (-0.75,0) [anchor=south] {$x$};
\node at (0,0) [anchor=north] {$v_1$};
\node at (1.5,1) [anchor=south] {$v_2$};
\node at (1.5,-1) [anchor=north] {$v_3$};
\node at (3,0) [anchor=west] {$y$};
\node at (-1,0) [anchor=north] {$e_x$};
\node at (2.25,-0.5) [anchor=north west] {$e_y$};
\node at (0.85,0.4) [anchor=south east] {$e_1$};
\node at (1.55,0) [anchor=east] {$e_2$};
\end{tikzpicture}
\end{figure}

\begin{definition}
\label{def:distance}
Let $\Gamma$ be a metric graph.
\begin{enumerate}
\item[\textbf{(a)}] The \emph{length} of an edge $e \simeq [0,\ell]$ is $\ell$, we will also write $|e| := \ell$.
\item[\textbf{(b)}] The \emph{local} or \emph{edgewise distance function} on an edge $e \simeq [0,\ell]$ is defined by
\begin{displaymath}
	\dist_e (x,y) = |x-y| \qquad \text{in the interval } [0,\ell], \text{ if } x,y \in e.
\end{displaymath}
\item[\textbf{(c)}] The \emph{length} of a path $P$ in $\Gamma$, with the notation described above, is
\begin{displaymath}
	|P| := \dist_{e_x} (x,v_1) + \sum_{j=1}^{k-1} |e_j| + \dist_{e_y} (v_k,y).
\end{displaymath}
\item[\textbf{(d)}] The \emph{distance} between $x,y \in \Gamma$ is defined as
\begin{displaymath}
	\dist (x,y) \equiv \dist_\Gamma (x,y) = \inf \{|P| : P \text{ is a path from $x$ to $y$}\} \in [0,\infty].
\end{displaymath}
\item[\textbf{(e)}] $\Gamma$ is \emph{connected} if $\dist (x,y)< \infty$ for all $x,y \in \Gamma$.
\end{enumerate}
\end{definition}

\begin{theorem}
The above definitions, in particular the edge lengths and the distance function, do not depend on the choice of local coordinates, nor the labelling of the edges. In particular, $\dist$ does define a metric on $\Gamma$, which induces the same topology as in Definition~\ref{def:topology}. For this topology, $\Gamma$ is a complete metric space, which is compact under our assumption that $E$ is finite and each edge has finite length.
\end{theorem}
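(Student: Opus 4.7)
The plan is to address the four assertions in the natural logical order: coordinate invariance, metric axioms, topological equivalence, and finally the metric-space properties of completeness and compactness.

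First I would dispose of the independence from local coordinates and edge labellings. The only freedom in choosing local coordinates on an edge $e$ is either the identity or the reflection $x \mapsto \ell_e - x$ of the interval $[0,\ell_e]$; both are isometries of $\R$, so the length $|e| = \ell_e$ and the edgewise distance $\dist_e(x,y) = |x-y|$ are manifestly unchanged. The length of a path, being a finite sum of edgewise distances and full edge lengths, is then invariant under any relabelling of the edges or reorientation, and the infimum defining $\dist_\Gamma(x,y)$ inherits this invariance.

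Next I would verify that $\dist_\Gamma$ is genuinely a metric. Non-negativity and symmetry are immediate (the reverse of a path is again a path of the same length), and the triangle inequality follows by concatenating a path from $x$ to $z$ with one from $z$ to $y$; concatenation yields a continuous map $[0,1] \to \Gamma$ that may fail to be injective on $[0,1)$, but then a sub-path from $x$ to $y$ of at most the same total length exists. The subtle point is positivity: if $x \neq y$, then either $x$ and $y$ lie in the same closed edge $e$, in which case any path from $x$ to $y$ within $e$ has length $|x-y|>0$ and any path leaving $e$ must first traverse a positive edgewise distance to reach a vertex of $e$; or $x$ and $y$ lie in different edges, and the analogous observation shows every path has length bounded below by the positive distance from $x$ (or $y$) to its nearest incident vertex. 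The main obstacle in the whole theorem lies here and in the next step, since one must manipulate paths through vertices carefully.

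Then I would show that the metric topology coincides with the one in Definition~\ref{def:topology}. For an interior point $x$ of an edge $e$, a sufficiently small metric ball $B_\varepsilon(x)$ coincides with the Euclidean open interval $(x-\varepsilon,x+\varepsilon) \subset e$, so the two topologies agree locally. For a vertex $v$, taking $\varepsilon$ less than half the length of every edge incident with $v$, I would show that $B_\varepsilon(v)$ is exactly the union over edges $e \sim v$ of the half-open intervals $[0,\varepsilon) \subset e$ (with $v$ identified to $0$); this matches the description of open neighbourhoods of $v$ in Definition~\ref{def:topology}. Conversely, every set open in Definition~\ref{def:topology} is a union of such pieces, hence open in the metric.

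Finally, compactness will follow from the observation that the formal disjoint union $G = \bigsqcup_{k=1}^m I_k$ is a finite disjoint union of compact intervals, hence compact, and $\Gamma = G/\!\sim$ carries the quotient topology, which (by the previous step) is the metric topology; the quotient map is continuous and surjective, so $\Gamma$ is compact. Completeness is then automatic, since every compact metric space is complete.
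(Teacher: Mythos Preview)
The paper does not actually give a proof of this theorem; it simply writes ``Tedious exercise.'' Your outline is a correct and complete plan for carrying out precisely that exercise, and the order you have chosen (invariance, metric axioms, topological equivalence, then compactness and completeness) is the natural one.

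A couple of small points you would want to make explicit when writing it out in full. First, in the triangle-inequality step you correctly flag that the concatenation of two paths need not be injective on $[0,1)$; the extraction of a genuine sub-path from $x$ to $y$ of no greater length is straightforward on a graph (one can, for instance, pass to the last time the concatenated curve visits $x$ and the first subsequent time it reaches $y$, and remove any loops in between), but deserves a sentence. Second, for the compactness argument you implicitly use that the topology of Definition~\ref{def:topology} \emph{is} the quotient topology on $G/\!\sim$; this is immediate once one unpacks the definition (a set is open iff its preimage in each $I_k$ is relatively open), but is worth stating so that the continuity of the quotient map is not begging the question. With those two clarifications your sketch becomes a full proof.
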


\begin{proof}
Tedious exercise.
\end{proof}

A metric graph $\Gamma$ consisting of finitely many edges of finite length is thus generally called a \emph{compact metric graph} in the literature, cf.~\cite[Definition~1.3.4]{berkolaiko}. Finally, we emphasise that we will generally treat $\Gamma$ as a metric space of points $x \in \Gamma$ rather than the pair $(V,E)$ of a vertex set and an edge set (although whenever convenient we will identify the two).

\subsection{Function spaces}
\label{sec:function-spaces}

Denote by $\mu$ Lebesgue measure on $\R$, which induces a measure on each edge of a metric graph $\Gamma$ via any choice of local coordinates. Define Lebesgue measure $\mu$ on $\Gamma$ as a direct sum of Lebesgue measure on the intervals: $\Omega \subset \Gamma$ is measurable iff $\Omega \cap e_k$ is measurable for all $k=1,\ldots,m$, and in this case
\begin{displaymath}
	\mu (\Omega) = \sum_{k=1}^m \mu (\Omega \cap e_k).
\end{displaymath}
It is another easy exercise to check that every path is measurable, and its length corresponds with its Lebesgue measure. Equipped with $\dist$ and $\mu$, $\Gamma$ is a \emph{metric measure space}.

We can introduce integrable functions accordingly: for $f: \Gamma \to \R$,
\begin{equation}
\label{eq:integral}
	\int_\Gamma f\,\textrm{d}\mu \equiv \int_\Gamma f(x)\,\textrm{d}x = \sum_{k=1}^m \int_{e_k} f(x)\,\textrm{d}x
	\simeq \sum_{k=1}^m \int_0^{\ell_k} f(x)\,\textrm{d}x
\end{equation}
in local coordinates; in particular, we identify $L^p(e_k)$ with $L^p([0,\ell_k])$. We also write $f|_e$ for the restriction of a function $f : \Gamma \to \R$ to $e$.

\begin{definition}
For $1\leq p \leq \infty$, we set
\begin{displaymath}
	L^p (\Gamma) := \{ f: \Gamma \to \C: f|_{e_k} \in L^p(e_k), \, k=1,\ldots,m\} \simeq \bigoplus_{k=1}^m L^p ([0,\ell_k]).
\end{displaymath}
\end{definition}

(If $\#E = \infty$, this definition would have to be modified.)

\begin{theorem}
The quantity
\begin{equation}
\label{eq:graph-norm}
	\|f\|_p = \begin{cases} \left(\sum_{k=1}^m \int_{e_k} |f(x)|^p\,\textrm{d}x\right)^{1/p}, \qquad & 1 \leq p < \infty,\\
	\max_{k=1,\ldots,m} \|f|_{e_k}\|_{L^\infty (e_k)} = \esssup_{x \in \Gamma} |f(x)|, \qquad & p=\infty,\end{cases}
\end{equation}
defines a norm on $L^p (\Gamma)$; equipped with this norm $L^p(\Gamma)$ is a Banach space. When $p=2$ it is a Hilbert space for
\begin{displaymath}
	\langle f,g\rangle = \int_\Gamma f\overline{g}\,\textrm{d}x \equiv \sum_{k=1}^m \int_{e_k} f(x)\overline{g(x)}\,\textrm{d}x.
\end{displaymath}
\end{theorem}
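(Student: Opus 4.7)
The plan is to leverage the identification $L^p(\Gamma) \simeq \bigoplus_{k=1}^m L^p([0,\ell_k])$ and reduce everything to the corresponding (classical) results on intervals, using only that $m$ is finite. Concretely, I would proceed in three stages: norm axioms, completeness, and the Hilbert space structure for $p=2$.

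First I would verify that \eqref{eq:graph-norm} defines a norm. Positive definiteness and absolute homogeneity are immediate from the fact that $\|\cdot\|_{L^p(e_k)}$ is a norm for each $k$ (together with the definition \eqref{eq:integral} of the integral over $\Gamma$). For the triangle inequality, when $1 \le p < \infty$ I would write
\begin{displaymath}
\|f+g\|_p = \left(\sum_{k=1}^m \|f|_{e_k} + g|_{e_k}\|_{L^p(e_k)}^p\right)^{1/p},
\end{displaymath}
apply Minkowski's inequality on each edge to bound $\|f|_{e_k}+g|_{e_k}\|_{L^p(e_k)} \le \|f|_{e_k}\|_{L^p(e_k)} + \|g|_{e_k}\|_{L^p(e_k)}$, and then invoke the finite-dimensional Minkowski inequality in $\ell^p(\{1,\ldots,m\})$ to conclude $\|f+g\|_p \le \|f\|_p + \|g\|_p$. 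The case $p = \infty$ is the standard elementary argument for essential suprema.

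Next, completeness. Given a Cauchy sequence $(f_n) \subset L^p(\Gamma)$, the bound $\|f_n|_{e_k} - f_m|_{e_k}\|_{L^p(e_k)} \le \|f_n - f_m\|_p$ (which is trivial from \eqref{eq:graph-norm}) shows that for each fixed $k$ the restricted sequence $(f_n|_{e_k})$ is Cauchy in $L^p(e_k) \simeq L^p([0,\ell_k])$. By the classical completeness of $L^p$ on an interval, there exist limits $f^{(k)} \in L^p(e_k)$. Define $f : \Gamma \to \C$ by $f|_{e_k} := f^{(k)}$ (the values on the vertex set, a finite union of points, are irrelevant since $\mu(V) = 0$). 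For $1 \le p < \infty$,
\begin{displaymath}
\|f_n - f\|_p^p = \sum_{k=1}^m \|f_n|_{e_k} - f^{(k)}\|_{L^p(e_k)}^p \longrightarrow 0
\end{displaymath}
as $n \to \infty$, because the sum is finite and each summand tends to $0$; the $p = \infty$ case is analogous with $\max$ in place of $\sum$. Hence $f \in L^p(\Gamma)$ and $f_n \to f$ in norm.

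Finally, for $p=2$, I would check directly that $\langle f, g\rangle = \sum_k \int_{e_k} f \overline{g}\, \mathrm{d}x$ inherits sesquilinearity, conjugate symmetry and positive definiteness from the corresponding properties of the $L^2(e_k)$ inner products, and that $\langle f, f\rangle = \|f\|_2^2$. Combined with completeness from the previous step this yields a Hilbert space. The main (minor) obstacle is purely bookkeeping: ensuring that measurability and the norm/inner product on $\Gamma$ genuinely decouple edgewise, which they do by the very definition of $L^p(\Gamma)$ and \eqref{eq:integral}, and that the finiteness of $E$ is used to pass from edgewise convergence to convergence in $\Gamma$ — there are no analytic subtleties beyond what is already present in the Riesz–Fischer theorem on a single interval.
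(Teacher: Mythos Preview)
Your proposal is correct; the paper states this theorem without proof, treating it as a routine consequence of the identification $L^p(\Gamma)\simeq\bigoplus_{k=1}^m L^p([0,\ell_k])$ and the classical Riesz--Fischer theorem, which is exactly the reduction you carry out. Your argument is the standard one and there is nothing to add.
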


$L^p$-spaces do not ``see'' the topology of the graph: for that we need continuous functions.

\begin{definition}
We set
\begin{displaymath}
\begin{aligned}
	C(\Gamma) := \{ f : \Gamma \to \C: & \, f|_{e_k} \in C(e_k) \text{ for all $k=1,\ldots,m$ and}\\
	&\text{whenever $x,y \in V_j$ for some vertex $v_j$, we have } f(x)=f(y)\}.
\end{aligned}
\end{displaymath}
\end{definition}

In particular: $f$ has a well-defined value $f(v_j)$ at each vertex $v_j \in V$, $j=1,\ldots,n$. Moreover:

\begin{lemma}
$f \in C(\Gamma)$ iff $f$ is continuous with respect to the metric/topology of Section~\ref{sec:metric-graphs}; in particular, if $v \sim e$ and $x_n \in e$ with $x_n \to v$ in $\Gamma$, then $f(x_n) \to f(v)$.
\end{lemma}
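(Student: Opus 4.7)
The plan is to prove the equivalence by unpacking what continuity means at each point of $\Gamma$, distinguishing between interior edge points, where the metric is locally Euclidean, and vertices, where several edge-branches meet.

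Before starting, I would establish a geometric preliminary about open balls around vertices: if $v \in V$ is a vertex and $\delta_0 > 0$ is chosen smaller than the length of every edge incident with $v$, then for any $0 < \delta < \delta_0$ the ball $B_\delta(v) := \{y \in \Gamma : \dist(y,v) < \delta\}$ is precisely the union, over edges $e_i \sim v$, of the half-open segments $\{y \in e_i : \dist_{e_i}(y,v) < \delta\}$. Indeed, any path from $v$ of length less than $\delta_0$ cannot traverse a whole edge incident with $v$ and therefore remains on such an edge. This is the only step where the path-metric structure from Definition~\ref{def:distance} needs to be used explicitly.

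For the direction $C(\Gamma) \Rightarrow$ metric continuity, let $f \in C(\Gamma)$. At an interior point $x$ of an edge $e$, there is a neighbourhood on which $\dist$ coincides with $\dist_e$ (choose a radius smaller than the distance from $x$ to the endpoints of $e$), so continuity of $f|_e$ yields continuity of $f$ at $x$. At a vertex $v$, given $\varepsilon > 0$, continuity of each restriction $f|_{e_i}$ at the endpoint corresponding to $v$ (together with the vertex-agreement condition, which guarantees that $f|_{e_i}$ takes the same value $f(v)$ there for every $i$) produces $\delta_i > 0$ such that $|f(y) - f(v)| < \varepsilon$ whenever $y \in e_i$ with $\dist_{e_i}(y,v) < \delta_i$. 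Taking $\delta := \min(\delta_0, \min_i \delta_i)$ and applying the ball description above gives $|f(y) - f(v)| < \varepsilon$ for all $y \in B_\delta(v)$. The ``in particular'' statement about sequences $x_n \to v$ along a single incident edge is the special case obtained by restricting to one branch.

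For the converse, assume $f$ is continuous with respect to $\dist$. The inequality $\dist_\Gamma(x,y) \leq \dist_e(x,y)$ for $x,y \in e$ (realised by the obvious edgewise path) shows immediately that $\dist_e$-convergence implies $\dist_\Gamma$-convergence, hence $f|_e$ is continuous on each edge. The vertex-agreement clause in the definition of $C(\Gamma)$ is automatic, because $f$ is given as a map on $\Gamma = G/\!\sim$, so points lying in the same equivalence class $V_j$ are literally the same point in $\Gamma$.

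The only step requiring genuine thought is the geometric preliminary about $B_\delta(v)$; the rest is a routine $\varepsilon$-$\delta$ exercise. One point to take care of is that the argument at a vertex $v$ implicitly uses the fact that $f|_{e_i}$ has a well-defined limit at the endpoint corresponding to $v$ equal to $f(v)$, which is precisely the content of the vertex-agreement clause (combined with continuity of each $f|_{e_i}$ up to the closed interval $[0, \ell_i]$).
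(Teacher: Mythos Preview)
Your proof is correct and carefully handles both directions, including the only nontrivial point (the description of small metric balls around a vertex). The paper does not actually give a proof of this lemma; it is stated without argument, in keeping with the surrounding material where such facts are declared to be a ``tedious exercise''. So there is nothing to compare against, but your write-up would serve perfectly well as the omitted proof.
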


\begin{theorem}
Under our assumptions on $\Gamma$ (i.e. for $\Gamma$ a compact metric graph), the quantity
\begin{displaymath}
	\|f\|_\infty = \max_{x \in \Gamma} |f(x)|
\end{displaymath}
defines a norm on $\Gamma$ with respect to which $C(\Gamma)$ is a Banach space.
\end{theorem}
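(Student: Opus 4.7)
The plan is to prove three things in sequence: first that $\|\cdot\|_\infty$ is well defined (the sup is actually a max), second that it satisfies the norm axioms, and third that $C(\Gamma)$ is complete in this norm.

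For well-definedness, I would invoke the preceding theorem: $\Gamma$ is a compact metric space under the metric $\dist$, and by the lemma just stated, any $f \in C(\Gamma)$ is continuous on $\Gamma$ with respect to this metric. Hence $|f|$ is a continuous real-valued function on a compact metric space and attains its maximum, so $\max_{x \in \Gamma}|f(x)|$ is a finite nonnegative real number. The three norm axioms are then routine: nonnegativity is immediate, $\|f\|_\infty = 0$ forces $f(x)=0$ for every $x \in \Gamma$ (not merely a.e., because $f$ is continuous), homogeneity is obvious, and the triangle inequality follows pointwise from $|f(x)+g(x)| \leq |f(x)|+|g(x)| \leq \|f\|_\infty + \|g\|_\infty$ and then taking the max over $x$.

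For completeness, let $(f_n)$ be a Cauchy sequence in $(C(\Gamma),\|\cdot\|_\infty)$. Then for every $x \in \Gamma$, $(f_n(x))$ is a Cauchy sequence in $\C$, so it converges to some value $f(x)$; the convergence $f_n \to f$ is uniform on $\Gamma$ by the standard $\varepsilon/3$ argument. I would then check that $f \in C(\Gamma)$ in two stages, mirroring the definition of $C(\Gamma)$. On each edge $e_k$, the restrictions $f_n|_{e_k}$ are continuous on the compact interval $e_k \simeq [0,\ell_k]$ and converge uniformly to $f|_{e_k}$, so $f|_{e_k}$ is continuous on $e_k$ by the classical uniform limit theorem. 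For the vertex compatibility condition: if $x, y \in V_j$ for some vertex $v_j$, then $f_n(x) = f_n(y)$ for every $n$ by hypothesis, and passing to the limit gives $f(x) = f(y)$. Hence $f \in C(\Gamma)$, and $\|f_n - f\|_\infty \to 0$ by the uniform convergence already established.

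The main potential obstacle is conceptual rather than technical: one has to be careful that ``continuity on $\Gamma$'' really means two separate things (edgewise continuity and compatibility at vertices), and that both survive the uniform limit. Both do, essentially for free, so the argument reduces to the classical fact that a uniform limit of continuous functions on a compact interval is continuous, applied edge by edge, combined with the trivial observation that a pointwise identity $f_n(x) = f_n(y)$ passes to the limit. No new ideas beyond what is already in the excerpt are needed.
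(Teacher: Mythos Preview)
Your proof is correct and complete. The paper does not actually supply a proof of this theorem; it is stated without proof as a standard/folklore fact (in keeping with the author's remark that the material in this chapter is well known and no claim to originality is made). Your argument --- compactness of $\Gamma$ to justify the max, routine verification of the norm axioms, and the uniform-limit-of-continuous-functions argument applied edgewise together with the trivial preservation of the vertex compatibility condition under pointwise limits --- is exactly the expected one, and nothing more is needed.
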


To define differential operators on $\Gamma$, we need Sobolev spaces. Here we can mostly make do with first-order Sobolev spaces in $\R$: we recall that, for an interval $I \subset \R$ and $1 \leq p \leq \infty$,
\begin{displaymath}
	W^{1,p}(I) = \{f \in L^p(I): f \text{ has a weak derivative } f' \in L^p(I)\}.
\end{displaymath}
The main challenge in introducing analogous Sobolev spaces on $\Gamma$ is determining what to do at the vertices.

\begin{lemma}[Embedding theorem]
\label{lem:embedding-1d}
Let $I \subset \R$ be a bounded interval, $1\leq p \leq \infty$. Then there exists a continuous injection
\begin{displaymath}
	W^{1,p}(I) \hookrightarrow C(\overline{I}).
\end{displaymath}
\end{lemma}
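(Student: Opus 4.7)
The plan is to show that every $f \in W^{1,p}(I)$ admits a (unique) continuous representative $\tilde f$ on $\overline I$, and that the map $f \mapsto \tilde f$ is continuous with respect to the natural norms.

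First I would define, for $I = (a,b)$ and $f \in W^{1,p}(I)$, the function
\begin{displaymath}
F(x) := \int_a^x f'(t)\,\textrm{d}t, \qquad x \in [a,b].
\end{displaymath}
Since $I$ is bounded, H\"older's inequality gives $f' \in L^1(I)$, so $F$ is well-defined and absolutely continuous (in particular continuous) on $[a,b]$.

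The key step is to show that $f - F$ is almost everywhere equal to a constant. For this, I would test against arbitrary $\phi \in C_c^\infty(I)$: Fubini's theorem yields $\int_I F\phi'\,\textrm{d}x = -\int_I f'\phi\,\textrm{d}x$, and by the definition of the weak derivative this equals $\int_I f\phi'\,\textrm{d}x$. Hence $\int_I (f-F)\phi'\,\textrm{d}x = 0$ for every test function $\phi$. The du Bois-Reymond lemma (the fundamental lemma of the calculus of variations) then gives that $f - F = c$ a.e.\ for some $c \in \C$. Setting $\tilde f := c + F$ produces a continuous function on $\overline I$ that equals $f$ a.e.; uniqueness is clear since two continuous functions that agree almost everywhere on an interval agree everywhere. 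This step is the principal obstacle: everything else is bookkeeping around it.

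It remains to quantify the embedding. From $\tilde f(x) - \tilde f(y) = \int_y^x f'(t)\,\textrm{d}t$ and H\"older,
\begin{displaymath}
|\tilde f(x) - \tilde f(y)| \leq |I|^{1 - 1/p}\|f'\|_p \qquad \text{for all } x,y \in \overline I.
\end{displaymath}
Integrating the identity $\tilde f(x) = \tilde f(y) + \int_y^x f'(t)\,\textrm{d}t$ over $y \in I$ and applying H\"older to both terms yields
\begin{displaymath}
|\tilde f(x)| \leq \tfrac{1}{|I|}\|f\|_1 + |I|^{1-1/p}\|f'\|_p \leq C(|I|,p)\,\|f\|_{W^{1,p}(I)},
\end{displaymath}
so $\|\tilde f\|_{C(\overline I)} \leq C\|f\|_{W^{1,p}(I)}$. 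This establishes both boundedness and injectivity of $f \mapsto \tilde f$: if $\tilde f \equiv 0$ then $f = 0$ a.e., i.e.\ $f = 0$ as an element of $W^{1,p}(I)$. The case $p = \infty$ is handled by the same argument, replacing the H\"older estimates with the trivial bounds $\|f'\|_1 \leq |I|\|f'\|_\infty$ and $\|f\|_1 \leq |I|\|f\|_\infty$.
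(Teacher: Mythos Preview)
Your proof is correct and is the classical argument (construct the absolutely continuous primitive of $f'$, invoke du Bois-Reymond to identify $f$ with it up to a constant, then bound the sup norm via H\"older). The paper itself does not supply a proof of this lemma: it is stated as a standard result and used as a black box, so there is no approach in the paper to compare against.
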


We may thus characterise $W^{1,p}$-functions as continuous functions (up to the choice of the correct representative) which have a weak derivative in $L^p$; in particular, $W^{1,p}$-functions have well-defined values at all points. This is a one-dimensional equivalent of the trace operator introduced in Section~\ref{sec:laplacian}.

\begin{definition}
Under our assumptions on $\Gamma$, for $1 \leq p \leq \infty$ we define
\begin{displaymath}
	W^{1,p}(\Gamma) = \{f \in C(\Gamma): f|_{e_k} \in W^{1,p}(e_k),\, k=1,\ldots,m\}.
\end{displaymath}
\end{definition}

\begin{theorem}
$W^{1,p}(\Gamma)$ is a Banach space for the canonical norm, which for $1\leq p < \infty$ is given by
\begin{displaymath}
	\|f\|_{W^{1,p}} = \left(\sum_{k=1}^m \int_{e_k} |f'(x)|^p + |f(x)|^p\,\textrm{d}x\right)^{1/p}.
\end{displaymath}
When $p=2$, $H^1(\Gamma) := W^{1,2}(\Gamma)$ is a Hilbert space with respect to the canonical inner product
\begin{displaymath}
	\langle f,g\rangle = \int_\Gamma f'\overline{g}' + f\overline{g} \,\textrm{d}x
	\equiv \sum_{k=1}^m \int_{e_k} f'(x)\overline{g'(x)} + f(x)\overline{g(x)}\,\textrm{d}x.
\end{displaymath}
\end{theorem}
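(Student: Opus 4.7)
The plan is to reduce the statement to the known completeness of $W^{1,p}(I)$ on a single interval, exploiting that $W^{1,p}(\Gamma)$ is, up to the continuity condition at the vertices, a direct sum of one-dimensional Sobolev spaces.

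First I would check that $\|\cdot\|_{W^{1,p}}$ is genuinely a norm on $W^{1,p}(\Gamma)$: positive definiteness, absolute homogeneity and the triangle inequality all follow from the fact that the expression is nothing but the standard direct-sum $\ell^p$-norm of the edgewise $W^{1,p}(e_k)$ norms, and each of those is already known to be a norm. For $p=2$, the bilinearity, conjugate symmetry and positive definiteness of the proposed inner product are inherited edgewise from the inner product on $H^1(e_k)$, and it obviously induces the norm; so the Hilbert space assertion reduces to the Banach space one.

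Next, the main step is completeness. Given a Cauchy sequence $(f_n) \subset W^{1,p}(\Gamma)$, the edgewise restrictions $(f_n|_{e_k})$ form a Cauchy sequence in $W^{1,p}(e_k)$ for each $k=1,\ldots,m$, since
\begin{displaymath}
\|f_n|_{e_k} - f_m|_{e_k}\|_{W^{1,p}(e_k)} \leq \|f_n - f_m\|_{W^{1,p}(\Gamma)}.
\end{displaymath}
By completeness of $W^{1,p}(e_k)$ on the interval $[0,\ell_k]$, there exists $g_k \in W^{1,p}(e_k)$ with $f_n|_{e_k} \to g_k$ in $W^{1,p}(e_k)$. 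The candidate limit is the function $f$ whose restriction to each $e_k$ is $g_k$; by construction, $\|f_n - f\|_{W^{1,p}(\Gamma)} \to 0$ once we know $f$ lies in $W^{1,p}(\Gamma)$.

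The hard part — the one point where the graph structure actually enters — is checking that $f \in C(\Gamma)$, i.e.\ that the $g_k$ really glue to a continuous function on $\Gamma$. Here I would invoke Lemma~\ref{lem:embedding-1d}: the convergence $f_n|_{e_k} \to g_k$ in $W^{1,p}(e_k)$ implies uniform convergence on the closed interval $\overline{e_k}$, so for every vertex $v_j$ and every endpoint $x \in V_j \cap I_k$ (in the notation of Definition~\ref{def:on-the-edges}) we have $f_n(x) \to g_k(x)$. Since $f_n \in C(\Gamma)$, the values $f_n(x)$ are the same for all representatives $x \in V_j$ and all $n$; passing to the limit, the values $g_k(x)$ agree across all edges incident with $v_j$. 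Hence $f$ is well-defined at each vertex and continuous there, so $f \in C(\Gamma)$ and therefore $f \in W^{1,p}(\Gamma)$, completing the proof.
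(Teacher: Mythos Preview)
Your argument is correct and is the standard one. The paper itself does not supply a proof of this theorem; it is stated without proof, in keeping with the author's remark that much of the foundational material in Chapter~\ref{chapter:quantum-graphs} is ``standard/well known/folklore''. Your approach---reducing to the completeness of $W^{1,p}$ on each edge interval and then using the continuous embedding $W^{1,p}(e_k)\hookrightarrow C(\overline{e_k})$ from Lemma~\ref{lem:embedding-1d} to show that the vertex continuity conditions survive in the limit---is exactly how one would expect this to be done, and there are no gaps.
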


\begin{remark}
Up to isometric isomorphism, the spaces $L^p(\Gamma)$, $C(\Gamma)$ and $W^{1,p}(\Gamma)$ are independent of the choice of local coordinates and relabelling of the edges.
\end{remark}

\begin{remark}[Dummy vertices]
\label{rem:dummy-vertices}
We call $v \in V$ a \emph{dummy vertex} if $\deg v=2$ and an \emph{essential vertex} otherwise. Suppose $\deg v = 2$, with $v \sim e_1,e_2$.
\begin{figure}[ht]
\centering
\begin{tikzpicture}
\draw[thick] (-2,0) -- (2,0);
\draw[thick,dashed] (-2,0) -- (-2.7,0.5);
\draw[thick,dashed] (-2,0) -- (-2.9,0);
\draw[thick,dashed] (-2,0) -- (-2.7,-0.5);
\draw[thick,dashed] (2,0) -- (2.7,0.5);
\draw[thick,dashed] (2,0) -- (2.9,0);
\draw[thick,dashed] (2,0) -- (2.7,-0.5);
\draw[fill] (-2,0) circle (1.5pt);
\draw[fill] (0,0) circle (1.5pt);
\draw[fill] (2,0) circle (1.5pt);
\node at (0,0) [anchor=north] {$v$};
\node at (-1.2,0) [anchor=south] {$e_1$};
\node at (1.2,0) [anchor=south] {$e_2$};
\node at (3.5,0) [anchor=west] {$\Gamma$};
\end{tikzpicture}
\end{figure}

Create a new graph $\widetilde\Gamma$ by replacing $e_1$ and $e_2$ with a single edge $e$ of length $|e_1|+|e_2|$, preserving all other incidence and adjacency relations. This ``deletes'' the dummy vertex $v$.
\begin{figure}[ht]
\centering
\begin{tikzpicture}
\draw[thick] (-2,0) -- (2,0);
\draw[thick,dashed] (-2,0) -- (-2.7,0.5);
\draw[thick,dashed] (-2,0) -- (-2.9,0);
\draw[thick,dashed] (-2,0) -- (-2.7,-0.5);
\draw[thick,dashed] (2,0) -- (2.7,0.5);
\draw[thick,dashed] (2,0) -- (2.9,0);
\draw[thick,dashed] (2,0) -- (2.7,-0.5);
\draw[fill] (-2,0) circle (1.5pt);
\draw[fill] (2,0) circle (1.5pt);
\node at (0,0) [anchor=south] {$e$};
\node at (3.5,0) [anchor=west] {$\widetilde\Gamma$};
\end{tikzpicture}
\end{figure}

This process is reversible: we may replace \emph{any} $x \in e$ with a dummy vertex, thus ``dividing'' $e$ into two edges. We speak of \emph{inserting} a dummy vertex at $x$.
\end{remark}

\begin{theorem}
There is an isometry between the graphs $\Gamma$ and $\widetilde\Gamma$, which induces an isometric isomorphism between the spaces $L^p(\Gamma)$ and $L^p(\widetilde\Gamma)$, $C(\Gamma)$ and $C(\widetilde\Gamma)$, and $W^{1,p}(\Gamma)$ and $W^{1,p}(\widetilde\Gamma)$, $1 \leq p \leq \infty$.
\end{theorem}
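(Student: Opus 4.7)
The plan is to write down the obvious map $\Phi : \Gamma \to \widetilde\Gamma$ which collapses the dummy vertex and is the identity elsewhere, show it is an isometry of metric spaces, and then obtain all three function-space isomorphisms as pullback along $\Phi$; the bulk of the work reduces to checking that this pullback respects the defining conditions of each function class.

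To construct $\Phi$, choose local coordinates so that $e_1 \simeq [0,|e_1|]$ with $|e_1| \sim v$, $e_2 \simeq [0,|e_2|]$ with $0 \sim v$, and $e \simeq [0,|e_1|+|e_2|]$ in $\widetilde\Gamma$ is obtained by concatenating these two intervals. Define $\Phi$ to be the identity on every edge other than $e_1,e_2$, the identity on $e_1 \hookrightarrow [0,|e_1|] \subset e$, and the shift $x \mapsto x+|e_1|$ on $e_2 \hookrightarrow [|e_1|,|e_1|+|e_2|] \subset e$; the preimage of the interior point $|e_1| \in e$ is precisely the two-element set of $G$ collapsed to $v$. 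To verify that $\Phi$ is distance-preserving, observe that any path in $\Gamma$ passing through $v$ maps to a path in $\widetilde\Gamma$ through $\Phi(v) \in e$ of the same length, and conversely any path in $\widetilde\Gamma$ through $\Phi(v)$ can be split there (reversing the procedure of Remark~\ref{rem:dummy-vertices}) to yield a path in $\Gamma$ of the same length. This gives $\dist_\Gamma(x,y) = \dist_{\widetilde\Gamma}(\Phi(x),\Phi(y))$ for all $x,y$.

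The $L^p$ and $C$ statements are then essentially automatic. The pullback $\widetilde f \mapsto \widetilde f \circ \Phi$ is an isometry $L^p(\widetilde\Gamma) \to L^p(\Gamma)$ because the integral decomposes edgewise and only a harmless change of variable occurs when passing between $e$ and $e_1 \cup e_2$. For continuity, $\widetilde f \in C(\widetilde\Gamma)$ is in particular continuous at the interior point $\Phi(v) \in e$, which translates exactly into the vertex condition for $\widetilde f \circ \Phi$ at $v$, namely that the one-sided limits from $e_1$ and $e_2$ coincide and agree with the value at $v$; the converse direction is the same observation in reverse, and the sup-norm is preserved since $\Phi$ is a bijection.

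The main obstacle is the $W^{1,p}$-case, for which I would invoke the standard one-dimensional gluing fact: if $h \in C([a,c])$ with $h|_{[a,b]} \in W^{1,p}(a,b)$ and $h|_{[b,c]} \in W^{1,p}(b,c)$, then $h \in W^{1,p}(a,c)$, with weak derivative equal almost everywhere to the concatenation of the two pieces. (Conversely, any $h \in W^{1,p}(a,c)$ restricts properly to both subintervals, and by Lemma~\ref{lem:embedding-1d} the two restrictions share the value $h(b)$ at the interface, so the continuity hypothesis is automatic.) Applied with $a=0$, $b=|e_1|$, $c=|e_1|+|e_2|$, this identifies the $W^{1,p}$-functions on $e$ (which satisfy no condition at the interior point $\Phi(v)$) with continuous functions on $e_1 \cup e_2$ whose restrictions to each of $e_1,e_2$ lie in $W^{1,p}$, i.e.\ precisely with the traces on $e_1 \cup e_2$ of elements of $W^{1,p}(\Gamma)$. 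The norm identity follows by applying the $L^p$-isometry established in the previous paragraph to both the function and its derivative.
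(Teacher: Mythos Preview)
Your proof is correct. The paper itself does not provide a proof of this theorem at all; it is stated without proof and treated as a routine fact (in the same spirit as the earlier theorem on the metric structure, whose proof is dismissed as a ``tedious exercise''). Your argument supplies precisely the natural details one would fill in: the explicit bijection $\Phi$, the edgewise identification for $L^p$ and $C$, and the standard one-dimensional gluing lemma for $W^{1,p}$ across the interior point $\Phi(v)$.
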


Inserting or deleting a dummy vertex does not ``change'' the graph at a metric or measure theoretic level. \emph{Any point} $x \in \Gamma$ may thus be treated as a vertex if it is convenient to do so.

\begin{remark}
It is easy to define higher order derivatives \emph{edgewise}: for example, if $f\in\nolinebreak W^{1,p}(\Gamma)$, and $e \in E$, we may write $f|_e'' \in L^p(e)$, and so $f|_e \in W^{2,p}(e)$, if there is a function $g \in L^p(e)$ (which we will call $f|_e''$) such that
\begin{displaymath}
	\int_e g \phi\,\textrm{d}x = -\int_e f' \phi'\,\textrm{d}x = \int_e f\phi''\,\textrm{d}x
\end{displaymath}
for all $\phi \in C_c^\infty (e)$, i.e. for all functions $\phi$ which are supported in the interior of the edge $e$. (Compare with Definition~\ref{def:delta-u}.)

However, there is no canonical way to define $W^{k,p}(\Gamma)$ (nor $C^k(\Gamma)$) for $k \geq 2$, since a (non-canonical) choice needs to be made regarding the conditions to impose on the derivatives of $f$ at the vertices; continuity of the derivatives is \emph{not} usually the most natural condition (as we will see). Sometimes it is useful to consider
\begin{equation}
\label{eq:wkp}
	\widetilde{W}^{k,p}(\Gamma) := \bigoplus_{k=1}^m W^{k,p}(e_k),
\end{equation}
which \emph{de facto} imposes \emph{no} conditions at the vertices.

Depending on what choice we make, the insertion or deletion of dummy vertices may affect the function spaces; for example, this is the case for $\widetilde{W}^{k,p}$ if $k \geq 1$.
\end{remark}

Notationally, if $f \in \widetilde{W}^{2,p}(\Gamma)$, so that $f'$ is continuous on each edge, and $e \sim v$, then we will write
\begin{equation}
\label{eq:vertex-derivative}
	\partial_\nu f|_{e} (v)
\end{equation}
for the (well-defined) value of the derivative of $f$ on the edge $e$ at its endpoint $v$, pointing \emph{into} $v$. This may be considered as an analogue of the outer normal derivative of a function $u: \overline{\Omega} \to \C$ on $\partial\Omega$.

\subsection{Laplacian and Schr{\"o}dinger operators on metric graphs}
\label{sec:laplacian-graph}

Given a compact metric graph $\Gamma = (V,E)$, how do we define operators like $-\Delta$ or $-\Delta+q$ on $\Gamma$? On each edge $e$, $-\Delta f$ should just be $-f|_e''$ (defined in the distributional sense), but what about the vertex conditions? Or, put differently, what should the operator domain be?

\emph{Form approach:} given a potential $q \in L^\infty (\Gamma,\R)$, define a form $a_q : H^1(\Gamma) \times H^1(\Gamma) \to \C$ by
\begin{equation}
\label{eq:form-graph}
	a_q (f,g) = \int_\Gamma f'(x)\overline{g'(x)} + q(x)f(x)\overline{g(x)}\,\textrm{d}x.
\end{equation}

\begin{lemma}
\label{lem:form-graph}
The form $a_q$ is a bounded Hermitian form which is $L^2$-elliptic on the space $H^1(\Gamma)\times H^1(\Gamma)$, that is, there exist $\xi \in \R$ and $\omega>0$ such that
\begin{displaymath}
	a_q (f,f) + \xi \|f\|_{L^2(\Gamma)}^2 \geq \omega \|f\|_{H^1(\Gamma)}^2
\end{displaymath}
for all $f \in H^1(\Gamma)$.
\end{lemma}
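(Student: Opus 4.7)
The assertion has three parts (Hermitian, bounded, $L^2$-elliptic), all of which reduce quickly to elementary estimates on the edge integrals together with the fact that $q \in L^\infty(\Gamma,\R)$. The plan is to handle each in turn, with no subtle graph-theoretic input: since $a_q$ is defined purely through integrals over $\Gamma$ using the norm \eqref{eq:graph-norm} and its $H^1$-counterpart, all estimates can be performed globally on $\Gamma$ exactly as one would on a bounded interval.

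\emph{Hermitian property.} Sesquilinearity in the first slot and antilinearity in the second follow at once from linearity of the edgewise Lebesgue integral (and the decomposition \eqref{eq:integral}). For the symmetry relation $a_q(f,g) = \overline{a_q(g,f)}$ I use that $q$ is real valued, so that $\overline{a_q(g,f)} = \int_\Gamma \overline{g'}\,f' + q\,\overline{g}\,f\,\mathrm{d}x = a_q(f,g)$.

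\emph{Boundedness.} Applying the Cauchy--Schwarz inequality to each of the two integrals defining $a_q$ and then using $\|f'\|_2,\|f\|_2 \le \|f\|_{H^1(\Gamma)}$ gives
\begin{displaymath}
    |a_q(f,g)| \le \|f'\|_2 \|g'\|_2 + \|q\|_\infty \|f\|_2 \|g\|_2 \le \bigl(1 + \|q\|_\infty\bigr)\|f\|_{H^1(\Gamma)}\|g\|_{H^1(\Gamma)},
\end{displaymath}
so $M := 1+\|q\|_\infty$ works.

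\emph{$L^2$-ellipticity.} The bound $|q(x)| \le \|q\|_\infty$ gives the pointwise-a.e.\ estimate $q|f|^2 \ge -\|q\|_\infty |f|^2$, hence
\begin{displaymath}
    a_q(f,f) = \|f'\|_2^2 + \int_\Gamma q|f|^2\,\mathrm{d}x \ge \|f'\|_2^2 - \|q\|_\infty \|f\|_2^2.
\end{displaymath}
Choosing $\xi := \|q\|_\infty + 1$ and $\omega := 1$ then yields
\begin{displaymath}
    a_q(f,f) + \xi\|f\|_{L^2(\Gamma)}^2 \ge \|f'\|_2^2 + \|f\|_2^2 = \|f\|_{H^1(\Gamma)}^2,
\end{displaymath}
which is the required inequality.

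\emph{Main obstacle.} There really is none: all three properties are essentially free once one observes that $q \in L^\infty(\Gamma,\R)$ and that the $H^1(\Gamma)$-norm dominates both the $L^2$-norm of $f$ and of $f'$. The only point where any care is needed is keeping track of signs in the ellipticity step (the potential $q$ may be negative, which is why the inequality requires a nontrivial shift $\xi$ rather than coercivity; and why the statement only asserts $L^2$-ellipticity, and not coercivity, in contrast to the Dirichlet case of Lemma~\ref{lem:aq-omega}).
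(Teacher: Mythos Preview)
Your proof is correct and complete. The paper does not actually give a proof of this lemma: it simply refers the reader to Exercise~2.2, so there is no approach to compare against. Your argument is exactly the standard one expected here, and the choice of explicit constants $M = 1 + \|q\|_\infty$, $\xi = \|q\|_\infty + 1$, $\omega = 1$ is clean and optimal in form.
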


\begin{proof}
Exercise~2.2.
\end{proof}

\begin{proposition}
\label{prop:standard-laplacian}
The operator $A_q : D(A_q) \subset L^2(\Gamma) \to L^2(\Gamma)$ associated with $a_q$ is given by
\begin{equation}
\label{eq:operator-domain-graph}
	D(A_q) = \left\{f\in H^1(\Gamma): f|_e'' \in L^2(e)\,\, \forall e \in E, \text{ and } \sum_{e\sim v} \partial_\nu f|_e (v) = 0\,\,\forall v \in V\right\},
\end{equation}
and
\begin{displaymath}
	(A_q f)|_e = -f|_e'' + q|_ef|_e \qquad \text{in } L^2(e) \text{ for every edge } e \in E.
\end{displaymath}
\end{proposition}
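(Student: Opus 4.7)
The plan is to establish both inclusions in \eqref{eq:operator-domain-graph} by a direct computation based on edgewise integration by parts, exploiting the fact that membership in $H^1(\Gamma)$ encodes continuity at every vertex (so any test function $g$ has a single well-defined value $g(v)$ regardless of which incident edge is used to approach $v$). Recall that by definition, $f \in D(A_q)$ iff $f \in H^1(\Gamma)$ and there exists $h \in L^2(\Gamma)$ with $a_q(f,g) = \langle h, g\rangle_{L^2(\Gamma)}$ for all $g \in H^1(\Gamma)$, in which case $A_q f = h$.

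For the inclusion ``$\supseteq$'' I would take $f$ satisfying the right-hand side of \eqref{eq:operator-domain-graph} and, for any $g \in H^1(\Gamma)$, apply one-dimensional integration by parts on each interval $[0,\ell_e]$. Summing the edgewise identities and regrouping the boundary contributions by vertex gives, using the continuity of $g$ to factor $\overline{g(v)}$ out of the inner sum,
\[
a_q(f,g) = \int_\Gamma(-f'' + q f)\,\overline{g}\,dx + \sum_{v \in V} \overline{g(v)} \sum_{e \sim v} \partial_\nu f|_e(v).
\]
The Kirchhoff hypothesis kills the vertex sum, the first integrand lies in $L^2(\Gamma)$ since $q \in L^\infty$ and $f|_e'' \in L^2(e)$ on each edge, and so $f \in D(A_q)$ with $A_q f = -f'' + qf$.

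For the converse direction, I would first test the identity $a_q(f,g) = \langle h, g\rangle$ only against $g \in C_c^\infty(e)$ extended by zero to $\Gamma$ (which lies in $H^1(\Gamma)$ since such $g$ vanishes near every vertex). This immediately yields $-f|_e'' + q|_e f|_e = h|_e$ in the distributional sense on each edge, and in particular $f|_e'' \in L^2(e)$. Substituting this back into the edgewise integration by parts formula displayed above, the equation $a_q(f,g) = \langle -f'' + qf, g\rangle$ forces the global vertex identity $\sum_{v} \overline{g(v)} \sum_{e \sim v} \partial_\nu f|_e(v) = 0$ to hold for \emph{every} $g \in H^1(\Gamma)$.

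The main subtlety will be extracting the \emph{pointwise} Kirchhoff condition at each individual vertex $v_0$ from this single global identity: for this I would construct, for each $v_0 \in V$, a ``hat'' test function $g \in H^1(\Gamma)$ with $g(v_0) = 1$, $g(v) = 0$ for all other vertices $v$, piecewise linear on the edges incident to $v_0$ and identically zero on the remaining edges. Plugging such $g$ into the global identity isolates $\sum_{e \sim v_0} \partial_\nu f|_e(v_0) = 0$, as required. Beyond the construction of these localised test functions, the only real bookkeeping hazard is orientation/sign consistency in \eqref{eq:vertex-derivative}: one has to verify that with the ``pointing into $v$'' convention, the interval boundary term $\bigl[f'\overline{g}\bigr]_0^{\ell_e}$ rewrites cleanly as $\partial_\nu f|_e(v_1)\overline{g(v_1)} + \partial_\nu f|_e(v_2)\overline{g(v_2)}$ for the two endpoints $v_1,v_2$ of $e$, after which everything assembles as above.
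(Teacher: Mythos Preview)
Your proposal is correct and is precisely the standard argument one would expect here; the paper itself does not give a proof but defers it as an ``important exercise'' (Exercise~2.3), so there is nothing substantive to compare. Your handling of the sign convention in \eqref{eq:vertex-derivative} is right: with $e\simeq[0,\ell_e]$, $0\sim v_1$, $\ell_e\sim v_2$, one has $\partial_\nu f|_e(v_1)=-f'(0)$ and $\partial_\nu f|_e(v_2)=f'(\ell_e)$, so $[f'\overline{g}]_0^{\ell_e}=\partial_\nu f|_e(v_1)\overline{g(v_1)}+\partial_\nu f|_e(v_2)\overline{g(v_2)}$ and your displayed identity holds as written; the only extra care needed is when $e$ is a loop at $v_0$ (both endpoints coincide), but then your hat function can simply be taken $\equiv 1$ on that loop, and the surjectivity of $H^1(\Gamma)\ni g\mapsto (g(v))_{v\in V}$ still isolates each vertex.
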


\begin{proof}
Important exercise (Exercise~2.3).
\end{proof}

If, on every edge $e \in E$, $f|_e$ admits a weak second derivative $f|_e'' \in L^2(e)$ (in the sense of Definition~\ref{def:weak-derivative}, with $\Omega = e$), then we just write $f'' \in L^2(\Gamma)$ for the resulting edgewise defined function. Thus $A_q f(x) = -f''(x)+q(x)f(x)$ almost everywhere in $\Gamma$.

The characterisation \eqref{eq:operator-domain-graph} implies that at each vertex $v \in V$, two conditions are imposed on functions $f \in D(A_q)$:
\begin{enumerate}
\item $f$ should be \emph{continuous} at $v$ (this is contained in the condition $f \in H^1(\Gamma) \hookrightarrow C(\Gamma)$);
\item The sum of the (``outer normal'') derivatives of $f$ at all edges pointing into $v$ (cf.~\eqref{eq:vertex-derivative}), should be $0$.
\end{enumerate}
Since $D(A_q) \subset \widetilde{W}^{2,2}(\Gamma)$, these first-order derivatives are well defined by Lemma~\ref{lem:embedding-1d}. (2) is often called a \emph{Kirchhoff condition}, from the principle of ``current conservation'' (``what flows in, must flow out'').

Conditions (1) and (2) together are variously called standard, natural, Neumann--Kirchhoff, and continuity--Kirchhoff vertex conditions, and there are probably other names in use as well. (Observe that if $\deg v=1$, then they reduce to just a Neumann condition at the endpoint of the edge.) When $q=0$, $A_0$ is often called the \emph{Laplacian with standard vertex conditions}, a.k.a.\ the \emph{standard} or \emph{Kirchhoff Laplacian}.

\begin{remark}
\label{rem:other-vc}
There are of course many alternative conditions that can be imposed at the vertices, or a subset of the vertices. For example, imposing a Dirichlet condition at a vertex $v$ is easy; we simply require that $f(v)=0$ there instead (see Exercise 2.4). The ``standard'' conditions just introduced are the most common and are often considered ``natural'', since the vertices of a graph do not really play the same role as the boundary of a domain. Correspondingly, there will be no heat loss from a graph equipped with only standard conditions, as we will see below.

It can also be shown that inserting or deleting a dummy vertex (cf.\ Remark~\ref{rem:dummy-vertices}) equipped with standard conditions does not alter $A_q$ (up to unitary equivalence). This is not generally true of other vertex conditions.
\end{remark}

\begin{theorem}
\label{thm:spectral-theorem-graph}
Let $\Gamma$ be a connected, compact metric graph, and let $q \in L^\infty (\Gamma)$. The operator $A_q$ is self-adjoint and bounded from below on $L^2(\Gamma)$, and has compact resolvent. In particular, its spectrum is real and takes the form
\begin{displaymath}
	\lambda_1 < \lambda_2 \leq \lambda_3 \leq \ldots \to \infty,
\end{displaymath}
where each eigenvalue has finite algebraic $=$ geometric multiplicity. The associated eigenfunctions $\psi_k \sim \lambda_k$, $\psi_k \in D(A_q) \subset H^1(\Gamma)$, may be chosen real, and may be chosen to form an orthonormal basis of $L^2(\Gamma)$.
\end{theorem}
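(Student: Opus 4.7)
The plan is to invoke the abstract spectral theorem, Theorem~\ref{thm:spectral}, in the setting $H = L^2(\Gamma)$, $V = H^1(\Gamma)$, with form $a_q$ from \eqref{eq:form-graph}. Lemma~\ref{lem:form-graph} already supplies that $a_q$ is bounded, Hermitian and $L^2$-elliptic, while Proposition~\ref{prop:standard-laplacian} identifies $A_q$ as the associated operator. The only remaining hypothesis of Theorem~\ref{thm:spectral} to verify is that $H^1(\Gamma)$ embeds compactly and densely in $L^2(\Gamma)$. Granting this, Theorem~\ref{thm:spectral} immediately yields every claim except the strict inequality $\lambda_1 < \lambda_2$: self-adjointness forces the spectrum to be real and (algebraically $=$ geometrically) finite-multiplicity, Theorem~\ref{thm:spectral} itself furnishes the orthonormal basis, and the real-valuedness of the coefficients of $a_q$ lets us split any complex eigenfunction into real and imaginary parts to produce real eigenfunctions.

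To produce the embedding, density is immediate: on each edge $e_k \simeq [0,\ell_k]$, $C_c^\infty((0,\ell_k))$ is dense in $L^2(e_k)$, and any such function, extended by zero on the other edges, vanishes in a neighbourhood of every vertex and hence automatically belongs to $C(\Gamma) \cap H^1(\Gamma)$; summing over $k$ yields a dense subset of $L^2(\Gamma) = \bigoplus_k L^2(e_k)$. For compactness, the key observation is that $H^1(\Gamma)$ embeds continuously into the decoupled space $\bigoplus_k H^1(e_k)$ via restriction to edges, and each one-dimensional embedding $H^1(e_k) \hookrightarrow L^2(e_k)$ is compact by the classical Rellich--Kondrachov theorem on a bounded interval (the one-dimensional case of Theorem~\ref{thm:embedding-omega}). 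A finite direct sum of compact operators is compact, so $H^1(\Gamma) \hookrightarrow L^2(\Gamma)$ is compact as well.

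The main obstacle is the simplicity of $\lambda_1$, which is \emph{not} given by Theorem~\ref{thm:spectral} and requires both connectedness of $\Gamma$ and a Perron--Frobenius-type argument. I would proceed as follows. By the variational characterisation of Theorem~\ref{thm:courant-fischer}, a real $\psi_1 \in H^1(\Gamma)$ is a first eigenfunction iff it minimises the Rayleigh quotient $a_q(f,f)/\|f\|_2^2$. Since $|\psi_1|$ lies in $H^1(\Gamma)$ (continuity is preserved, and edgewise its derivative equals $\pm \psi_1'$ almost everywhere) and satisfies $a_q(|\psi_1|,|\psi_1|) = a_q(\psi_1,\psi_1)$ and $\||\psi_1|\|_2 = \|\psi_1\|_2$, I may replace $\psi_1$ by $|\psi_1| \geq 0$. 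I then claim such a nonnegative eigenfunction is strictly positive everywhere. If $\psi_1(x_0) = 0$ at an interior point of some edge $e$, then $\psi_1'(x_0) = 0$ as $x_0$ is a minimum, and uniqueness for the linear ODE $-\psi_1'' + q\psi_1 = \lambda_1 \psi_1$ on $e$ forces $\psi_1 \equiv 0$ on $e$. If instead $\psi_1(v) = 0$ at a vertex $v$, then on every incident edge $e \sim v$ the nonnegative function $\psi_1|_e$ attains its minimum at $v$, so $\partial_\nu \psi_1|_e (v) \leq 0$ in the convention of \eqref{eq:vertex-derivative}; the Kirchhoff condition from \eqref{eq:operator-domain-graph} then forces each of these derivatives to vanish, and ODE uniqueness once more yields $\psi_1 \equiv 0$ on each such edge. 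Propagating this through $\Gamma$ using connectedness gives $\psi_1 \equiv 0$, a contradiction. Finally, if $\lambda_1$ had multiplicity $\geq 2$, I could choose two real orthogonal first eigenfunctions and replace each by its absolute value, obtaining two strictly positive functions whose $L^2$-inner product is strictly positive, contradicting orthogonality; hence $\lambda_1 < \lambda_2$.
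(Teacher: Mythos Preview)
Your argument is essentially correct and matches the paper's (implicit) approach: the paper does not spell out a proof of this theorem, treating it as a direct consequence of the abstract spectral theorem (Theorem~\ref{thm:spectral}) applied via Lemma~\ref{lem:form-graph} and Proposition~\ref{prop:standard-laplacian}, and deferring the simplicity of $\lambda_1$ to a parenthetical reference to Perron--Frobenius theory. You have supplied the details the paper omits, in particular the compact dense embedding $H^1(\Gamma)\hookrightarrow L^2(\Gamma)$ and the ODE/Kirchhoff propagation argument for strict positivity; these are all sound.

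There is one small slip in your last sentence. Having shown that $|\psi_1|>0$ and $|\psi_2|>0$ everywhere, you conclude that $\langle |\psi_1|,|\psi_2|\rangle>0$ ``contradict[s] orthogonality''. But orthogonality was assumed for $\psi_1,\psi_2$, not for $|\psi_1|,|\psi_2|$, so as written this is a non sequitur. The fix is immediate: since $|\psi_i|>0$ everywhere, the continuous function $\psi_i$ never vanishes and hence has a fixed sign, so $\psi_i=\pm|\psi_i|$; then $|\langle \psi_1,\psi_2\rangle|=\langle |\psi_1|,|\psi_2|\rangle>0$, which \emph{does} contradict $\langle \psi_1,\psi_2\rangle=0$. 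With this one-line correction the proof is complete.
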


We will not deal with non-compact graphs here (that is, graphs with an infinite number of edges and/or infinite total length), where different types of spectrum may be present and quite a different set of issues and results emerges. These are explored comprehensively in \cite{dkmpt,kostenko,knm}.

\begin{remark}
The pair $(\Gamma,A_q)$, or, equivalently, the triple ``metric graph $\Gamma$ $+$ self-adjoint differential expression on the edges $+$ suitable (``self-adjoint'') vertex conditions'', is often called a \emph{quantum graph}. The standard book on the subject is \cite{berkolaiko}, although \cite{band17,berkolaiko16} may be more suitable as introductory sources for the layperson. It is most commonly held \cite{berkolaiko16} that the name arose as a shortening of the title of a 25-year-old article of Kottos and Smilansky, ``Quantum chaos on graphs'' \cite{kottos}, which studied (differential operators on) metric graphs as a model of \emph{quantum chaos}.

However, the study of differential operators on metric graphs goes back much further; there was a wave of activity in the 1980s, sometimes under the name $c^2$-networks, as in the seminal paper \cite{vonbelow}, which establishes a link between the spectrum of the standard Laplacian on an \emph{equilateral} compact metric graph (on which all edges have the same length) and the spectrum of a difference operator Laplacian on a corresponding discrete graph.

Even before that, what we now call quantum graphs were studied in the middle of last century with a view to applications in physics and chemistry, for example as a way to model nanoscale systems \cite{ruedenberg}.
\end{remark}

\subsection{First observations on the spectrum}
\label{sec:graphs-spectrum}

We will keep the assumptions of Theorem~\ref{thm:spectral-theorem-graph}. In light of that theorem, \emph{from now on we may, and will, assume that all functions are real valued}.

\begin{remark}
\label{rem:courant-fischer-graph}
The eigenvalues $\lambda_k$ may be characterised by the min-max and max-min principles of Theorem~\ref{thm:courant-fischer}; for example,
\begin{equation}
\label{eq:lambda1-rq-graph}
	\lambda_1 = \min_{0 \neq f \in H^1(\Gamma)} \frac{\int_\Gamma |f'(x)|^2+q(x)|f(x)|^2\,\textrm{d}x}{\int_\Gamma |f(x)|^2\,\textrm{d}x}.
\end{equation}
The quotient on the right-hand side of \eqref{eq:lambda1-rq-graph} is the Rayleigh quotient of $f$.
\end{remark}

\begin{remark}
Since $\Gamma$ is connected, it can be shown that $\lambda_1$ is simple and its eigenfunction $\psi_1$ may be chosen strictly positive everywhere in $\Gamma$ (Perron--Frobenius).
\end{remark}

\begin{remark}
\label{rem:standard-laplace-spectrum}
If $q=0$, then all eigenvalues of the standard Laplacian $A_0$ are nonnegative, since $a_0(f,f) \geq 0$ for all $f \in H^1(\Gamma)$. In this case, the standard Laplacian recalls the Neumann Laplacian; we recall that on a degree one vertex, the standard condition reduces to Neumann. In this case, instead of $\lambda_k$, we will write
\begin{displaymath}
	\mu_1 < \mu_2 \leq \mu_3 \leq \ldots \to \infty
\end{displaymath}
for the eigenvalues of $A_0$. This is partly because it will be convenient to distinguish these eigenvalues from other types of eigenvalues that will appear occasionally, and partly for historical reasons due to the analogy with the Neumann Laplacian. The spectrum of $A_0$ resembles the one of the Neumann Laplacian: we have $\mu_1 = 0$, the constant functions (which obviously satisfy both the continuity and the Kirchhoff conditions) being the eigenfunctions, and in this case
\begin{equation}
\label{eq:lambda2-rq-graph}
	0 < \mu_2 = \min_{\substack{0 \neq f \in H^1(\Gamma)\\ \int_\Gamma f\,\textrm{d}x = 0}}
	\frac{\int_\Gamma |f'|^2\,\textrm{d}x}{\int_\Gamma |f|^2\,\textrm{d}x}
\end{equation}
with equality if and only if $f$ is an eigenfunction for $\mu_2$, the condition $\int_\Gamma f\,\textrm{d}x = 0$ being the orthogonality condition, $\langle f,\psi_1\rangle = 0$.
\end{remark}

A notable difference between the eigenfunctions of graphs on the one hand, and eigenfunctions of domains or manifolds on the other, is that on graphs there is no global \emph{unique continuation principle}. Put differently, it is possible for an eigenfunction to be zero identically on an edge without being zero on the whole graph. A proper study of the eigenfunctions of the star graph of Example~\ref{ex:3-star} (or Example~\ref{ex:star} just below) yields an example; see also Exercise 3.1.

If $\Gamma$ consists of just a single edge (interval) $[0,\ell]$, and $q=0$, then the eigenvalues $\mu_k$ are the real numbers $\lambda$ for which the equation
\begin{displaymath}
	\cos (\sqrt{\lambda}\ell) = 0
\end{displaymath}
has a solution. A similar principle holds on all compact graphs: the eigenvalues are the solutions of a transcendental equation known as the \emph{secular equation}. However, even for simple graphs these equations generally become unmanageably complicated, and their solutions cannot generally be found explicitly:

\begin{center}
\noindent{\fbox{\parbox{280pt}{%{\textwidth}{
%\textbf{WARNING}\\
For ``most'' quantum graphs it is generally impossible to obtain a closed analytic expression for the individual eigenvalues, even if we restrict to the Laplacian.
}}}
\end{center}

This is, however, far from being the whole story; this is still a far more explicit representation of the eigenvalues than one has on general domains and manifolds, where there is essentially no exact way to represent the eigenvalues at all; and there are, correspondingly, various quite powerful tools and approaches on quantum graphs that allow one to say a lot about the spectrum, and far more than using the corresponding approach on domains or manifolds.

This principle is a partial motivation for much of what we will do in Chapter~\ref{ch:geometric-spectral-theory}, which is one such approach. There are, however, several others, often very well developed, and although it would go beyond the scope of these notes to explore them here, it seems appropriate to mention a few of what are arguably the most important, and give a few references. This is however by no means a complete list, either in terms of the available techniques or in terms of the literature; for that, we refer to some of the many good books and surveys on the subject, including \cite{berkolaiko16,berkolaiko,kurasovbook}.

First, a systematic approach to the secular equation is offered by \emph{scattering matrices} and the \emph{secular determinant}, see, e.g. \cite[Section~1.4]{band17}, \cite[Section~5]{berkolaiko16}, or \cite[Section~2.1]{berkolaiko}.

Second, there are powerful, and often exact, \emph{trace formulae} available for the spectrum of the quantum graph as a whole, and typically based on the \emph{periodic orbits} (or, put differently, the set of closed paths on the graph), which results, for example, in exact expressions for the \emph{spectral measure}, or \emph{density of states}, of the graph in terms of its geometric and topological properties, or as an inverse formula for the Euler characteristic in terms of the spectrum. There is an advanced body of literature studying such formulae, which are often inspired by similar but generally not exact formulae on manifolds; in the context of quantum graphs these probably go back to the seminal paper \cite{kottos2}, see also, e.g. \cite{bhj} or, quite recently, \cite[Chapters 8 and 9]{kurasovbook}. 

Third, one can use \emph{M-functions} (known in the literature under a variety of different names, including Titchmarsh--Weyl M-functions and Dirichlet-to-Neumann operators) to study the spectrum in dependence on the vertex coupling conditions (cf.\ Exercise 2.4), which in turn can be used to analyse the spectrum of even just the standard Laplacian on a graph, see \cite[Chapters 17 and 18]{kurasovbook}.

\begin{example}[Secular equation of a star graph]
\label{ex:star}
Cf.\ \cite[Example~2.3]{berkolaiko16}. Consider the $3$-star graph of Example~\ref{ex:3-star}, but where now $e_k$ has length $\ell_k >0$, $k=1,2,3$, and suppose $q=0$. We will find the (or, more precisely, a) secular equation for the eigenvalues of $A_0$.

Choose local coordinates $e_k \simeq [0,\ell_k]$, such that $0\sim v_k$, $\ell_k \sim v_4$ (corresponding to the orientation in Figure~\ref{fig:3-star}-right). On each edge, in local coordinates, an eigenfunction $\psi$ for some eigenvalue $\lambda$ is just a solution of $-\psi''=\lambda\psi$ and thus given by
\begin{displaymath}
	\psi|_{e_k} (x) = A_k \cos (\sqrt{\lambda} x) + B_k \sin (\sqrt{\lambda} x), \qquad A_k,B_k \in \R,\,\, k=1,2,3.
\end{displaymath}
(Here we have used that all eigenvalues are nonnegative, as noted in Remark~\ref{rem:standard-laplace-spectrum}.)
For what values of $A_k,B_k,\lambda$ do these edgewise defined functions also satisfy the vertex conditions?

At $v_1,v_2,v_3$, the Kirchhoff condition reduces to $\psi'=0$: thus $B_k=0$, $k=1,2,3$.

Continuity at $v_4$ implies
\begin{equation}
\label{eq:3-star-continuity}
	A_1 \cos (\sqrt{\lambda}\ell_1) = A_2 \cos (\sqrt{\lambda} \ell_2) = A_3 \cos (\sqrt{\lambda} \ell_3).
\end{equation}
The Kirchhoff condition at $v_4$ implies
\begin{displaymath}
	-\sqrt{\lambda} A_1 \sin (\sqrt{\lambda} \ell_1) - \sqrt{\lambda} A_2 \sin (\sqrt{\lambda} \ell_2) - \sqrt{\lambda} A_3 \sin (\sqrt{\lambda}\ell_3) = 0.
\end{displaymath}
As long as $\lambda \neq 0$ (that is, excluding $\mu_1 = 0$, which is not of interest), this is equivalent to
\begin{equation}
\label{eq:3-star-kirchhoff}
	A_1 \sin (\sqrt{\lambda} \ell_1) + A_2 \sin (\sqrt{\lambda} \ell_2) + A_3 \sin (\sqrt{\lambda}\ell_3).
\end{equation}
As long as the common term in \eqref{eq:3-star-continuity} is nonzero, we can eliminated the dependence on the $A_k$ by dividing \eqref{eq:3-star-kirchhoff} by this common value, to arrive at the secular equation characterising the (nonzero) eigenvalues of $\Gamma$:
\begin{equation}
\label{eq:3-star-secular}
	\tan (\sqrt{\lambda} \ell_1) + \tan (\sqrt{\lambda} \ell_2) + \tan (\sqrt{\lambda} \ell_3) = 0.
\end{equation}
(The terms in \eqref{eq:3-star-continuity} can be zero only under a very particular assumption on the edge lengths of $\Gamma$. We leave it as an exercise to check that the secular equation may be written in the following alternative form, which is also valid in this case.)
\begin{equation}
\label{eq:3-star-secular-general}
\begin{aligned}
	&\sin(\sqrt{\lambda} \ell_1)\cos(\sqrt{\lambda} \ell_2) \cos(\sqrt{\lambda} \ell_3)+\\
	&\cos(\sqrt{\lambda} \ell_1)\sin(\sqrt{\lambda} \ell_2) \cos(\sqrt{\lambda} \ell_3)+\\
	&\cos(\sqrt{\lambda} \ell_1)\cos(\sqrt{\lambda} \ell_2) \sin(\sqrt{\lambda} \ell_3)=0.
\end{aligned}
\end{equation}
\end{example}

\section{Geometric spectral theory of quantum graphs}
\label{ch:geometric-spectral-theory}

\subsection{On domains}
\label{sec:gst-domains}

Given a bounded domain $\Omega \subset \R^d$, $d\geq 2$, with sufficiently smooth boundary, as in Section~\ref{sec:laplacian} we denote by
\begin{displaymath}
	0 < \lambda_1 < \lambda_2 \leq \lambda_3 \leq \ldots
\end{displaymath}
the eigenvalues of the Dirichlet Laplacian $-\Delta^D$ on $\Omega$, and by
\begin{displaymath}
	0 = \mu_1 < \mu_2 \leq \mu_3 \leq \ldots
\end{displaymath}
the eigenvalues of the Neumann Laplacian $-\Delta^N$ on $\Omega$.

How do the eigenvalues depend on $\Omega$? Write $\lambda_k = \lambda_k (\Omega)$, $\mu_k = \mu_k (\Omega)$.

\emph{Example 1:} The \emph{Weyl asymptotics} (a.k.a.\ \emph{Weyl's law}) asserts that
\begin{displaymath}
	\lambda_k(\Omega),\,\mu_k(\Omega) = C(|\Omega|)k^{2/d} + o(k^{2/d})
\end{displaymath}
as $k \to \infty$, for a constant $C(|\Omega|)>0$ depending only on the volume of $\Omega$ and the dimension $d$, whose precise value does not interest us here. In the Dirichlet case, this was originally proved by Hermann Weyl, a student of David Hilbert at G{\"o}ttingen, around 1911, in response to a conjecture formulated by leading physicists of the time, see \cite{arendt}.

\emph{Example 2:} Based on a conjecture of Lord Rayleigh in the late 19th Century, the following theorem was proved in the 1920s \cite{faber,krahn}.

\begin{theorem}[Faber--Krahn]
\label{thm:faber-krahn}
Let $B \subset \R^d$ be a ball of the same volume as $\Omega$, $|B|=|\Omega|$. Then
\begin{displaymath}
	\lambda_1 (\Omega) \geq \lambda_1 (B).
\end{displaymath}
There is equality iff $\Omega=B$ up to rigid transformations.
\end{theorem}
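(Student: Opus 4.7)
The plan is to use \emph{Schwarz symmetrization} (symmetric decreasing rearrangement) together with the variational characterisation of $\lambda_1$ from Theorem~\ref{thm:courant-fischer}. Let $\psi_1 \in H^1_0(\Omega)$ be a first Dirichlet eigenfunction on $\Omega$, which by Perron--Frobenius may be taken strictly positive, normalised so that $\|\psi_1\|_2 = 1$. I would introduce the symmetric decreasing rearrangement $\psi_1^\ast$: the unique radially symmetric, radially nonincreasing function on $B$ whose superlevel sets $\{\psi_1^\ast > t\}$ are open balls of the same $d$-dimensional Lebesgue measure as $\{\psi_1 > t\}$ for each $t \geq 0$ (with $\psi_1^\ast$ set to $0$ outside $B$). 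Since $|B| = |\Omega|$, $\psi_1^\ast$ is well-defined on $B$, and since $\psi_1 \in H^1_0(\Omega)$ one checks that $\psi_1^\ast \in H^1_0(B)$.

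The two properties driving the argument are:
\begin{enumerate}
\item[\textbf{(i)}] Equimeasurability: for any Borel measurable $F:[0,\infty)\to[0,\infty)$, $\int_B F(\psi_1^\ast)\,\textrm{d}x = \int_\Omega F(\psi_1)\,\textrm{d}x$; in particular $\int_B (\psi_1^\ast)^2\,\textrm{d}x = \int_\Omega \psi_1^2\,\textrm{d}x = 1$.
\item[\textbf{(ii)}] The P\'olya--Szeg\H{o} inequality: $\int_B |\nabla \psi_1^\ast|^2\,\textrm{d}x \leq \int_\Omega |\nabla \psi_1|^2\,\textrm{d}x$.
\end{enumerate}
Once these are in hand, plugging $\psi_1^\ast$ into the Rayleigh quotient characterisation \eqref{eq:l1} of $\lambda_1(B)$ gives
\begin{displaymath}
    \lambda_1(B) \leq \frac{\int_B |\nabla \psi_1^\ast|^2\,\textrm{d}x}{\int_B (\psi_1^\ast)^2\,\textrm{d}x} \leq \frac{\int_\Omega |\nabla \psi_1|^2\,\textrm{d}x}{\int_\Omega \psi_1^2\,\textrm{d}x} = \lambda_1(\Omega),
\end{displaymath}
which is the desired inequality.

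The main obstacle is (ii), the P\'olya--Szeg\H{o} inequality; the standard route uses the co-area formula
\begin{displaymath}
    \int_\Omega |\nabla \psi_1|\,\textrm{d}x = \int_0^\infty \mathcal{H}^{d-1}(\{\psi_1 = t\})\,\textrm{d}t,
\end{displaymath}
combined with Cauchy--Schwarz and the classical \emph{isoperimetric inequality} applied to each superlevel set $\{\psi_1 > t\}$, which says $\mathcal{H}^{d-1}(\partial\{\psi_1>t\}) \geq \mathcal{H}^{d-1}(\partial\{\psi_1^\ast>t\})$ since the latter set is a ball of the same volume. Matching the co-area decompositions of $\int|\nabla\psi_1|^2$ and $\int|\nabla\psi_1^\ast|^2$ then yields (ii). (A small technical point is that $\psi_1$ should have no flat regions at positive values; this follows from real-analytic regularity of Laplace eigenfunctions in the interior and shows that the distribution function of $\psi_1$ is absolutely continuous.)

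For the equality statement, if $\lambda_1(\Omega) = \lambda_1(B)$ then equality must hold throughout the chain, hence in particular in P\'olya--Szeg\H{o}, which traces back to equality in the isoperimetric inequality for almost every superlevel set $\{\psi_1 > t\}$. This forces almost every such superlevel set to be a ball (up to a null set and rigid motion), and since $\psi_1 > 0$ in $\Omega$ with $\psi_1 = 0$ on $\partial\Omega$, taking $t \to 0^+$ identifies $\Omega$ itself with a ball up to rigid transformations. The centres of the superlevel balls must also be shown to coincide, which follows from the continuity and nesting of the superlevel sets of the continuous function $\psi_1$.
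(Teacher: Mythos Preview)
Your proposal is correct and follows essentially the same strategy as the paper's sketch: take the positive first eigenfunction, form its symmetric decreasing rearrangement $\psi_1^\ast \in H^1_0(B)$, use equimeasurability and the P\'olya--Szeg\H{o} inequality (via the coarea formula and the isoperimetric inequality on level sets), and conclude via the variational characterisation of $\lambda_1$. You in fact supply more detail than the paper, which only outlines the argument; your treatment of the equality case (tracing back to equality in the isoperimetric inequality on almost every superlevel set) matches the paper's brief remark.
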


Since $\lambda_1$ governs the rate of decay of (the $L^2$-norms of) solutions to the heat equation (Section~\ref{sec:heat}), this theorem says that, among all objects of given volume, if there is perfect cooling at the boundary, the rate of heat loss (in this sense) is minimised when the object is a ball.% This result is a starting point of geometric spectral theory, or more specifically \emph{shape optimisation} in spectral theory, see \cite{henrot06} or \cite{henrot17}.

Theorem~\ref{thm:faber-krahn} is a starting point of geometric spectral theory, or more specifically \emph{shape optimisation} in spectral theory, see \cite{henrot06} or \cite{henrot17}, which also finds applications in nonlinear PDEs (where the eigenvalues often play the role of a critical or threshold value; an example of this happening on metric graphs is in \cite[Section~3]{cacciapuoti}) and related functional inequalities (for example, Theorem~\ref{thm:faber-krahn} together with the variational characterisation of the eigenvalues shows that $\lambda_1(B)$ is the smallest constant depending only on the volume of the domain for which Friedrichs' inequality is valid); these, in turn, are often needed in stability and domain perturbation analysis (e.g. \cite{daners,kennedyrobin}). Just like, or even more than, in the case of quantum graphs, only in extremely special cases is it possible to determine the eigenvalues exactly.

For the Neumann Laplacian, the baseline inequality is reversed.

\begin{theorem}[Szeg\H{o}--Weinberger]
\label{thm:szego-weinberger}
Under the same assumptions as Theorem~\ref{thm:faber-krahn},
\begin{displaymath}
	\mu_2 (\Omega) \leq \mu_2 (B),
\end{displaymath}
with equality iff $\Omega$ is a ball.
\end{theorem}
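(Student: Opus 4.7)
The plan is to apply the Courant--Fischer min-max characterisation in Theorem~\ref{thm:courant-fischer} to $\mu_2(\Omega)$ using cleverly chosen test functions inspired by the eigenfunctions on $B$, and then compare the resulting Rayleigh quotient on $\Omega$ with that on $B$ by exploiting the radial monotonicity of the test functions. Recall that the eigenvalue $\mu_2(B)$ of the Neumann Laplacian on the ball $B$ of radius $R$ is $d$-fold degenerate, and its eigenfunctions may be written, in polar coordinates about the centre of $B$, in the form $\varphi_i(x) = g(|x|)\, x_i/|x|$ for $i=1,\dots,d$, where $g$ is a specific Bessel-type radial profile satisfying $g(0)=0$ and $g'(R)=0$.

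The first step is to extend $g$ to a function $G:[0,\infty)\to\R$ by setting $G(r)=g(r)$ for $r\leq R$ and $G(r)=g(R)$ for $r>R$. Two monotonicity facts about $G$ will be essential and will need to be verified from the ODE satisfied by $g$:
\begin{equation*}
r \mapsto G(r) \text{ is non-decreasing}, \qquad r \mapsto G'(r)^2 + (d-1)\,\frac{G(r)^2}{r^2} \text{ is non-increasing}.
\end{equation*}
Given any centre $c\in\R^d$, define $\phi_i^c(x) := G(|x-c|)\,(x_i-c_i)/|x-c|$. To use each $\phi_i^c$ as an admissible test function in the variational characterisation \eqref{eq:lk-minmax} (analogous to \eqref{eq:lambda2-rq-graph}), I need $\int_\Omega \phi_i^c\,\textrm{d}x = 0$ for all $i=1,\dots,d$ simultaneously. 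I would obtain this by a Brouwer-type fixed point / continuity argument: the map $c \mapsto \bigl(\int_\Omega \phi_i^c\,\textrm{d}x\bigr)_{i=1}^d$ is a continuous vector field on $\R^d$, and since $G$ is bounded and non-decreasing, a standard degree argument forces a zero $c_*$ to exist inside (the convex hull of) $\Omega$.

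With this optimal centre $c_*$ fixed, taking $B^*$ to be the ball of radius $R$ about $c_*$ (so $|B^*|=|\Omega|=|B|$) and summing the Rayleigh quotient inequality $\mu_2(\Omega)\int_\Omega (\phi_i^{c_*})^2\,\textrm{d}x \leq \int_\Omega |\nabla \phi_i^{c_*}|^2\,\textrm{d}x$ over $i=1,\dots,d$ yields
\begin{equation*}
\mu_2(\Omega)\int_\Omega G(|x-c_*|)^2\,\textrm{d}x \;\leq\; \int_\Omega \left[G'(|x-c_*|)^2 + (d-1)\,\frac{G(|x-c_*|)^2}{|x-c_*|^2}\right]\textrm{d}x,
\end{equation*}
using the identities $\sum_i (\phi_i^{c_*})^2 = G^2$ and $\sum_i |\nabla \phi_i^{c_*}|^2 = G'^2 + (d-1)G^2/|x-c_*|^2$. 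A symmetrisation/bathtub-style comparison, exploiting the two monotonicity properties above, then replaces $\Omega$ by $B^*$ on both sides in the favourable direction: the non-decreasing integrand $G^2$ satisfies $\int_\Omega G^2 \geq \int_{B^*} G^2$ (shifting mass to $\Omega\setminus B^*$ where $|x-c_*|\geq R$ only increases it), while the non-increasing integrand in the numerator satisfies the reverse inequality. The right-hand side evaluated on $B^*$ is exactly $\mu_2(B)\int_{B^*} G^2\,\textrm{d}x$ by construction, giving $\mu_2(\Omega)\leq\mu_2(B)$. Equality in either rearrangement forces $\Omega$ and $B^*$ to coincide up to a null set, delivering the rigidity statement.

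The main obstacles are, first, verifying the two monotonicity properties of $G$, which require non-trivial analysis of the radial Bessel-type ODE satisfied by $g$ and its behaviour in the "plateau" region $r>R$; and second, executing the fixed-point argument for the centre $c_*$ cleanly, since $\Omega$ is not assumed convex and one must check that the relevant continuous map genuinely has a zero. The rearrangement step itself is the cleanest part once the monotonicity is in hand.
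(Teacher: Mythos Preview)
The paper does not actually prove Theorem~\ref{thm:szego-weinberger}; it is stated as a known result in Section~\ref{sec:gst-domains} alongside Faber--Krahn, and only the latter's proof strategy (symmetrisation of $\psi_1$) is sketched. So there is no ``paper's own proof'' to compare against here.

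That said, your proposal is correct and is precisely Weinberger's original 1956 argument: build test functions from the radial profile $g$ of the ball's Neumann eigenfunctions, extend $g$ by a constant beyond $R$, use a Brouwer-type argument to centre the test functions so that all $d$ of them have mean zero on $\Omega$, sum the Rayleigh inequalities, and then use the two monotonicity properties of $G$ to pass from $\Omega$ to $B^\ast$. The obstacles you flag are the genuine ones. The monotonicity of $G$ and of $G'(r)^2 + (d-1)G(r)^2/r^2$ do require a short ODE analysis (for $r\leq R$ one uses that $g>0$, $g'>0$ on $(0,R)$, which follows from a Sturm-type argument applied to the radial equation, and then differentiates the second expression and uses the equation; for $r>R$ both are immediate from the constant extension). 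The centring argument is cleanest via Brouwer on a large ball: for $|c|$ large the vector $\bigl(\int_\Omega \phi_i^c\,\textrm{d}x\bigr)_i$ points roughly in the direction of $-c$, so the map has nonzero degree and hence a zero. None of this requires convexity of $\Omega$.
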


That is, in the case of perfect insulation, the ball exhibits the ``slowest $L^2$-convergence'' among all objects of the same volume to the equilibrium $=$ uniform distribution.

No corresponding upper bound is possible in Theorem~\ref{thm:faber-krahn}; e.g.\ in dimension two, just consider a sequence of increasingly long, thin rectangles. Likewise, no lower bound (other than zero) is possible in Theorem~\ref{thm:szego-weinberger}; so-called \emph{dumbbell domains} $\Omega_\varepsilon$, two fixed equal balls connected by a thin passage of width $\varepsilon$, satisfy $\mu_2 (\Omega_\varepsilon) \to 0$ as $\varepsilon \to 0$.

The key ingredient in the proof of Theorem~\ref{thm:faber-krahn} is the \emph{isoperimetric inequality}: if $U \subset \R^d$ is open and $B \subset \R^d$ is a ball such that
\begin{displaymath}
\begin{aligned}
	|\partial U| = |\partial B|, \qquad \text{then} \qquad |U| \leq |B|,\\
	|U| = |B|, \qquad \text{then} \qquad |\partial U| \geq |\partial B|
\end{aligned}
\end{displaymath}
(with equality iff $U=B$ up to rigid transformations and a negligible set).

The strategy involves \emph{symmetrisation} (or rearrangement): starting from $ \psi_1 \in H^1_0 (\Omega)$ a positive eigenfunction for $\lambda_1 (\Omega)$, using tools from geometric measure theory (the \emph{coarea formula} and the isoperimetric inequality applied to the level sets of $\psi_1$) we construct a new function $0 < \psi_1^\ast \in H^1_0(B)$, a symmetrisation of $\psi_1$ such that
\begin{displaymath}
	\|\psi_1^\ast\|_{L^2(B)} = \|\psi_1\|_{L^2(\Omega)} \qquad \text{but} \qquad \|\nabla \psi_1^\ast\|_{L^2(B)} \leq \|\nabla \psi_1\|_{L^2(\Omega)}.
\end{displaymath}
Based on the existence of such a function, Theorem~\ref{thm:faber-krahn} follows from the variational characterisation:
\begin{displaymath}
	\lambda_1 (\Omega) = \frac{\int_\Omega |\nabla \psi_1|^2\,\textrm{d}x}{\int_\Omega |\psi_1|^2\,\textrm{d}x}
	\geq \frac{\int_B |\nabla \psi_1^\ast|^2\,\textrm{d}x}{\int_B |\psi_1^\ast|^2\,\textrm{d}x}
	\geq \inf_{0 \neq u \in H^1_0(B)} \frac{\int_B |\nabla u|^2\,\textrm{d}x}{\int_B |u|^2\,\textrm{d}x} = \lambda_1 (B).
\end{displaymath}
A finer analysis, using the characterisation of equality in the isoperimetric inequality and the variational characterisation, shows that the inequality is an equality only if $\Omega$ is a ball.

\subsection{On graphs: Nicaise' inequality}
\label{sec:nicaise}

We will take the following standing assumptions: $\Gamma = (V,E)$ is a connected, compact metric graph; in particular, $V= \{v_1,\ldots, v_n\}$ and $E = \{e_1, \ldots, e_m\}$ are finite; each edge $e_k$ has finite length $|e_k|>0$, and the total length $|\Gamma|$ of $\Gamma$ is fixed,
\begin{displaymath}
	|\Gamma| = \sum_{k=1}^m |e_k| = L >0.
\end{displaymath}
We also assume $q=0$ and consider the standard Laplacian $A_0$. To emphasise the potential dependence of the spectrum on the graph, we will use the following notation for its eigenvalues:
\begin{displaymath}
	0 = \mu_1 (\Gamma) < \mu_2 (\Gamma) \leq \mu_3 (\Gamma) \leq \ldots \to \infty.
\end{displaymath}
We wish to understand how these eigenvalues are related to the geometry (and topology, and metric features) of $\Gamma$, akin to the examples sketched in Section~\ref{sec:gst-domains}. As discussed in Section~\ref{sec:graphs-spectrum}, in general it is essentially impossible to determine the $\mu_k$ explicitly (or even to find a convenient representation of the generally extremely complicated transcendental secular equation that characterises them).

Although the standard Laplacian is considered an analogue of the Neumann Laplacian, there is no version of Theorem~\ref{thm:szego-weinberger} for $\mu_2(\Gamma)$: there exist graphs $\Gamma_m$ such that $|\Gamma_m|=L$ for all $m \in \N$ but $\mu_2 (\Gamma_m) \to \infty$ as $m \to \infty$ (Exercise~3.1). Instead, the fundamental estimate for $\mu_2$ is a \emph{lower} bound reminiscent of the Faber--Krahn inequality for the Dirichlet Laplacian.

\begin{theorem}[Nicaise' inequality]
\label{thm:nicaise}
Let $I=[0,L]$ be an interval of length $L$. Then
\begin{displaymath}
	\mu_2 (\Gamma) \geq \mu_2 (I) = \frac{\pi^2}{L^2},
\end{displaymath}
with equality iff $\Gamma$ is a \emph{path graph}.
\end{theorem}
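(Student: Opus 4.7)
The plan is to apply the rearrangement strategy sketched in Section~\ref{sec:gst-domains} for the Faber--Krahn inequality: take a second eigenfunction $\psi$ of $A_0$ on $\Gamma$, monotonically rearrange it to produce a mean-zero competitor $\tilde\psi$ on $I=[0,L]$, and then plug $\tilde\psi$ into the variational characterisation~\eqref{eq:lambda2-rq-graph} of $\mu_2(I)=\pi^2/L^2$.

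First I would exploit orthogonality: since $\psi$ is orthogonal in $L^2(\Gamma)$ to the constant eigenfunction for $\mu_1=0$, we have $\int_\Gamma \psi\,\textrm{d}x = 0$, so $\psi$ changes sign and decomposes as $\psi = \psi^+ - \psi^-$ with $\psi^\pm \in H^1(\Gamma)$ supported on disjoint subsets $\Gamma^\pm$ of total lengths $L^\pm$ with $L^+ + L^- = L$. The Dirichlet integral decouples, $\int_\Gamma |\psi'|^2\,\textrm{d}x = \int_\Gamma |(\psi^+)'|^2\,\textrm{d}x + \int_\Gamma |(\psi^-)'|^2\,\textrm{d}x$, and $\int_\Gamma \psi^+\,\textrm{d}x = \int_\Gamma \psi^-\,\textrm{d}x$.

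Next I would construct $\tilde\psi$ on $I$ piecewise: on $[0,L^+]$ let $\tilde\psi$ be the monotone decreasing rearrangement of $\psi^+$ onto an interval (taking the value $\max \psi^+$ at $0$ and $0$ at $L^+$), and on $[L^+,L]$ let $-\tilde\psi$ be the monotone increasing rearrangement of $\psi^-$ (taking the value $0$ at $L^+$ and $\max \psi^-$ at $L$). Continuity at $L^+$ is automatic since both pieces vanish there; equimeasurability gives $\|\tilde\psi\|_{L^2(I)} = \|\psi\|_{L^2(\Gamma)}$ and $\int_I \tilde\psi\,\textrm{d}x = 0$; and a Pólya--Szegő-type inequality for metric graphs then yields $\int_0^L |\tilde\psi'|^2\,\textrm{d}x \leq \int_\Gamma |\psi'|^2\,\textrm{d}x$. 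Applying~\eqref{eq:lambda2-rq-graph} on the interval to the mean-zero competitor $\tilde\psi$ gives
\begin{displaymath}
	\mu_2(\Gamma) = \frac{\int_\Gamma |\psi'|^2\,\textrm{d}x}{\int_\Gamma |\psi|^2\,\textrm{d}x}
	\geq \frac{\int_I |\tilde\psi'|^2\,\textrm{d}x}{\int_I |\tilde\psi|^2\,\textrm{d}x}
	\geq \mu_2(I) = \frac{\pi^2}{L^2}.
\end{displaymath}

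The hard part is the Pólya--Szegő inequality on $\Gamma$ together with the equality case. The natural approach is via the coarea formula applied edgewise to $\psi^\pm$: the distribution function $m^+(t) := |\{\psi^+ > t\}|$ has a derivative governed by the level set $\{\psi^+ = t\}$, and a Cauchy--Schwarz estimate on the finitely many preimages yields a bound like $|(m^+)'(t)|^2 \leq \#\{\psi^+=t\} \cdot \sum_{\{\psi^+=t\}} |\psi'|^{-1}$, while the analogous level set of the monotone rearrangement contains exactly one point, so that integrating in $t$ against $|\tilde\psi'|^2$ produces the required inequality. For the equality case, equality throughout forces each super-level set of $\psi^\pm$ to be connected with exactly one boundary point in $\Gamma^\pm$, which, combined with the connectedness of $\Gamma$ and the shape $c\cos(\pi\cdot/L)$ forced on $\tilde\psi$ by equality in~\eqref{eq:lambda2-rq-graph} on $I$, constrains $\Gamma$ to be a path graph.
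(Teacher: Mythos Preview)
Your rearrangement approach is correct in outline but takes a genuinely different route from the paper's first proof. The paper first proves the Dirichlet version (Theorem~\ref{thm:nicaise-dirichlet}) by rearranging the \emph{positive} first Dirichlet eigenfunction onto $[0,L]$, and then deduces Theorem~\ref{thm:nicaise} via Lemma~\ref{lem:nodal}: one nodal domain $\Omega^+$ of $\psi_2$ has length at most $L/2$, so $\mu_2(\Gamma)=\lambda_1(\Omega^+)\geq \pi^2/(4|\Omega^+|^2)\geq \pi^2/L^2$. You instead rearrange $\psi^+$ and $\psi^-$ simultaneously onto the two halves of $I$ to build a single mean-zero competitor and invoke~\eqref{eq:lambda2-rq-graph} directly on $I$, bypassing the Dirichlet problem entirely. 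The paper's route has the advantage of yielding Theorem~\ref{thm:nicaise-dirichlet} as a standalone result and of generalising cleanly to $\mu_k$ (Friedlander's theorem, via nodal counts); yours is more self-contained for $\mu_2$. (The paper also gives three further proofs --- doubling trick, unfolding pendants, and dumbbell interpolation --- none of which resembles your argument.)

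Two small gaps to patch. First, your claim $L^+ + L^- = L$ can fail: there is no unique continuation on graphs (cf.\ Section~\ref{sec:graphs-spectrum}), so $\psi_2$ may vanish identically on an edge, as already happens for a second eigenfunction on the equilateral $3$-star. This is harmless --- extend $\tilde\psi$ by zero on the leftover interval, or simply note that you then land on a shorter interval and get a strictly better bound --- but it should be said. Second, the coarea/Cauchy--Schwarz inequality you wrote is garbled; the correct chain is $|(m^+)'(t)| = \sum_{x\in S_t} |\psi'(x)|^{-1}$ together with $\sum_{x\in S_t} |\psi'(x)| \geq \bigl(\sum_{x\in S_t} |\psi'(x)|^{-1}\bigr)^{-1}$, which is precisely the computation in the Lemma in Section~\ref{sec:nicaise} (applied separately to $\psi^\pm$).
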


(A path graph is a graph consisting of a single edge upon removal of all dummy vertices, cf.\ Remark~\ref{rem:dummy-vertices}, and is thus equivalent to an interval.)

This theorem was originally discovered and proved by Nicaise in 1987 \cite{nicaise}, rediscovered by Friedlander in 2005 (with a different proof) \cite{friedlander}, and then rediscovered again by Kurasov and Naboko in 2014 \cite{kurasov}. The interpretation in terms of diffusion is that, if the graphs are perfectly insulated, so that no heat can escape, then, among all graphs of given total length, the interval exhibits the slowest convergence (in the usual $L^2$-sense) to equilibrium $=$ uniform heat distribution. Actually, there is a corresponding result for discrete graph Laplacians which is even older, going back to the 1970s \cite{fiedler}, see also \cite[Section~2]{bkkm1}.

Nicaise also proved a variant of this inequality in the case where \emph{at least one} vertex of $\Gamma$ is equipped with a Dirichlet vertex condition (see Exercise~2.4), and standard conditions at all others; any such Dirichlet vertex is a ``heat sink'' with perfect cooling to zero. In this case we will write
\begin{displaymath}
	0 < \lambda_1 (\Gamma) < \lambda_2 (\Gamma) \leq \ldots
\end{displaymath}
for the eigenvalues. (It is a short exercise to prove directly that the first eigenvalue is strictly positive, something which also follows immediately from the following theorem.)\footnote{We should, however, point out that the eigenvalues depend not just on $\Gamma$ as a \emph{metric graph} but also on exactly which vertex, or vertices, are being equipped with a Dirichlet vertex, a dependence which is implicitly suppressed in the notation we are using.}

\begin{theorem}[Nicaise, Dirichlet case]
\label{thm:nicaise-dirichlet}
Under our standing assumptions,
\begin{displaymath}
	\lambda_1 (\Gamma) \geq \lambda_1^{ND} (I) = \frac{\pi^2}{4L^2},
\end{displaymath}
the first eigenvalue of an interval of length $L$ with a Neumann condition at one endpoint and a Dirichlet condition at the other.
\end{theorem}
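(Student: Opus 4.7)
The plan is to reduce Theorem~\ref{thm:nicaise-dirichlet} to the standard Nicaise inequality (Theorem~\ref{thm:nicaise}) via a doubling/symmetrisation argument. Since imposing additional Dirichlet conditions at further vertices can only increase $\lambda_1(\Gamma)$ (the form domain shrinks while the form itself is unchanged), I may assume without loss of generality that precisely one vertex $v_0 \in V$ carries the Dirichlet condition, and all other vertices carry standard conditions.

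First, I would construct the \emph{doubled graph} $\widetilde\Gamma$ by taking two isometric copies $\Gamma^+$ and $\Gamma^-$ of $\Gamma$ and gluing them together by identifying the two copies of the Dirichlet vertex $v_0$ (all other vertices remain separate, with standard conditions). Then $\widetilde\Gamma$ is again a connected compact metric graph, its total length is $|\widetilde\Gamma| = 2L$, and every vertex of $\widetilde\Gamma$ carries standard conditions. Let $\psi_1 \in H^1(\Gamma)$ be a first eigenfunction for $\lambda_1(\Gamma)$, so $\psi_1(v_0)=0$. Define $\widetilde\psi : \widetilde\Gamma \to \R$ by taking $\widetilde\psi = \psi_1$ on $\Gamma^+$ and $\widetilde\psi = -\psi_1$ on $\Gamma^-$.

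Next I would verify that $\widetilde\psi$ is an admissible test function in the Rayleigh quotient for $\mu_2(\widetilde\Gamma)$. Continuity at the identified vertex $v_0$ follows from the fact that $\psi_1(v_0)=0$ on both sides; continuity at all other vertices is inherited from the two copies. Hence $\widetilde\psi \in H^1(\widetilde\Gamma) = C(\widetilde\Gamma) \cap \widetilde W^{1,2}(\widetilde\Gamma)$. Moreover, the antisymmetry ensures that $\int_{\widetilde\Gamma} \widetilde\psi \,\textrm{d}x = 0$, so $\widetilde\psi$ is orthogonal to the constant functions, i.e.\ to the eigenfunctions of $\mu_1(\widetilde\Gamma)=0$.

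The final step is a direct computation of the Rayleigh quotient: since both the numerator $\int |\widetilde\psi'|^2\,\textrm{d}x$ and the denominator $\int |\widetilde\psi|^2\,\textrm{d}x$ double when passing from $\Gamma$ to $\widetilde\Gamma$, one has
\begin{equation*}
\mu_2(\widetilde\Gamma) \;\leq\; \frac{\int_{\widetilde\Gamma} |\widetilde\psi'|^2 \,\textrm{d}x}{\int_{\widetilde\Gamma} |\widetilde\psi|^2 \,\textrm{d}x}
\;=\; \frac{\int_\Gamma |\psi_1'|^2 \,\textrm{d}x}{\int_\Gamma |\psi_1|^2 \,\textrm{d}x}
\;=\; \lambda_1(\Gamma),
\end{equation*}
using Remark~\ref{rem:standard-laplace-spectrum} and the variational characterisation \eqref{eq:lambda2-rq-graph}. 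Combined with Theorem~\ref{thm:nicaise} applied to $\widetilde\Gamma$, which gives $\mu_2(\widetilde\Gamma) \geq \pi^2/|\widetilde\Gamma|^2 = \pi^2/(4L^2)$, this yields the desired bound $\lambda_1(\Gamma) \geq \pi^2/(4L^2)$.

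The main subtlety, and the step I would want to check most carefully, is that $\widetilde\psi$ is a \emph{bona fide} $H^1$-function on $\widetilde\Gamma$ despite the sign flip across $v_0$ — that is what makes the cancellation $\psi_1(v_0)=0$ essential. Note that I do not actually need $\widetilde\psi$ to be an eigenfunction on $\widetilde\Gamma$ (although it is, with eigenvalue $\lambda_1(\Gamma)$, since the antisymmetry also enforces the Kirchhoff condition at $v_0$); the weaker membership in $H^1(\widetilde\Gamma)$ together with mean zero is all that the variational argument requires.
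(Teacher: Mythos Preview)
Your argument is correct, but it is genuinely different from the paper's. The paper proves Theorem~\ref{thm:nicaise-dirichlet} directly via Friedlander's symmetrisation: one takes the decreasing rearrangement $\psi_1^\ast$ of the first eigenfunction onto $[0,L]$, checks that $\|\psi_1^\ast\|_2 = \|\psi_1\|_2$ by Cavalieri, and proves $\|(\psi_1^\ast)'\|_2 \leq \|\psi_1'\|_2$ using the coarea formula; then $\psi_1^\ast$ is a valid test function for $\lambda_1^{ND}([0,L])$. Only \emph{afterwards} is Theorem~\ref{thm:nicaise} deduced, by applying Theorem~\ref{thm:nicaise-dirichlet} to a nodal domain of $\psi_2$. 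Your route reverses the logical dependence: you reflect $\Gamma$ across the Dirichlet vertex, extend $\psi_1$ antisymmetrically, and invoke Theorem~\ref{thm:nicaise} on the doubled graph.

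Two remarks. First, be aware of the potential circularity: the paper's \emph{first} proof of Theorem~\ref{thm:nicaise} uses Theorem~\ref{thm:nicaise-dirichlet}, so your argument only stands on its own if you appeal to one of the independent proofs of Theorem~\ref{thm:nicaise} (the Eulerian-cycle doubling trick of Section~\ref{sec:surgery}, or the unfolding argument). Second, what you lose relative to the paper's approach is extensibility: the symmetrisation machinery is what drives Friedlander's generalisation to $\mu_k$, whereas your reflection trick is tailored to $\lambda_1$. What you gain is a short, soft argument that bypasses the coarea computation entirely.
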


\textbf{Proof 1: Symmetrisation} (Friedlander's strategy)

Denote by $\psi_1$ an eigenfunction corresponding to $\lambda_1 (\Gamma)$; it may be shown that $\psi_1$ can be chosen strictly positive except at the Dirichlet vertices.\footnote{There is a minor technical assumption embedded in this assertion, which we will studiously ignore: we are assuming that $\Gamma$ is still connected \emph{after removal of all Dirichlet vertices}. If not, $\psi_1$ will generally be supported on one connected component of this punctured graph and be identically zero on all others.} Construct a \emph{symmetrised} (or \emph{rearranged}) function $\psi_1^\ast$ on $I=[0,L]$ as follows.

We first define the \emph{upper level sets} of $\psi_1$ by
\begin{displaymath}
	U_t := \{ x \in \Gamma : \psi_1(x) > t\}, \qquad t\geq 0;
\end{displaymath}
then $t \mapsto |U_t|$ (the total length of $U_t$) is a monotonically decreasing function from $L$ at $t=0$ to $0$ at $t = M:= \max_{x \in \Gamma} \psi_1(x)$. We will also denote by
\begin{displaymath}
	S_t := \{ x \in \Gamma : \psi_1 (x) = t\}, \qquad t \geq 0,
\end{displaymath}
the corresponding ``level surfaces'', $S_t = \partial U_t$, which in reality will generally be finite sets of points. (Indeed, since on each edge $\psi_1$ is a solution of $-\psi'' = \lambda \psi$ and thus a linear combination of sines and cosines, unless it is identically zero on an edge it only takes on any value a finite number of times. Thus each $S_t$ is indeed finite except possibly for $S_0$; what happens with $S_0$ depends on the minor technical assumption mentioned in the previous footnote.)

We define a function $\psi_1^\ast : [0,L] \to [0,M]$, the \emph{decreasing rearrangement} of $\psi_1$, by the rule
\begin{displaymath}
	\psi_1^\ast (x) := t \qquad \text{iff} \qquad x = |U_t|, \, x \in [0,L].
\end{displaymath}
It is defined in such a way that its upper level sets have the same total length as the upper level sets of $\psi_1$:
\begin{equation}
\label{eq:cavalieri}
	|U_t^\ast| = |\{y \in [0,L]: \psi_1^\ast (y) > t\}| = x = |U_t| = |\{y \in \Gamma: \psi_1(y) > t\}|.
\end{equation}
Thus $\psi_1^\ast$ is monotonically decreasing, with $\psi_1^\ast (0)=M$ and $\psi_1^\ast (L) = 0$. See Figure~\ref{fig:rearrangement}.

\begin{figure}[ht]
\centering
\begin{tikzpicture}[scale=1.3]
\draw[thick,->] (-0.75,0) -- (4.25,0);
\draw[thick,->] (0,-0.5) -- (0,2.5);
\draw[thick] (0,1.7) .. controls (1.5,0.8) .. (2.5,0.7) .. controls (2.7,0.6) .. (3,0);
\node at (0,1.7) [anchor=east] {$M$};
\node at (0,0) [anchor=north east] {$0$};
\node at (3,0) [anchor=north] {$L$};
\node at (2.7,0.6) [anchor=south west] {$\psi_1^\ast$};
\node at (1.5,0) [anchor=north] {$x = |U_t|$};
\node at (0,0.865) [anchor=east] {$t$};
\draw[thick,dotted] (1.5,0) -- (1.5,0.865) -- (0,0.865);
\end{tikzpicture}
\caption{The rearranged function $\psi_1^\ast$ on $[0,L]$.}
\label{fig:rearrangement}
\end{figure}
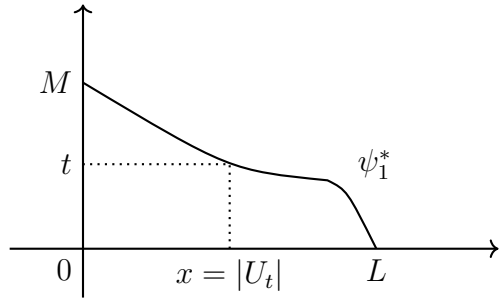

A technical lemma (which we will omit) shows that $\psi_1^\ast \in H^1(0,L)$, while Cavalieri's principle (using \eqref{eq:cavalieri}) implies that
\begin{displaymath}
	\|\psi_1^\ast\|_{L^p([0,L])} = \|\psi_1\|_{L^p(\Gamma)}
\end{displaymath}
for all $p \in [1,\infty)$, and in particular for $p=2$.

\begin{lemma}
We have
\begin{displaymath}
	\int_\Gamma |\psi_1'|^2\,\textrm{d}x \geq \int_0^L |(\psi_1^\ast)'|^2\,\textrm{d}x.
\end{displaymath}
\end{lemma}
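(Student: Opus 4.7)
The plan is to prove the inequality using an edgewise one-dimensional coarea formula on $\Gamma$ together with a Cauchy--Schwarz inequality on each level set, and to compare the resulting expression with the analogous (one-term) coarea expression for the monotone rearrangement $\psi_1^\ast$ on $[0,L]$. The reduction in the Dirichlet energy will come entirely from the fact that level sets $S_t$ on $\Gamma$ generically contain more than one point, whereas on $[0,L]$ they contain exactly one.

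First I would introduce the distribution function $m(t) := |U_t|$ for $t \in [0,M]$; by \eqref{eq:cavalieri} this coincides with the distribution function of $\psi_1^\ast$. Since $\psi_1$ solves $-\psi_1'' = \lambda_1 \psi_1$ edgewise and is strictly positive away from the Dirichlet vertices, it is real-analytic and non-constant on each edge, so its critical set $\{\psi_1'=0\}$ is finite on each edge, its critical values form a measure-zero subset of $(0,M)$, and $m$ is absolutely continuous. Writing $N(t) := \# S_t$, two applications of the edgewise coarea formula (to $\psi_1$ with integrands $1/|\psi_1'|$ and $|\psi_1'|$, excluding the measure-zero set of critical values) give, for almost every $t \in (0,M)$,
$$-m'(t) = \sum_{x \in S_t} \frac{1}{|\psi_1'(x)|} \qquad \text{and} \qquad \int_\Gamma |\psi_1'|^2\,\textrm{d}x = \int_0^M \sum_{x \in S_t} |\psi_1'(x)|\,\textrm{d}t.$$
The Cauchy--Schwarz inequality on the finite set $S_t$ then yields
$$N(t)^2 \;\leq\; \Bigl(\sum_{x \in S_t} |\psi_1'(x)|\Bigr)\Bigl(\sum_{x \in S_t} \tfrac{1}{|\psi_1'(x)|}\Bigr) \;=\; (-m'(t)) \sum_{x \in S_t} |\psi_1'(x)|,$$
and therefore
$$\int_\Gamma |\psi_1'|^2\,\textrm{d}x \;\geq\; \int_0^M \frac{N(t)^2}{-m'(t)}\,\textrm{d}t.$$

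I would then run the same calculation on $[0,L]$ for $\psi_1^\ast$. Since $\psi_1^\ast$ is strictly monotone decreasing from $M$ to $0$, its level sets $S_t^\ast$ have cardinality exactly one for almost every $t$, and its distribution function equals $m$. The analogous two applications of the coarea formula give $\int_0^L |(\psi_1^\ast)'|^2\,\textrm{d}x = \int_0^M 1/(-m'(t))\,\textrm{d}t$. Combining this with the previous inequality and using $N(t) \geq 1$ almost everywhere in $(0,M)$ then gives the claim. The main technical obstacle is the justification of the coarea formula near the vertices and the absolute continuity of $m$; both are handled by noting that vertices form a $\mu$-null set in $\Gamma$, that $\psi_1$ is real-analytic and non-constant on each edge (so has only finitely many critical points per edge), and that $\psi_1^\ast \in H^1(0,L)$ by the preceding (unstated) lemma alluded to in the text.
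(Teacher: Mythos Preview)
Your proof is correct and follows essentially the same route as the paper: both apply the one-dimensional coarea formula to rewrite $\int_\Gamma |\psi_1'|^2$ as $\int_0^M \sum_{x\in S_t}|\psi_1'(x)|\,\textrm{d}t$, use the identity $\sum_{x\in S_t}1/|\psi_1'(x)|=-m'(t)=1/|(\psi_1^\ast)'(y_t)|$, and then bound $\bigl(\sum|\psi_1'|\bigr)\bigl(\sum 1/|\psi_1'|\bigr)\geq 1$. The only cosmetic difference is that you phrase this last step via Cauchy--Schwarz (yielding the sharper constant $N(t)^2$) before discarding to $N(t)\geq 1$, whereas the paper obtains $\#S_t\geq 1$ directly from the diagonal terms of the double sum; the bound actually used is identical.
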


\begin{proof}
The proof, which can comfortably be omitted on a first reading, relies on the coarea formula, a kind of a change of variable formula, which in dimension one is as follows: for any suitable function $\varphi$ (piecewise continuous and integrable is sufficient),
\begin{displaymath}
	\int_\Gamma \varphi(x) |\psi_1'(x)|\,\textrm{d}x = \int_0^M \left(\sum_{x \in S_t} \varphi(x)\right)\,\textrm{d}t,
\end{displaymath}
part of the assertion being that the integrand on the right-hand side is in fact integrable. Before we proceed, we also make two observations: firstly,
\begin{displaymath}
	\sum_{x \in S_t} |\psi_1'(x)| \geq \left(\sum_{x \in S_t} \frac{1}{|\psi_1'(x)|}\right)^{-1}
\end{displaymath}
almost everywhere (also noting that $\psi_1'(x)$ can vanish or be undefined only on a null set), since
\begin{displaymath}
	\sum_{x \in S_t} \frac{1}{|\psi_1'(x)|} \sum_{y \in S_t} |\psi_1'(y)| %\geq \sum_{x,y\in S_t} \frac{1}{|\psi_1'(y)|} 
	\geq\sum_{x \in S_t} 1 = \# S_t \geq 1.
\end{displaymath}
secondly, if $y_t \in [0,L]$ is the unique point such that $\psi_1^\ast (y_t) = t$, i.e. $\{y_t\} = S_t^\ast = \partial U_t^\ast$, then the relation $|U_t| = |U_t^\ast|$ implies
\begin{displaymath}
	\sum_{x \in S_t} \frac{1}{|\psi_1'(x)|} = \sum_{y \in S_t^\ast} \frac{1}{|(\psi_1^\ast)'(y)|} = \frac{1}{|(\psi_1^\ast)'(y_t)|}
\end{displaymath}
for almost all $t \in [0,M]$. With these two observations, we can give the main calculation:
\begin{displaymath}
\begin{aligned}
	\int_\Gamma |\psi_1'(x)|^2\,\textrm{d}x
	&= \int_0^M \sum_{x \in S_t} |\psi_1'(x)|\,\textrm{d}t\\
	&\geq \int_0^M \frac{1}{\sum_{x \in S_t}\frac{1}{|\psi_1'(x)|}}\,\textrm{d}t\\
	&= \int_0^M \frac{1}{1/|(\psi_1^\ast)'(y_t)|}\,\textrm{d}t\\
	&= \int_0^M |(\psi_1^\ast)'(y_t)|\,\textrm{d}t = \int_0^L |(\psi_1^\ast)'(y)|^2\,\textrm{d}y,
\end{aligned}
\end{displaymath}
where the first line is the coarea formula, the second follows from the first observation, the third from the second observation, and the final line is another application of the coarea formula.
\end{proof}

The proof of Theorem~\ref{thm:nicaise-dirichlet} now follows the same reasoning as the theorem of Faber--Krahn in $\R^d$: since $\psi_1^\ast (L)=0$,
\begin{displaymath}
	\lambda_1 (\Gamma) = \frac{\int_\Gamma |\psi_1'|^2\,\textrm{d}x}{\int_\Gamma |\psi_1|^2\,\textrm{d}x}
	\geq \frac{\int_0^L |(\psi_1^\ast)'|^2\,\textrm{d}x}{\int_0^L |\psi_1^\ast|^2\,\textrm{d}x}
	\geq \inf_{\substack{u \in H^1(0,L)\\ u(L)=0}} \frac{\int_0^L |u'|^2\,\textrm{d}x}{\int_0^L |u|^2\,\textrm{d}x} = \lambda_1^{ND}(0,L)
	=\frac{\pi^2}{4L^2}.
\end{displaymath}
To prove Theorem~\ref{thm:nicaise} using this approach, we apply Theorem~\ref{thm:nicaise-dirichlet} to the \emph{nodal domains} of the second eigenfunctions.

\begin{definition}
Let $\varphi \in C(\Gamma)$. We call
\begin{enumerate}
\item[\textbf{(a)}] the closed set $N(\varphi) := \{x \in \Gamma: \varphi(x)=0\}$ the \emph{nodal set} of $\varphi$;
\item[\textbf{(b)}] $\Omega \subset \Gamma$ a \emph{nodal domain} if it is the closure of a connected component of $\Gamma \setminus N(\varphi)$.
\end{enumerate}
\end{definition}

(The decision to take $\Omega$ as \emph{closed} is somewhat non-standard and made for technical reasons. In general, on domains and manifolds, the nodal domains of a function $\varphi$ are usually the (open) connected components of the complement of the nodal set.) Note that $\varphi(x) = 0$ for all $x \in \partial\Omega \subset \Gamma$, and $\varphi$ does not change sign on $\Omega$.

\begin{example}
If $\Gamma = I = [0,L]$ is just a bounded interval, and $\psi_k \sim \lambda_k$ is a $k$th eigenfunction of $A_q$ on $I$ ($q \in L^\infty(I)$), then by Sturm--Liouville theory, $\psi_k$ has exactly $k-1$ zeros in the interior of $I$, and thus exactly $k$ nodal domains $\Omega_1,\ldots,\Omega_k$.

As a more explicit example, if $q=0$ and $k=2$, then $\psi_2(x) = A\cos (\frac{\pi x}{L})$ is an eigenfunction for $\lambda_2 = \frac{\pi^2}{L^2}$; $N(\psi_2) = \{\frac{L}{2}\}$ (independently of $A \neq 0$), and so $\Omega_1 = [0,\frac{L}{2}]$, $\Omega_2 = [\frac{L}{2},L]$.
\end{example}

\begin{remark}
If $k \geq 2$, then $\psi_k \sim \mu_k(\Gamma)$ changes sign on $\Gamma$ (since it is orthogonal in $L^2(\Gamma)$ to the function $\psi_1$, which is positive everywhere). In particular, if $k=2$, then there exist nodal domains $\Omega^+$ and $\Omega^-$ where $\psi_k$ is positive and negative, respectively.
\end{remark}

This principle applies far more generally than just on graphs. The analysis of the number of nodal domains of the eigenfunctions as a function of $k$ (be it on a Euclidean domain, a manifold, or a graph $\Gamma$) is a longstanding area of investigation:
\begin{itemize}
\item Sturm: in 1D, $\psi_k$ has \emph{exactly} $k$ nodal domains.
\item Courant: in $\Omega \subset \R^d$, $\psi_k$ has $\leq k$ nodal domains.
\item Pleijel: in $\Omega \subset \R^d$,
\begin{displaymath}
	\limsup_{k \to \infty} \frac{\# \text{nodal domains of }\psi_k}{k} < 1
\end{displaymath}
(this uses Faber--Krahn, Theorem~\ref{thm:faber-krahn}), see \cite{pleijel}.
\item On graphs: Alon, Band, Berkolaiko, Smilansky among others: for ``most'' graphs (more precisely, those for which the eigenfunctions never vanish on any essential vertex),
\begin{displaymath}
	k - (\#E - \#V + 1) \leq \#\text{nodal domains of }\psi_k \leq k
\end{displaymath}
(the number $\#E - \#V + 1$ is known as the (first) \emph{Betti number} of $\Gamma$, and is equal to the number of independent cycles in $\Gamma$). This is a topic of ongoing study; see, for example, \cite{alon,berkolaiko08}. The situation is more complicated if one or more $\psi_k$ vanish at one or more essential vertices, see Exercise~3.1 and \cite{hofmann21}.
\end{itemize}

\begin{lemma}
\label{lem:nodal}
Suppose $\psi_k$ is an eigenfunction for $\mu_k(\Gamma)$ on $\Gamma$ and $\Omega \subset \Gamma$ is any nodal domain of $\psi_k$. Denote by $\lambda_1 (\Omega)$ the first eigenvalue of the Laplacian on $\Omega$ with Dirichlet conditions on $\partial\Omega$ and standard conditions at all other vertices (see Exercise~2.4), i.e.
\begin{displaymath}
	\lambda_1 (\Omega) = \inf_{\substack{0 \neq u \in H^1(\Omega)\\ u(x)=0\,\forall x \in \partial\Omega}}
	\frac{\int_\Omega |u'|^2+q|u|^2\,\textrm{d}x}{\int_\Omega |u|^2\,\textrm{d}x}.
\end{displaymath}
Then $\mu_k (\Gamma) = \lambda_1 (\Omega)$.
\end{lemma}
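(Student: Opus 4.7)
The plan is to show that $\psi_k|_\Omega$ is itself an admissible function for the variational problem defining $\lambda_1(\Omega)$, that its Rayleigh quotient on $\Omega$ equals $\mu_k(\Gamma)$, and then to use the sign-definiteness of $\psi_k$ on $\Omega$ together with a Perron--Frobenius style argument to conclude equality.

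First I would check that $\psi_k|_\Omega$ lies in the admissible class: since $\psi_k \in D(A_0) \subset H^1(\Gamma)$, the restriction is in $H^1(\Omega)$; by definition of a nodal domain, $\psi_k$ vanishes on $\partial\Omega$; and $\psi_k$ satisfies the standard conditions at \emph{every} vertex of $\Gamma$, hence in particular at all vertices of $\Omega$ that are not in $\partial\Omega$. Next, integration by parts on each edge of $\Omega$ yields, for any test function $v \in H^1(\Omega)$ with $v=0$ on $\partial\Omega$,
\begin{displaymath}
    \int_\Omega \psi_k' v'\,\textrm{d}x
    = -\int_\Omega \psi_k'' v \,\textrm{d}x + \sum_{v_j \in V \cap \Omega} v(v_j) \sum_{\substack{e \sim v_j\\ e \subset \Omega}} \partial_\nu \psi_k|_e (v_j).
\end{displaymath}
At a vertex $v_j \in \partial\Omega$ the factor $v(v_j)$ vanishes; at an interior vertex of $\Omega$, $\psi_k$ does not vanish in a neighbourhood, so every edge at $v_j$ lies in $\Omega$ and the Kirchhoff condition for $\psi_k$ on the full graph $\Gamma$ forces the inner sum to vanish. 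Combined with $-\psi_k'' = \mu_k(\Gamma)\psi_k$ edgewise, this gives
\begin{displaymath}
    \int_\Omega \psi_k' v' \,\textrm{d}x = \mu_k(\Gamma) \int_\Omega \psi_k v \,\textrm{d}x,
\end{displaymath}
so $\psi_k|_\Omega$ is a (weak) eigenfunction of the problem defining $\lambda_1(\Omega)$ with eigenvalue $\mu_k(\Gamma)$. Taking $v = \psi_k|_\Omega$ and invoking the Courant--Fischer characterisation already gives $\lambda_1(\Omega) \leq \mu_k(\Gamma)$.

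For the reverse inequality I would use that $\psi_k$ has constant sign on $\Omega$ by the very definition of a nodal domain, so $\psi_k|_\Omega$ is a non-negative (up to sign) eigenfunction of the Dirichlet problem on $\Omega$. By the Perron--Frobenius principle quoted in the introduction (applied to the operator on $\Omega$, which satisfies the same form framework as $A_0$ but with the Dirichlet conditions added at $\partial\Omega$), the only eigenvalue admitting a strictly sign-definite eigenfunction is the first one; in particular, any eigenfunction associated to a higher eigenvalue must change sign. Hence $\mu_k(\Gamma)$ must coincide with $\lambda_1(\Omega)$.

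The main obstacle I anticipate is a careful bookkeeping at the vertices of $\Omega$: one must rule out awkward configurations where $\psi_k$ vanishes at an essential vertex $v_j$ without all of its incident edges being cut off from $\Omega$, and verify that the integration by parts and Kirchhoff identities still produce zero boundary contributions in such cases. This is handled by observing that $\psi_k(v_j)=0$ automatically kills the boundary term, regardless of whether some edges at $v_j$ are truncated by nodal points in their interiors; any such truncation produces further pieces of $\partial\Omega$ at which $v$ is required to vanish, so the boundary sum collapses cleanly.
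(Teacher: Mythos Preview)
Your proof is correct and follows essentially the same strategy as the paper's: show that $\psi_k|_\Omega$ is an eigenfunction of the Dirichlet problem on $\Omega$ with eigenvalue $\mu_k(\Gamma)$, then invoke Perron--Frobenius (sign-definiteness of the ground state) to force this eigenvalue to be $\lambda_1(\Omega)$. The paper argues more tersely, simply asserting that $\psi_k$ ``satisfies all vertex conditions in $\Omega$ by construction'' and is therefore an eigenfunction for some $\lambda_j(\Omega)$; your explicit integration-by-parts computation and the careful case analysis at interior versus boundary vertices of $\Omega$ fill in exactly the bookkeeping the paper omits.
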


\begin{proof}
%Being an eigenfunction on $\Gamma$, $\psi_k$ satisfies the weak form of the eigenvalue equation (cf.\ Theorem~\ref{thm:spectral}),
%\begin{displaymath}
%	\int_\Gamma \psi_k'\, \varphi' \,\textrm{d}x = \mu_k \int_\Gamma \psi_k\varphi \,\textrm{d}x
%\end{displaymath}
%for all $\varphi \in H^1(\Gamma)$. Choose the function $\varphi := \psi_k \cdot \chi_\Omega$, i.e. $\varphi(x) = \psi_k(x)$ if $x \in \Omega$ and $0$ otherwise. A technical result states that $\varphi \in H^1(\Gamma)$ still, and obviously $\varphi = 0$ outside $\interior \Omega$. Hence
%\begin{displaymath}
%	\int_\Omega |\psi_k'|^2+q|\psi_k|^2\,\textrm{d}x = \mu_k \int_\Omega |\psi_k|^2\,\textrm{d}x;
%\end{displaymath}
%thus, since $\psi_k|_{\partial\Omega} = 0$,
%\begin{displaymath}
%	\mu_k = \frac{\int_\Omega |\psi_k'|^2\,\textrm{d}x}{\int_\Omega |\psi_k|^2\,\textrm{d}x} \geq \lambda_1 (\Omega).
%\end{displaymath}
%On the other hand, $\psi_k$ satisfies the eigenvalue equation pointwise in $\Omega$ as well as all vertex conditions; thus it is equal to \emph{some} eigenfunction on $\Omega$; in particular, $\mu_k = \lambda_j (\Omega)$ for som $k\geq 1$.
%
%Now since $\psi_k$ does not change sign in $\Omega$, and $\lambda_1(\Omega)$ is the only eigenvalue on $\Omega$ with a non-sign-changing eigenfunction, we must have $j=1$.
By assumption $\psi_k$ satisfies the eigenvalue equation $-\psi_k = \mu_k \psi_k$ strongly (indeed pointwise) in $\Gamma$, and hence in $\Omega$. But it also satisfies all vertex conditions in $\Omega$ by construction. Thus, it is equal to \emph{some} eigenfunction on $\Omega$; in particular, $\mu_k = \lambda_j (\Omega)$ for some $k\geq 1$.

Now since $\psi_k$ does not change sign in $\Omega$, and $\lambda_1(\Omega)$ is the only eigenvalue on $\Omega$ with a non-sign-changing eigenfunction, we must have $j=1$.
\end{proof}

\begin{proof}[First proof of Theorem~\ref{thm:nicaise}]
Fix an eigenfunction $\psi_2 \sim \mu_2 (\Gamma)$, then $\psi_2$ has (at least) two nodal domains $\Omega^+$, $\Omega^-$; at least one of them, say $\Omega^+$, has total length $\leq L/2$. Since $\partial\Omega^+ \neq \emptyset$, i.e. $\Omega^+$ has at least one Dirichlet vertex, by Lemma~\ref{lem:nodal} and Theorem~\ref{thm:nicaise-dirichlet},
\begin{displaymath}
	\mu_2 (\Gamma) = \lambda_1 (\Omega^+) \geq \frac{\pi^2}{4|\Omega^+|^2} \geq \frac{\pi^2}{L^2}.
\end{displaymath}
\end{proof}

The symmetrisation method allows a generalisation of the inequality to the higher eigenvalues:

\begin{theorem}[Friedlander]
For all $k \geq 1$,
\begin{displaymath}
	\mu_k (\Gamma) \geq \frac{\pi^2 (k-1)^2}{L^2}.
\end{displaymath}
The minimising graph is an \emph{equilateral $k$-star}.
\end{theorem}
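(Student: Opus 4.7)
The plan is to generalise the symmetrisation argument underlying the proof of Nicaise's inequality (Theorem~\ref{thm:nicaise-dirichlet}) to higher eigenvalues via the Courant--Fischer min-max formula (Theorem~\ref{thm:courant-fischer}). Since $\mu_k([0,L]) = \pi^2(k-1)^2/L^2$ on the interval of length $L$ with standard conditions, it is enough to produce a $k$-dimensional subspace of $H^1([0,L])$ every nonzero element of which has Rayleigh quotient at most $\mu_k(\Gamma)$.

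I would take $E_k := \spa\{\psi_1,\ldots,\psi_k\} \subset H^1(\Gamma)$, so that
$$\frac{\int_\Gamma |f'|^2\,\textrm{d}x}{\int_\Gamma |f|^2\,\textrm{d}x} \leq \mu_k(\Gamma) \qquad \text{for every nonzero } f \in E_k,$$
and, for each such $f$, construct a \emph{signed decreasing rearrangement} $f^\ast \in H^1([0,L])$ as follows: let $\Omega_1,\ldots,\Omega_m$ be the nodal domains of $f$; on each $\Omega_i$, apply the decreasing rearrangement to $|f|$ restricted to $\Omega_i$ exactly as in the proof of Theorem~\ref{thm:nicaise-dirichlet}, obtaining a monotone function on an interval of length $|\Omega_i|$; then glue these monotone pieces into a function on $[0,L]$, alternating orientations and signs so that zeros align at the joins and $f^\ast$ replicates the sign pattern of $f$. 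Cavalieri's principle gives $\|f^\ast\|_{L^2([0,L])} = \|f\|_{L^2(\Gamma)}$, while a nodal-domain-wise application of the P\'olya--Szeg\H{o} inequality from the lemma inside the proof of Theorem~\ref{thm:nicaise-dirichlet} yields
$$\int_0^L |(f^\ast)'|^2 \,\textrm{d}x \leq \int_\Gamma |f'|^2\,\textrm{d}x,$$
so the Rayleigh quotient of $f^\ast$ on $[0,L]$ is at most $\mu_k(\Gamma)$.

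The remaining step is to turn the (nonlinear) assignment $f \mapsto f^\ast$ into an honest $k$-dimensional subspace of $H^1([0,L])$: choose a basis of $E_k$ adapted to the nodal structure of $\psi_k$, and verify that the rearrangements $\psi_1^\ast,\ldots,\psi_k^\ast$ remain linearly independent (for instance by tracking their sign patterns and nodal counts). Applying Theorem~\ref{thm:courant-fischer} on $[0,L]$ then yields $\mu_k([0,L]) \leq \mu_k(\Gamma)$, as required. For the equality case, equality in each P\'olya--Szeg\H{o} step forces $|\psi_k|$ to have singleton level sets on every nodal domain, which combined with the standard vertex conditions at the joins pins the extremiser down to the equilateral $k$-star (i.e.\ a star with $k-1$ pendant edges of equal length $L/(k-1)$).

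The main obstacle is precisely the linearity/injectivity issue: since decreasing rearrangement is nonlinear, one cannot simply transport $E_k$ through $f \mapsto f^\ast$ to obtain a $k$-dimensional subspace of $H^1([0,L])$, and a priori distinct eigenfunctions could rearrange to the same function on the interval. Friedlander's contribution was essentially to handle this obstruction by tracking the nodal structure across $E_k$ carefully enough to guarantee linear independence of the rearrangements; in more recent treatments (e.g.\ by Kurasov--Naboko) this step is sidestepped altogether in favour of directly constructed test functions on $[0,L]$.
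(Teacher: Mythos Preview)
Your proposed resolution of the ``main obstacle'' does not work. Even if $\psi_1^\ast,\ldots,\psi_k^\ast$ are linearly independent, the min-max principle requires a bound on the Rayleigh quotient of \emph{every} nonzero element of their span, not just of the basis vectors. Since rearrangement is nonlinear, $(\sum_i c_i\psi_i)^\ast \neq \sum_i c_i\psi_i^\ast$ in general, and there is no reason why an arbitrary linear combination $\sum_i c_i\psi_i^\ast$ should have Rayleigh quotient at most $\mu_k(\Gamma)$. So the step ``Applying Theorem~\ref{thm:courant-fischer} on $[0,L]$ then yields $\mu_k([0,L]) \leq \mu_k(\Gamma)$'' is unjustified, and tracking sign patterns or nodal counts of the $\psi_i^\ast$ cannot repair this.

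The paper's (admittedly terse) sketch, following Friedlander, avoids the subspace route altogether: one looks for a \emph{single} function $\psi\in\spa\{\psi_1,\ldots,\psi_k\}$ with the right nodal structure, and symmetrises only that one function. The actual mechanism in Friedlander's paper is a topological one: the signed rearrangement $\psi\mapsto\psi^\ast$ is odd, so the map from the unit sphere of $\spa\{\psi_1,\ldots,\psi_k\}\cong S^{k-1}$ to $\R^{k-1}$ sending $\psi$ to the vector of inner products $(\langle\psi^\ast,\varphi_j\rangle_{L^2(0,L)})_{j=1}^{k-1}$ (with $\varphi_j$ the first $k-1$ Neumann eigenfunctions on $[0,L]$) is odd and hence, by Borsuk--Ulam, has a zero. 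For that $\psi$ one has $\psi^\ast\perp\varphi_1,\ldots,\varphi_{k-1}$, whence $\mu_k([0,L])\leq R(\psi^\ast)\leq R(\psi)\leq\mu_k(\Gamma)$. This is the missing idea. (Incidentally, your reference to Kurasov--Naboko is off: their doubling trick handles only $k=2$ and does not address the higher eigenvalues.)
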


\begin{figure}[H]
\centering
\begin{tikzpicture}
\draw[thick] (-4,0) -- (-2,0);
\draw[fill] (-4,0) circle (1.5pt);
\draw[fill,white] (-2,0) circle (1.5pt);
\draw (-2,0) circle (1.5pt);
\node at (-4,0) [anchor=north] {$N$};
\node at (-2,0) [anchor=north] {$D$};
\node at (-3,0) [anchor=south] {$L/k$};
\draw[-{Stealth[scale=0.5,angle'=60]},line width=2.5pt] (-0.75,0) -- (0.75,0);
\node at (0,0.2) [anchor=south] {$k$ copies};
\draw[thick] (2,0) -- (6,0);
\draw[thick] (4,2) -- (4,-2);
\draw[thick] (5.414,1.414) -- (2.586,-1.414);
\draw[thick] (2.586,1.414) -- (5.414,-1.414);
\draw[fill,white] (4,0) circle (1.5pt);
\draw (4,0) circle (1.5pt);
\draw[fill] (2,0) circle (1.5pt);
\draw[fill] (6,0) circle (1.5pt);
\draw[fill] (4,2) circle (1.5pt);
\draw[fill] (4,-2) circle (1.5pt);
\draw[fill] (5.414,1.414) circle (1.5pt);
\draw[fill] (2.586,-1.414) circle (1.5pt);
\draw[fill] (2.586,1.414) circle (1.5pt);
\draw[fill] (5.414,-1.414) circle (1.5pt);
\end{tikzpicture}
\end{figure}

\begin{proof}[Rough idea of proof]
If $\psi_k$ has $k$ nodal domains, then at least one has length $\leq L/k$; apply Theorem~\ref{thm:nicaise-dirichlet}. Otherwise, there exists a linear combination of $\psi_1,\ldots,\psi_k$ which does have $k$ nodal domains. Apply the symmetrisation technique to one of these.
\end{proof}

\begin{remark}
We note in passing the high multiplicity of the minimising graph (cf.\ Exercise~3.1): $0 = \mu_1 (\Gamma) < \mu_2 (\Gamma) = \ldots = \mu_k (\Gamma) < \mu_{k+1} (\Gamma)$ for the equilateral $k$-star $\Gamma$. This is to be compared with the numerically observed property of domains in $\R^2$ which minimise $\lambda_k (\Omega)$ \cite{antunes}: there, numerical evidence strongly suggests that the multiplicity of $\lambda_k$ for the optimal domain grows with $k$, but there is no known proof.

On the other hand, the values in Friedlander's theorem diverge sharply from the values of $\mu_k (\Gamma)$ given by Weyl's law for any \emph{fixed} graph $\Gamma$. This is starkly at odds with \emph{P\'olya's conjecture} (see, e.g. \cite{colbois}) for the Dirichlet and Neumann Laplacian eigenvalues, which posits that the first term in the Weyl asymptotics is always a lower bound for the Dirichlet eigenvalues and an upper bound for the Neumann eigenvalues, on any domain. It seems likely that this is related to the failure of the unique continuation principle mentioned in Section~\ref{sec:graphs-spectrum} and to the nodal domain counts explored in \cite{hofmann21}.
\end{remark}

\subsection{Surgery}
\label{sec:surgery}

\textbf{Second proof of Theorem~\ref{thm:nicaise}: the ``doubling trick''} (Nicaise/Kurasov--Naboko)

The original proof given by Nicaise was based on the idea of finding a \emph{double covering} of the graph $\Gamma$. The following version of the proof, given by Kurasov and Naboko \cite{kurasov}, is more explicit and elegantly uses a classical theorem of Euler.

Given $\Gamma$, construct a new, ``doubled'' graph $\Gamma_2$ by replacing each edge $e$ of $\Gamma$ with two identical parallel edges, each of the same length as $e$. Then $|\Gamma_2| = 2L$.

\noindent Duplicate the eigenfunction $\psi_2 \in H^1 (\Gamma)$ on each edge to create a new function $\tilde \psi_2 \in\nolinebreak H^1(\Gamma_2)$; then by construction
\begin{displaymath}
\begin{aligned}
	\int_{\Gamma_2} |\tilde\psi_2'|^2\,\textrm{d}x &= 2\int_\Gamma |\psi_2'|^2\,\textrm{d}x\\
	\int_{\Gamma_2} |\tilde\psi_2|^2\,\textrm{d}x &= 2\int_\Gamma |\psi_2|^2\,\textrm{d}x\\
	\int_{\Gamma_2} \tilde\psi_2\,\textrm{d}x &= 2\int_\Gamma \psi_2\,\textrm{d}x = 0,
\end{aligned}
\end{displaymath}
and so
\begin{displaymath}
	\mu_2 (\Gamma) = \frac{\int_\Gamma |\psi_2'|^2\,\textrm{d}x}{\int_\Gamma |\psi_2|^2\,\textrm{d}x}
	= \frac{\int_{\Gamma_2} |\tilde\psi_2'|^2\,\textrm{d}x}{\int_{\Gamma_2} |\tilde\psi_2|^2\,\textrm{d}x}
	\geq \inf_{\substack{0 \neq u \in H^1(\Gamma_2)\\ \int_{\Gamma_2} u\,\textrm{d}x=0}} 
	\frac{\int_{\Gamma_2} |u'|^2\,\textrm{d}x}{\int_{\Gamma_2} |u|^2\,\textrm{d}x} = \mu_2 (\Gamma_2).
\end{displaymath}
(A similar argument shows that $\mu_k (\Gamma) \geq \mu_k (\Gamma_2)$ for all $k \geq 1$.)

\begin{figure}[ht]
\centering
\begin{tikzpicture}
\draw[thick] (-6.414,1.414) -- (-5,0);
\draw[thick] (-3.586,1.414) -- (-5,0);
\draw[thick] (-5,0) -- (-5,-2);
\draw[fill] (-6.414,1.414) circle (1.5pt);
\draw[fill] (-3.586,1.414) circle (1.5pt);
\draw[fill] (-5,0) circle (1.5pt);
\draw[fill] (-5,-2) circle (1.5pt);
\node at (-5.5,0) [anchor=east] {$\Gamma$};
\draw[thick] (-1.414,1.414) to[bend left=30] (0,0);
\draw[thick] (-1.414,1.414) to[bend right=30] (0,0);
\draw[thick] (1.414,1.414) to[bend left=30] (0,0);
\draw[thick] (1.414,1.414) to[bend right=30] (0,0);
\draw[thick] (0,0) to[bend left=30] (0,-2);
\draw[thick] (0,0) to[bend right=30] (0,-2);
\draw[fill] (-1.414,1.414) circle (1.5pt);
\draw[fill] (1.414,1.414) circle (1.5pt);
\draw[fill] (0,0) circle (1.5pt);
\draw[fill] (0,-2) circle (1.5pt);
\node at (0.75,-0.25) [anchor=west] {$\Gamma_2$};
\draw[thick,->] (3.586,1.414) to[bend left=30] (5,0.3);
\draw[thick,<-] (3.586,1.414) to[bend right=30] (4.8,-0.1);
\draw[thick,->] (6.414,1.414) to[bend left=30] (5.2,-0.1);
\draw[thick,<-] (6.414,1.414) to[bend right=30] (5,0.3);
\draw[thick,->] (5.2,-0.1) to[bend left=30] (5,-2);
\draw[thick,<-] (4.8,-0.1) to[bend right=30] (5,-2);
\draw[fill] (-3.586,1.414) circle (1pt);
\draw[fill] (6.414,1.414) circle (1pt);
\draw[fill] (5.2,-0.1) circle (1pt);
\draw[fill] (4.8,-0.1) circle (1pt);
\draw[fill] (5,0.3) circle (1pt);
\draw[fill] (5,-2) circle (1pt);
\node at (5.75,-0.25) [anchor=west] {$C$};
\end{tikzpicture}
\caption{The original graph $\Gamma$ (left); the ``doubled graph'' $\Gamma_2$ (centre); an Eulerian cycle $C$ (right), which forms a closed cycle in $\Gamma_2$, traversing every edge exactly once.}
\end{figure}
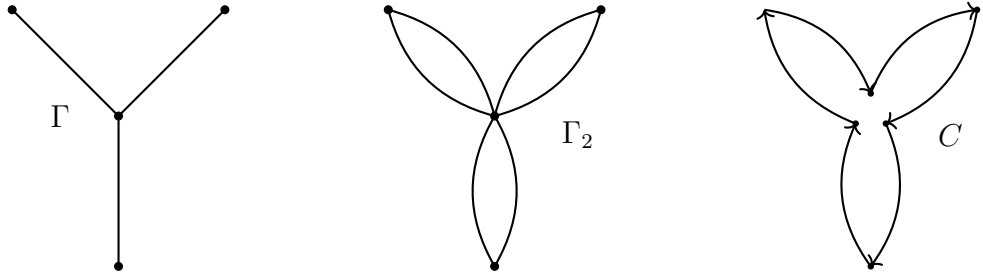

\begin{theorem}[Euler, 1736, ``Bridges of K{\"o}nigsberg'']
If all vertices of a graph $\Gamma_2$ have even degree, then there exists an \emph{Eulerian cycle} $C$ in $\Gamma_2$ traversing each edge of $\Gamma_2$ exactly once.
\end{theorem}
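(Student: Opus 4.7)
The plan is to prove this constructively by the greedy-walk-plus-splicing argument that underlies Hierholzer's algorithm. I will assume implicitly that $\Gamma_2$ is connected, which holds in the application because $\Gamma$ is connected (if not, one would apply the argument to each connected component containing at least one edge).

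The first step is to show that, starting at any vertex $v_0$, a walk that always leaves along an unused incident edge can only terminate back at $v_0$. Indeed, if such a walk is ever stuck at some intermediate vertex $v \neq v_0$, then at $v$ the walk has entered one more time than it has exited, so it has used an odd number of edges incident with $v$; since $\deg v$ is even, there must be an unused edge at $v$ along which the walk can continue. Hence the procedure produces a closed trail $C$ based at $v_0$.

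The second step is the splicing construction. If $C$ already uses every edge of $\Gamma_2$, we are done. Otherwise, let $H$ denote the subgraph of $\Gamma_2$ consisting of the edges not traversed by $C$. At every vertex, $C$ uses an even number of incident edges (twice the number of times it passes through that vertex), so all vertex degrees in $H$ remain even. By connectedness of $\Gamma_2$, some vertex $w$ on $C$ is incident with an edge of $H$; applying Step~1 inside $H$ starting at $w$ yields a closed trail $C'$ based at $w$. We then splice $C'$ into $C$ at the chosen occurrence of $w$, i.e.\ we insert the sequence $C'$ into $C$ at that point. The result is again a closed trail based at $v_0$ that strictly extends $C$.

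The final step is iteration: since each splice strictly increases the number of edges used and the total number of edges is finite, the process terminates, necessarily when no unused edges remain, producing an Eulerian cycle. The only genuine content is the verification in Step~2 that (a)~the residual subgraph $H$ still has all even degrees, which is immediate from the parity argument, and (b)~a vertex $w$ on $C$ incident with $H$ exists, which is where the connectivity of $\Gamma_2$ enters. The rest is bookkeeping.
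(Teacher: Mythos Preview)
Your proof via Hierholzer's algorithm is correct and is the standard modern proof of Euler's theorem. Note, however, that the paper does not actually prove this statement: it is quoted as a classical result (Euler, 1736) and used as a black box in the doubling-trick proof of Nicaise' inequality, so there is no ``paper's own proof'' to compare against. Your argument would serve perfectly well as a supplied proof; the only point worth making explicit is the connectivity step in your Step~2(b): if $u$ is a vertex incident with an unused edge and $u$ is not on $C$, take a path in $\Gamma_2$ from $u$ to $v_0$ and let $w$ be the first vertex on this path lying on $C$; the edge of the path arriving at $w$ comes from a vertex not on $C$ and hence cannot lie in $C$, so it lies in $H$, giving the required $w$.
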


(The cycle may pass through one or more vertices more than once.) Map each function $u \in H^1 (\Gamma_2)$ onto a function $\tilde u \in H^1(C)$ in the obvious way; then all integrals are preserved. Since not every function in $H^1(C)$ can be transformed back into an $H^1(\Gamma_2)$-function, we may identify $H^1(\Gamma_2)$ with a (proper) subspace of $H^1(C)$, and so
\begin{align*}
	\mu_2(\Gamma) \geq \mu_2(\Gamma_2) &= \inf_{\substack{0 \neq u \in H^1(\Gamma_2)\\ \int_{\Gamma_2} u\,\textrm{d}x=0}} 
	\frac{\int_{\Gamma_2} |u'|^2\,\textrm{d}x}{\int_{\Gamma_2} |u|^2\,\textrm{d}x} = \mu_2 (\Gamma_2) \\
 &\geq
	\inf_{\substack{0 \neq u \in H^1(C) \\ \int_{C} u\,\textrm{d}x=0}} 
	\frac{\int_{C} |u'|^2\,\textrm{d}x}{\int_{C} |u|^2\,\textrm{d}x} = \mu_2 (C) = \mu_2 (C).
\end{align*}
Now $C$ is a cycle (circle) of length $2L$, which corresponds to an interval of length $2L$ with periodic boundary conditions and first nonzero eigenvalue $\mu_2 (C) = \frac{4\pi^2}{(2L)^2}$. Summarising,
\begin{displaymath}
	\mu_2 (\Gamma) \geq \mu_2 (\Gamma_2) \geq \mu_2 (C) = \frac{4\pi^2}{(2L)^2} = \frac{\pi^2}{L^2}.
\end{displaymath}
This completes the alternative proof of Theorem~\ref{thm:nicaise}.

\begin{remark}
\label{rem:eulerian-cycle}
If $\Gamma$ already contains an Eulerian cycle, then the ``doubling trick'' is not necessary:
\begin{displaymath}
	\mu_2 (\Gamma) \geq \mu_2 (\text{cycle of length } L) = \frac{4\pi^2}{L^2}.
\end{displaymath}
Actually, this inequality holds for a more general class of graphs, known as \emph{doubly connected graphs} (Theorem of Band--L\'evy, \cite[Theorem~2.1(2)]{bandlevy}). Intuitively, graphs which are more connected, i.e. have more non-overlapping paths between any pairs of points, necessarily have larger $\mu_2$ and thus faster convergence of diffusion processes to equilibrium. Higher graph connectivity and its effect on the eigenvalues is also explored in \cite{bkkm1}. This closely parallels results for discrete graph Laplacians which have been known for many decades \cite{fiedler,mohar}, where the discrete counterpart of $\mu_2$ is even called the \emph{algebraic connectivity} of the graph.
\end{remark}

We shall explore the related principle that loosening connections, or ``cutting through the graph'' lowers the eigenvalues. This is a prototypical \emph{surgery principle}: one examines how making a local topological, geometric or metric change to a graph (``surgery'') affects its Laplacian spectrum. Some such principles were implicit in the works of Nicaise \cite{nicaise} and Friedlander \cite{friedlander}, but only started being studied systematically in the 2010s with \cite{kurasov,kmn}, and are arguably now recognised as a powerful collection of techniques. The standard reference is probably \cite{bkkm2}.

\begin{definition}
We say $\widetilde\Gamma$ is formed from $\Gamma$ by \emph{cutting through the vertex $v \in V(\Gamma)$} if $v$ is replaced by $p \geq 2$ vertices $v_1,\ldots,v_p \in V(\widetilde\Gamma)$ such that all other incidence and adjacency relations are preserved.
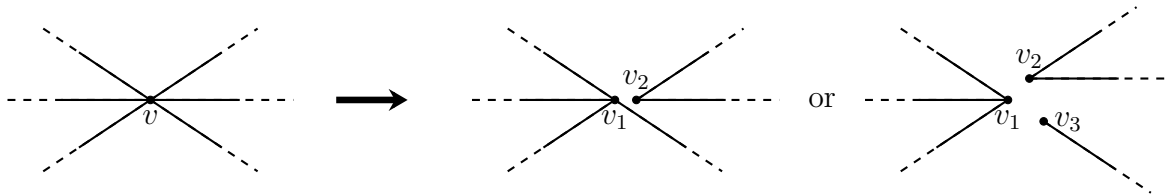
\begin{figure}[ht]
\centering
\begin{tikzpicture}[scale=0.95]
\draw[thick,dashed] (-5,1) -- (-2,-1);
\draw[thick,dashed] (-5.5,0) -- (-1.5,0);
\draw[thick,dashed] (-5,-1) -- (-2,1);
\draw[thick] (-4.5,0.6667) -- (-2.5,-0.6667);
\draw[thick] (-4.75,0) -- (-2.25,0);
\draw[thick] (-4.5,-0.6667) -- (-2.5,0.6667);
\draw[fill] (-3.5,0) circle (1.5pt);
\node at (-3.5,0) [anchor=north] {$v$};
\draw[-{Stealth[scale=0.5,angle'=60]},line width=2.5pt] (-0.9,0) -- (0.1,0);
\draw[thick,dashed] (1,0) -- (3,0);
\draw[thick] (1.75,0) -- (3,0);
\draw[thick,dashed] (1.5,1) -- (4.5,-1);
\draw[thick] (2,0.6667) -- (4,-0.6667);
\draw[thick,dashed] (1.5,-1) -- (3,0);
\draw[thick] (2,-0.6667) -- (3,0);
\draw[fill] (3,0) circle (1.5pt);
\node at (3,0) [anchor=north] {$v_1$};
\draw[thick,dashed] (3.3,0) -- (5.3,0);
\draw[thick] (3.3,0) -- (4.55,0);
\draw[thick,dashed] (3.3,0) -- (4.8,1);
\draw[thick] (3.3,0) -- (4.3,0.6667);
\draw[fill] (3.3,0) circle (1.5pt);
\node at (3.3,0) [anchor=south] {$v_2$};
\node at (5.9,0) {or};
\draw[thick,dashed] (6.5,0) -- (8.5,0);
\draw[thick] (7.25,0) -- (8.5,0);
\draw[thick,dashed] (7,1) -- (8.5,0);
\draw[thick,dashed] (7,-1) -- (8.5,0);
\draw[thick] (7.5,0.6667) -- (8.5,0);
\draw[thick] (7.5,-0.6667) -- (8.5,0);
\draw[fill] (8.5,0) circle (1.5pt);
\node at (8.5,0) [anchor=north] {$v_1$};
\draw[thick,dashed] (9,-0.3) -- (10.5,-1.3);
\draw[thick] (9,-0.3) -- (10,-0.9667);
\draw[fill] (9,-0.3) circle (1.5pt);
\node at (9,-0.3) [anchor=west] {$v_3$};
\draw[thick,dashed] (8.8,0.3) -- (10.8,0.3);
\draw[thick] (8.8,0.3) -- (10.05,0.3);
\draw[thick,dashed] (8.8,0.3) -- (10.3,1.3);
\draw[thick] (8.8,0.3) -- (9.8,0.9667);
\draw[fill] (8.8,0.3) circle (1.5pt);
\node at (8.8,0.3) [anchor=south] {$v_2$};
\end{tikzpicture}
\caption{Two examples of cutting through a vertex of degree $6$. The first (centre) has rank $1$; the second (right) has rank $2$, and is in fact a rank $1$ cut of the first.}
\end{figure}

The \emph{rank} of the cut is the number of new vertices created, i.e.\ $p-1$. The inverse process is called \emph{gluing} vertices.
\end{definition}

\begin{lemma}
\label{lem:cut}
Suppose $\widetilde\Gamma$ is formed from $\Gamma$ by making a rank $r\geq 1$ cut of a vertex $v$ of $\Gamma$, preserving standard conditions at all vertices. Then
\begin{equation}
\label{eq:cut}
	\mu_k (\Gamma) \geq \mu_k (\widetilde\Gamma) \geq \mu_{k-r} (\Gamma)
\end{equation}
for all $k \geq 1$ (where for the lower bound we assume $k \geq r+1$).
\end{lemma}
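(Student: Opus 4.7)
The plan is to prove both inequalities in \eqref{eq:cut} via the min-max characterisation of Theorem~\ref{thm:courant-fischer}, after first setting up a clean identification of $H^1(\Gamma)$ with a closed subspace of $H^1(\widetilde\Gamma)$ of codimension exactly $r$.

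First I would establish this identification. Write $p = r+1$ for the number of new vertices $v_1,\ldots,v_p$ in $\widetilde\Gamma$ replacing $v$. As metric measure spaces, $\Gamma$ and $\widetilde\Gamma$ differ only at the vertex set, which has measure zero; in particular, $L^2(\Gamma) = L^2(\widetilde\Gamma)$ canonically, and similarly the edgewise weak derivatives of any function agree. Since $H^1$-functions on a compact metric graph are precisely the edgewise $H^1$-functions that are continuous at every vertex, a function $f \in H^1(\widetilde\Gamma)$ lies in $H^1(\Gamma)$ if and only if its traces at $v_1,\ldots,v_p$ all coincide, i.e.\ $f(v_1)=\cdots=f(v_p)$. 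The trace functionals $f \mapsto f(v_j)$ on $H^1(\widetilde\Gamma)$ are continuous and can be made to attain any prescribed values (by piecewise linear bumps near each $v_j$), so the $r = p-1$ constraints $f(v_1)-f(v_j)=0$, $j=2,\ldots,p$, are linearly independent. Hence $H^1(\Gamma)$ sits as a closed subspace of $H^1(\widetilde\Gamma)$ of codimension exactly $r$, with the Rayleigh quotient preserved under inclusion.

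For the upper bound $\mu_k(\widetilde\Gamma) \le \mu_k(\Gamma)$, I would apply the min-max characterisation \eqref{eq:lk-minmax} on $\widetilde\Gamma$ using the test subspace $U := \operatorname{span}\{\psi_1,\ldots,\psi_k\} \subset H^1(\Gamma) \subset H^1(\widetilde\Gamma)$, where $\psi_1,\ldots,\psi_k$ are the first $k$ eigenfunctions on $\Gamma$. Since every $u \in U$ satisfies $\frac{a_0(u,u)}{\|u\|_2^2} \le \mu_k(\Gamma)$, taking the max over $U$ yields $\mu_k(\widetilde\Gamma) \le \mu_k(\Gamma)$.

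For the lower bound $\mu_k(\widetilde\Gamma) \ge \mu_{k-r}(\Gamma)$ (with $k \ge r+1$), I would go the other direction. Let $\widetilde U := \operatorname{span}\{\widetilde\psi_1,\ldots,\widetilde\psi_k\} \subset H^1(\widetilde\Gamma)$, where $\widetilde\psi_j$ are the first $k$ eigenfunctions on $\widetilde\Gamma$; every $u \in \widetilde U$ has Rayleigh quotient at most $\mu_k(\widetilde\Gamma)$. Intersecting with the codimension-$r$ subspace $H^1(\Gamma)$ gives
\begin{displaymath}
    \dim\bigl(\widetilde U \cap H^1(\Gamma)\bigr) \ge k - r
\end{displaymath}
by the dimension formula. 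Picking any $(k-r)$-dimensional subspace $U' \subset \widetilde U \cap H^1(\Gamma)$ and applying \eqref{eq:lk-minmax} on $\Gamma$ gives
\begin{displaymath}
    \mu_{k-r}(\Gamma) \le \max_{0 \neq u \in U'} \frac{a_0(u,u)}{\|u\|_2^2} \le \mu_k(\widetilde\Gamma),
\end{displaymath}
as required.

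The main obstacle is really bookkeeping: verifying that the identification $H^1(\Gamma) \hookrightarrow H^1(\widetilde\Gamma)$ has the advertised codimension and preserves both the $L^2$-norm and the quadratic form $a_0$. Once this is in place, both bounds are immediate consequences of the min-max principle, with the upper bound witnessed by the eigenfunctions of $\Gamma$ pushed forward to $\widetilde\Gamma$, and the lower bound obtained by pulling the first $k$ eigenfunctions of $\widetilde\Gamma$ back into $\Gamma$ at the cost of losing at most $r$ dimensions.
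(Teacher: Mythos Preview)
Your proof is correct and follows essentially the same approach as the paper: identify $H^1(\Gamma)$ with a closed codimension-$r$ subspace of $H^1(\widetilde\Gamma)$ (the paper phrases this via $C(\Gamma) \hookrightarrow C(\widetilde\Gamma)$ and refers to Exercise~2.1(d)), then invoke the min-max principle. The paper leaves the actual min-max step as a one-line reference to Theorem~\ref{thm:courant-fischer}, whereas you spell out both directions explicitly with test subspaces and the dimension count $\dim(\widetilde U \cap H^1(\Gamma)) \geq k-r$; your version is more detailed but the underlying argument is the same.
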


See also Exercise 3.2 or \cite[Section~3]{bkkm2}. The process of cutting/gluing is more complicated if other vertex conditions are imposed, since then, unlike with the case of standard conditions, there may be no natural conditions the newly created vertices should satisfy.

\begin{proof}
A rank $r$ cut removes $r$ continuity conditions at $v$: we may identify $C(\Gamma)$ with a subspace of $C(\widetilde\Gamma)$ of codimension $r$. It follows from the definition of the $H^1$-spaces that $H^1(\Gamma) \hookrightarrow C(\Gamma)$ may also be identified with a codimension $r$ subspace of $H^1(\widetilde\Gamma)$. (Cf.\ also Exercise~2.1(d).)

\eqref{eq:cut} now follows from the min-max principle (Theorem~\ref{thm:courant-fischer}/Remark~\ref{rem:courant-fischer-graph}).
\end{proof}

\begin{corollary}[Weyl asymptotics]
\label{cor:weyl-graph}
Let $\Gamma$ be a compact metric graph with total length $L$. Then
\begin{displaymath}
	\mu_k (\Gamma) = \frac{\pi^2 k^2}{L^2} + O(k)
\end{displaymath}
as $k \to \infty$.
\end{corollary}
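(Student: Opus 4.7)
The plan is to sandwich $\mu_k(\Gamma)$ between the eigenvalues of a graph whose spectrum can be written down explicitly, namely the disjoint union of the edges of $\Gamma$ viewed as intervals with Neumann conditions at all endpoints, and then invert the resulting counting function. Specifically, let $\widetilde\Gamma$ be obtained from $\Gamma$ by cutting through every vertex $v\in V$ simultaneously, so that each vertex of degree $d_v$ is replaced by $d_v$ independent degree-one vertices. Then $\widetilde\Gamma = \bigsqcup_{j=1}^m I_j$, where each $I_j$ is an interval of length $\ell_j = |e_j|$ carrying Neumann conditions at both endpoints (the standard condition at a degree-one vertex). The total rank of the cut is the purely combinatorial quantity
$$r := \sum_{v\in V}(\deg v - 1) = 2m - n,$$
which is in particular fixed as $k\to\infty$.

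Iterating Lemma~\ref{lem:cut} one vertex at a time yields the two-sided bound
$$\mu_k(\widetilde\Gamma) \leq \mu_k(\Gamma) \leq \mu_{k+r}(\widetilde\Gamma) \qquad \text{for all } k \geq 1,$$
the upper bound being a rewriting of $\mu_{(k+r)-r}(\Gamma) \leq \mu_{k+r}(\widetilde\Gamma)$. Hence it suffices to establish the corresponding asymptotics on $\widetilde\Gamma$, because $(k+r)^2 = k^2 + O(k)$ whenever $r$ is fixed.

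To compute $\mu_k(\widetilde\Gamma)$, I would use that the spectrum of $\widetilde\Gamma$ is the multiset union, over $j=1,\ldots,m$, of the Neumann spectra $\{\pi^2 n^2/\ell_j^2 : n \in \N_0\}$ on $I_j$ (cf.\ the one-dimensional calculation at the end of Section~\ref{sec:laplacian}). The associated eigenvalue counting function is therefore
$$N(\lambda) := \#\{k : \mu_k(\widetilde\Gamma) \leq \lambda\} = \sum_{j=1}^m \left(\left\lfloor \frac{\sqrt\lambda\,\ell_j}{\pi}\right\rfloor + 1\right) = \frac{L\sqrt\lambda}{\pi} + O(1)$$
as $\lambda \to \infty$, the $O(1)$-error being bounded by $m+1$ uniformly in $\lambda$. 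Inverting the relation $k = N(\mu_k(\widetilde\Gamma))$ gives $\sqrt{\mu_k(\widetilde\Gamma)} = \pi k/L + O(1)$ and hence $\mu_k(\widetilde\Gamma) = \pi^2 k^2/L^2 + O(k)$; substituting into the sandwich above finishes the proof.

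There is no real obstacle here: the surgery input is black-boxed in Lemma~\ref{lem:cut}, and what remains is the elementary counting argument on the disjoint union of intervals. The only bookkeeping to be careful about is that the shift $k\mapsto k+r$ in the upper bound and the floor function in $N(\lambda)$ each contribute only $O(k)$ and $O(1)$ errors respectively, which is precisely the accuracy being claimed. A finer remainder would require a more delicate argument (tracking, e.g., the constant in the subleading term), but for the $O(k)$ statement as formulated, the cutting lemma combined with the explicit spectrum of intervals is sufficient.
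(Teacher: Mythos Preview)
Your proposal is correct and follows essentially the same route as the paper: the paper leaves the proof as Exercise~3.3, whose outline is precisely to reduce to a disjoint union of intervals via the cutting lemma (Lemma~\ref{lem:cut}, i.e.\ \eqref{eq:cut}), compute the spectrum there via the counting function $N(\lambda)$, and then invert. Your explicit identification of the total rank $r=2m-n$ and the sandwich $\mu_k(\widetilde\Gamma)\leq\mu_k(\Gamma)\leq\mu_{k+r}(\widetilde\Gamma)$ is exactly the intended argument.
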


\begin{proof}
Exercise 3.3.
\end{proof}

How can we make $\mu_2$ as large as possible using this idea? We saw (Exercise~3.1) that there cannot be a complementary upper bound to Theorem~\ref{thm:nicaise}. But we can still exploit the principle that gluing vertices increases the eigenvalues:

\begin{theorem}
\label{thm:flower}
(Cf.\ \cite[Theorem~4.2]{kkmm}.) Let $\Gamma$ be a compact metric graph with total length $L>0$ and $m$ edges. Then
\begin{displaymath}
	\mu_2 (\Gamma) \leq \frac{\pi^2m^2}{L^2} = \frac{\pi^2}{(\text{mean edge length})^2}.
\end{displaymath}
The inequality is sharp: there is equality if (but not only if) $\Gamma$ is an \emph{equilateral flower graph}.
\end{theorem}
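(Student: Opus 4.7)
My plan is to reduce to a flower by surgery, then construct an explicit test function.  First, the dual formulation of Lemma~\ref{lem:cut} says that gluing vertices can only \emph{raise} the eigenvalues, since $H^1$-functions on the glued graph form a subspace of those on the original.  Identifying all $n$ vertices of $\Gamma$ into a single vertex produces a flower graph $F$ with $m$ loops of lengths $\ell_1,\ldots,\ell_m$ summing to $L$, and $\mu_2(\Gamma) \leq \mu_2(F)$.  The problem reduces to showing $\mu_2(F) \leq \pi^2 m^2/L^2$ on any such flower.

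To bound $\mu_2(F)$, I would exhibit a test function at the critical frequency $\omega := \pi m/L$.  On loop $i$ (with local coordinate $x \in [0,\ell_i]$), set
\[
  f|_i(x) := \cos(\omega x) + \tan(\omega \ell_i/2)\sin(\omega x),
\]
which evaluates to $1$ at both endpoints of the loop, so $f$ is continuous at the central vertex.  Since $-f'' = \omega^2 f$ edgewise, integration by parts loop-by-loop yields
\[
  \int_F |f'|^2\,\mathrm{d}x \;=\; \omega^2 \int_F f^2\,\mathrm{d}x \;-\; 2\omega\,\phi(\omega),
  \qquad \phi(\omega) := \sum_{i=1}^m \tan(\omega\ell_i/2),
\]
and a parallel calculation gives $\int_F f = 2\phi(\omega)/\omega$.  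Writing $\tilde f := f - \bar f$ and using Courant--Fischer for $\mu_2$, the desired bound $R(\tilde f) \leq \omega^2$ becomes algebraically equivalent to $\phi(\omega)\bigl(\omega L - 2\phi(\omega)\bigr) \geq 0$ at $\omega = \pi m/L$, i.e., to the condition $\phi(\pi m/L) \in [0,\pi m/2]$.

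The hard part will be verifying this condition on the secular function $\phi$, because $\tan$ is unbounded near $\pi/2$ and the equilateral configuration $\ell_i \equiv L/m$ puts each argument $\omega\ell_i/2$ exactly on the asymptote $\pi/2$.  I would attack it by a sign analysis of $\phi$ on the first interval past its leftmost pole: pigeonhole gives $\ell_{\max} \geq L/m$, hence $\omega = \pi m/L \geq \pi/\ell_{\max}$, placing $\omega$ past the leftmost asymptote, where $\phi$ jumps from $+\infty$ to $-\infty$ and then grows monotonically.  When this condition fails (chiefly in near-equilateral configurations where eigenvalue multiplicities collide), I would supplement the test space with ``loop modes'' $\sin(\pi x/\ell_i)$---supported on a single loop, vanishing at the central vertex, with Rayleigh quotient $\pi^2/\ell_i^2$---and use linear combinations over loops of equal length to enforce orthogonality to constants.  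In the fully equilateral case these loop modes are actual eigenfunctions, spanning an $(m-1)$-dimensional eigenspace at eigenvalue $\pi^2 m^2/L^2$, which delivers equality directly.
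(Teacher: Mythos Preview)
Your first step---gluing all vertices to reduce to a flower $F$---is correct and is exactly what the paper does. The second step, however, has a genuine gap. The condition $\phi(\pi m/L)\in[0,\pi m/2]$ fails not only in near-equilateral configurations but in thoroughly generic ones. For $m=3$ and $(\ell_1,\ell_2,\ell_3)=(0.5,0.3,0.2)L$, one has $\omega\ell_i/2=(0.75\pi,\,0.45\pi,\,0.3\pi)$, so
\[
\phi(\omega)=\tan(0.75\pi)+\tan(0.45\pi)+\tan(0.3\pi)\approx -1+6.31+1.38\approx 6.69,
\]
while $\pi m/2\approx 4.71$; thus $\phi\notin[0,\pi m/2]$ and your $\tilde f$ has Rayleigh quotient strictly \emph{above} $\omega^2$. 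Your own pigeonhole observation actually points the same way: $\omega$ lies past the leftmost pole of $\phi$, where $\phi$ drops to $-\infty$ and is then merely increasing, so there is no reason for it to land in the target window. Your backup using linear combinations of $\sin(\pi x/\ell_i)$ over loops of \emph{equal} length only addresses a measure-zero set of configurations and does not cover the example above.

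The paper's argument avoids all of this with a second surgery step you do not use. After passing to the flower $F$, observe that each petal is a \emph{pendant} subgraph attached at the central vertex, so deleting a petal can only raise $\mu_2$ (Lemma~\ref{lem:pendant}). Prune all but the two longest petals; what remains is a figure-8 graph $\widetilde F$ of total length $|\widetilde F|=\ell_{(1)}+\ell_{(2)}\ge 2L/m$. A short direct calculation (using that gluing a loop at the antipodal point of another does not change $\mu_2$, cf.\ Exercise~3.2) gives $\mu_2(\widetilde F)=4\pi^2/|\widetilde F|^2$, whence
\[
\mu_2(\Gamma)\le\mu_2(F)\le\mu_2(\widetilde F)=\frac{4\pi^2}{|\widetilde F|^2}\le\frac{4\pi^2}{(2L/m)^2}=\frac{\pi^2 m^2}{L^2}.
\]
No secular analysis is needed.
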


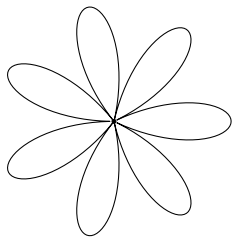
\begin{figure}[ht]
\centering
\begin{tikzpicture}[scale=0.5]
\begin{polaraxis}[grid=none, axis lines=none]
\addplot[mark=none,domain=0:360,samples=300,thick] { abs(cos(7*x/2))};
\draw[fill] (0,0) circle (1.5pt);
\end{polaraxis}
\end{tikzpicture}
\caption{A flower graph with $7$ edges.}
\end{figure}
(A flower graph is a graph with only one vertex, with all edges starting and ending at that vertex. It is equilateral if all edge lengths are equal; in this case they would each have length $L/m$.)

For the proof we need Lemma~\ref{lem:cut} and a second surgery principle:

\begin{definition}
We say that a closed subset $\Gamma' \subset \Gamma$ of a given graph $\Gamma$ is a \emph{pendant subgraph} of $\Gamma$ if it is attached to the rest of $\Gamma$ at a single point, that is, the set
\begin{displaymath}
	\overline{\Gamma'} \cap \overline{\Gamma \setminus \Gamma'}
\end{displaymath}
is a singleton.
\end{definition}

The single point of attachment may without loss of generality be assumed to be a vertex of $\Gamma$, by Remark~\ref{rem:other-vc}.

\begin{lemma}
\label{lem:pendant}
Suppose $\widetilde\Gamma$ is formed from $\Gamma$ by deleting a pendant subgraph $\Gamma' \subset \Gamma$ (equivalently, $\Gamma$ is formed from $\widetilde\Gamma$ by gluing a new graph $\Gamma'$ to $\widetilde\Gamma$ at a single vertex). Suppose standard conditions are imposed at all vertices of all graphs in question. Then
\begin{displaymath}
	\mu_k (\widetilde\Gamma) \geq \mu_k (\Gamma)
\end{displaymath}
for all $k \geq 1$.
\end{lemma}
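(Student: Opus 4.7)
The plan is to use the min-max characterisation of the eigenvalues (Theorem~\ref{thm:courant-fischer}, Remark~\ref{rem:courant-fischer-graph}) together with a ``constant extension'' operator $E: H^1(\widetilde\Gamma) \to H^1(\Gamma)$ that takes test functions on $\widetilde\Gamma$ to test functions on $\Gamma$ with the same energy but larger $L^2$-norm, and hence smaller Rayleigh quotient. This will immediately yield $\mu_k(\Gamma) \leq \mu_k(\widetilde\Gamma)$.

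Let $v$ denote the (unique) point where $\Gamma'$ was attached to $\widetilde\Gamma$, which we may assume is a vertex of both $\Gamma$ and $\widetilde\Gamma$ by Remark~\ref{rem:dummy-vertices}. Given $f \in H^1(\widetilde\Gamma)$, define
\begin{displaymath}
	(Ef)(x) := \begin{cases} f(x) & \text{if } x \in \widetilde\Gamma, \\ f(v) & \text{if } x \in \Gamma'. \end{cases}
\end{displaymath}
Since $f \in H^1(\widetilde\Gamma) \hookrightarrow C(\widetilde\Gamma)$ has a well-defined value at $v$, and since a function which is constant on $\Gamma'$ is trivially in $H^1(\Gamma')$ with vanishing derivative and equals $f(v)$ at the gluing vertex, the extension $Ef$ is continuous across $v$, is $H^1$ on every edge, and therefore belongs to $H^1(\Gamma)$. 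The map $E$ is linear and, since $f = Ef|_{\widetilde\Gamma}$, injective.

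Because $Ef$ is constant on $\Gamma'$, a direct computation gives the two key identities
\begin{displaymath}
	\int_\Gamma |(Ef)'|^2 \,\textrm{d}x = \int_{\widetilde\Gamma} |f'|^2\,\textrm{d}x, \qquad
	\int_\Gamma |Ef|^2 \,\textrm{d}x = \int_{\widetilde\Gamma} |f|^2\,\textrm{d}x + f(v)^2 |\Gamma'|,
\end{displaymath}
so that, for every $0 \neq f \in H^1(\widetilde\Gamma)$,
\begin{displaymath}
	\frac{\int_\Gamma |(Ef)'|^2\,\textrm{d}x}{\int_\Gamma |Ef|^2\,\textrm{d}x} \leq \frac{\int_{\widetilde\Gamma} |f'|^2\,\textrm{d}x}{\int_{\widetilde\Gamma} |f|^2\,\textrm{d}x}.
\end{displaymath}
Now fix $k \geq 1$, and let $U \subset H^1(\widetilde\Gamma)$ be any $k$-dimensional subspace; then $E(U) \subset H^1(\Gamma)$ is also $k$-dimensional, and by the inequality above the maximum of the Rayleigh quotient over $E(U)$ is bounded above by the maximum over $U$. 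Taking the infimum over all such $U$ and using the min-max characterisation (noting that the infimum on $\Gamma$ is taken over an even larger collection of $k$-dimensional subspaces than the image of $E$) yields
\begin{displaymath}
	\mu_k(\Gamma) \leq \inf_{\substack{U \subset H^1(\widetilde\Gamma) \\ \dim U = k}} \max_{0 \neq f \in U} \frac{\int_{\widetilde\Gamma} |f'|^2\,\textrm{d}x}{\int_{\widetilde\Gamma} |f|^2\,\textrm{d}x} = \mu_k(\widetilde\Gamma),
\end{displaymath}
which is the desired estimate. There is essentially no obstacle: the only point requiring care is verifying that $Ef \in H^1(\Gamma)$, i.e.\ that the continuous gluing at $v$ preserves membership in the Sobolev space, but this is immediate from the characterisation of $H^1(\Gamma)$ as continuous functions whose edgewise restrictions are $H^1$.
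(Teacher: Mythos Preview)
Your proof is correct. It is, however, a genuinely different argument from the one the paper gives. The paper's proof is indirect: it cuts $\Gamma$ at the attaching vertex $v$ to form the disjoint union $\Gamma' \sqcup \widetilde\Gamma$, invokes the interlacing inequality from Lemma~\ref{lem:cut} to get $\mu_k(\Gamma' \sqcup \widetilde\Gamma) \geq \mu_{k-1}(\Gamma)$, and then uses the fact that the disjoint union has $0$ as an eigenvalue of multiplicity two to shift the index, concluding $\mu_k(\widetilde\Gamma) \geq \mu_{k+1}(\Gamma' \sqcup \widetilde\Gamma) \geq \mu_k(\Gamma)$.

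Your approach via the constant extension operator is more elementary and entirely self-contained: it does not rely on Lemma~\ref{lem:cut} at all, and it makes transparent exactly why the pendant does not raise any eigenvalue (the extension costs no energy and only adds $L^2$-mass). In fact the paper explicitly acknowledges this alternative, remarking just after its proof that ``this lemma can also be proved using a test function argument, see \cite[Proposition~3.1]{rohleder}''; your argument is essentially that one. The paper's route, by contrast, has the advantage of situating the result within the general surgery/interlacing framework it is developing, showing that the pendant lemma is a formal consequence of the cutting lemma rather than requiring a separate construction.
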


\begin{proof}
Given $\Gamma$ and its subgraph $\Gamma'$ attached to $\widetilde\Gamma = \Gamma\setminus\Gamma'$ at the vertex $v$, form a new, disconnected graph
\begin{displaymath}
	\Gamma' \sqcup \widetilde\Gamma
\end{displaymath}
via a suitable cut through $v$. This is a cut of rank $1$, thus, by Lemma~\ref{lem:cut},
\begin{displaymath}
	\mu_k (\Gamma' \sqcup \widetilde\Gamma) \geq \mu_{k-1} (\Gamma)
\end{displaymath}
for all $k \geq 2$. Now the set of eigenvalues of $\Gamma' \sqcup \widetilde\Gamma$ is just the union of the set of eigenvalues of $\Gamma'$ and the set of eigenvalues of $\widetilde\Gamma$ (where eigenvalues are always repeated according to their multiplicities). Since $\mu_1 (\Gamma) = \mu_1 (\widetilde\Gamma) = 0$ has multiplicity $2$, it follows that $\mu_2 (\widetilde\Gamma)$ can, at best, correspond to $\mu_3 (\Gamma' \sqcup \widetilde\Gamma)$, $\mu_3 (\widetilde\Gamma)$ at best to $\mu_4 (\Gamma' \sqcup \widetilde\Gamma)$, and so on. Thus, in general,
\begin{displaymath}
	\mu_k (\widetilde\Gamma) \geq \mu_{k+1} (\Gamma' \sqcup \widetilde\Gamma) \geq \mu_{k} (\Gamma)
\end{displaymath}
for all $k \geq 1$.
\end{proof}

This lemma can also be proved using a test function argument, see \cite[Proposition~3.1]{rohleder}. Its sharpest form can be found in \cite[Theorem~3.10]{bkkm2}. It is intuitively clear that attaching a pendant subgraph (basically, an extra dead end) should slow the rate of diffusion. The picture is far more complicated if the subgraph being attached or deleted is not a \emph{pendant}, but rather attached at two or more points, as the following simple example shows.

\begin{example}
Let $\Gamma$ be a loop of length $L>0$, let $\widetilde\Gamma$ be an interval subgraph of $\Gamma$ of length $\ell$, and let $\Gamma'$ be its complement, an interval of length $L-\ell$; then $\Gamma$ is formed by gluing $\widetilde\Gamma$ and $\Gamma'$ twice, at their respective endpoints. Now
\begin{displaymath}
	\mu_2 (\Gamma) = \frac{4\pi^2}{L^2}, \qquad \mu_2 (\widetilde\Gamma) = \frac{\pi^2}{\ell^2},
\end{displaymath}
so that $\mu_2 (\widetilde\Gamma) \geq \mu_2 (\Gamma)$ iff $\ell \leq \frac{L}{2}$.
\end{example}

\begin{proof}[Proof of Theorem~\ref{thm:flower}]
We may certainly assume that $m \geq 2$, since otherwise $\Gamma$ is just an interval. Given such a $\Gamma$, glue all its vertices together. This forms a \emph{flower graph} $F$ with the same number $m$ of edges and same total length $L$ as $\Gamma$, and
\begin{displaymath}
	\mu_2 (\Gamma) \leq \mu_2 (F)
\end{displaymath}
by Lemma~\ref{lem:cut}. Choose the \emph{longest two} edges of $F$, then their total length is at least $2L/m$. Form a new graph $\widetilde F$ by deleting all remaining $m-2$ edges from $F$ (``pluck all but the two largest petals''). Since each edge of $F$ is a pendant subgraph of $F$ attached to the rest of $F$ at a single vertex, by Lemma~\ref{lem:pendant}
\begin{displaymath}
	\mu_2 (F) \leq \mu_2 (\widetilde F).
\end{displaymath} 
Now $\widetilde F$ is a \emph{figure-8 graph}. A direct calculation (left as an exercise, using the result of Exercise~3.2) shows that a figure-8 has the same $\mu_2$ as a circle of the same length:
\begin{displaymath}
	\mu_2 (\widetilde F) = \frac{4\pi^2}{|\widetilde F|^2} \leq \frac{4\pi^2 m^2}{(2L)^2} = \frac{\pi^2m^2}{L^2}.
\end{displaymath}
It is left as another exercise to check that the second eigenvalue of an equilateral $m$-flower graph is in fact $\frac{\pi^2m^2}{L^2}$.
\end{proof}

\subsection{Advanced surgery}
\label{sec:adv-surgery}

To illustrate the full power of the surgery methods introduced in the previous section (and more advanced surgery techniques), we briefly sketch two examples of applications of surgery methods for obtaining bounds on $\mu_2 (\Gamma)$ on the more sophisticated end of the spectrum (no pun intended), following \cite{bkkm2}. For another such application see \cite{kennedy}.

\textbf{Third proof of Theorem~\ref{thm:nicaise}: ``unfolding pendant edges''} (Berkolaiko--K.--Kurasov--Mugnolo)

\begin{definition}
Let $e_1,\ldots,e_k$ be pendant edges of $\Gamma$ (i.e.\ each should have one vertex of degree one) all attached to the same vertex $v$. We say that $\widetilde\Gamma$ is obtained from $\Gamma$ by \emph{unfolding the pendant edges} $e_1,\ldots,e_k$ if these $k$ edges are deleted and replaced by a single pendant edge of length $|e_1|+\ldots+|e_k|$.
\begin{figure}[H]
\centering
\begin{tikzpicture}[scale=0.95]
\draw[thick,dashed] (-5,1) -- (-3.5,0);
\draw[thick,dashed] (-5.5,0) -- (-3.5,0);
\draw[thick,dashed] (-5,-1) -- (-3.5,0);
\draw[thick] (-4.5,0.6667) -- (-2.5,-0.6667);
\draw[thick] (-4.75,0) -- (-2.25,0);
\draw[thick] (-4.5,-0.6667) -- (-2.5,0.6667);
\draw[fill] (-3.5,0) circle (1.5pt);
\node at (-3.5,0) [anchor=north] {$v$};
\draw[fill] (-2.5,-0.6667) circle (1.5pt);
\draw[fill] (-2.25,0) circle (1.5pt);
\draw[fill] (-2.5,0.6667) circle (1.5pt);
%\node at (-2.5,0.6) [anchor=south east] {$e_1$};
%\node at (-2.5,-0.6) [anchor=north east] {$e_k$};
%%
\draw[-{Stealth[scale=0.5,angle'=60]},line width=2.5pt] (-0.9,0) -- (0.1,0);
\draw[thick,dashed] (1,0) -- (3,0);
\draw[thick] (1.75,0) -- (3,0);
\draw[thick,dashed] (1.5,1) -- (3,0);
\draw[thick] (2,0.6667) -- (3,0);
\draw[thick,dashed] (1.5,-1) -- (3,0);
\draw[thick] (2,-0.6667) -- (3,0);
\draw[fill] (3,0) circle (1.5pt);
\node at (3,0) [anchor=north] {$v$};
\draw[thick] (3,0) -- (5.75,0);
\draw[fill] (5.75,0) circle (1.5pt);
\end{tikzpicture}
\caption{Unfolding three pendant edges at $v$.}
\end{figure}
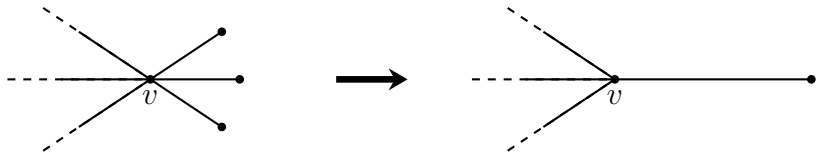
\end{definition}

This is an example of \emph{transplantation}, removing length (``mass'') from one part of the graph (which may be deleting or shortening edges) and attaching it elsewhere (inserting new edges or lengthening additional ones), usually in such a way as to preserve the total length. It is explored in detail in \cite[Section~3.3]{bkkm2}.

\begin{lemma}
\label{lem:unfold-pendant}
Suppose $\widetilde\Gamma$ is formed from $\Gamma$ by unfolding pendant edges. Then
\begin{displaymath}
	\mu_2 (\widetilde\Gamma) \leq \mu_2 (\Gamma).
\end{displaymath}
\end{lemma}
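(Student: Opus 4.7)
The plan is to apply the variational characterisation of $\mu_2$ from Remark~\ref{rem:courant-fischer-graph}: it suffices to exhibit $\tilde u \in H^1(\widetilde\Gamma)$ with $\int_{\widetilde\Gamma}\tilde u\,\textrm{d}x = 0$ whose Rayleigh quotient $\int_{\widetilde\Gamma}(\tilde u')^2\,\textrm{d}x / \int_{\widetilde\Gamma}\tilde u^2\,\textrm{d}x$ is at most $\mu_2(\Gamma)$.

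First I would fix an eigenfunction $\psi \in H^1(\Gamma)$ associated with $\mu_2(\Gamma)$, normalised so that $\int_\Gamma \psi\,\textrm{d}x = 0$. The test function $\tilde u$ will equal $\psi$ on the common subgraph $\widetilde\Gamma \setminus e = \Gamma \setminus (e_1 \cup \cdots \cup e_k)$, where $e$ is the new pendant edge of $\widetilde\Gamma$, of length $L = |e_1| + \cdots + |e_k|$. The whole problem is then to construct $v_0 \in H^1([0,L])$ (parameterising $e$ by arc length from $v$) satisfying
\[
v_0(0) = \psi(v), \quad \int_0^L v_0^p\,\textrm{d}s = \sum_{i=1}^k \int_{e_i} \psi^p\,\textrm{d}x \text{ for } p=1,2, \quad \int_0^L (v_0')^2\,\textrm{d}s \leq \sum_{i=1}^k \int_{e_i} (\psi')^2\,\textrm{d}x.
\]
The first condition ensures continuity of $\tilde u$ at $v$, hence $\tilde u \in H^1(\widetilde\Gamma)$; the $p=1$ identity yields $\int_{\widetilde\Gamma} \tilde u\,\textrm{d}x = \int_\Gamma \psi\,\textrm{d}x = 0$; and the $p=2$ identity together with the Dirichlet energy inequality give
\[
\frac{\int_{\widetilde\Gamma}(\tilde u')^2\,\textrm{d}x}{\int_{\widetilde\Gamma}\tilde u^2\,\textrm{d}x} \leq \frac{\int_\Gamma (\psi')^2\,\textrm{d}x}{\int_\Gamma \psi^2\,\textrm{d}x} = \mu_2(\Gamma),
\]
which is exactly what is required.

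The natural candidate for $v_0$ is the one-dimensional \emph{equimeasurable monotone rearrangement} of $\psi|_{e_1 \cup \cdots \cup e_k}$ on $[0,L]$. By Cavalieri this preserves every $L^p$-norm (so that the two integral identities hold with equality), and by the classical one-dimensional Polya--Szego inequality it does not increase the Dirichlet energy, giving the final inequality. The orientation of the rearrangement (increasing or decreasing) would then be chosen so as to force $v_0(0) = \psi(v)$.

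The hard part will be precisely this matching condition: forcing $v_0(0) = \psi(v)$ requires $\psi(v)$ to be either the global minimum or the global maximum of $\psi$ on the pendant star $e_1 \cup \cdots \cup e_k$. This is automatic in the ``generic'' regime $\sqrt{\mu_2(\Gamma)}\,|e_i| < \pi/2$ for all $i$, since there each $\psi|_{e_i}$ is monotone between its value at $v$ and the extremum at its Neumann free end $u_i$, and $\psi(v)$ is a simultaneous one-sided extremum across the whole star. In general, however, $\psi(v)$ may lie strictly interior to the range of $\psi$ on the pendant star, in which case a plain monotone rearrangement will not start at $\psi(v)$. The remedy I would pursue is a more delicate piecewise rearrangement, separately monotonising the superlevel and sublevel pieces $\{\psi \geq \psi(v)\}$ and $\{\psi \leq \psi(v)\}$ of the pendant star and joining them through the common level $\psi(v)$ at $s = 0$, in such a way that all three integral conditions remain intact — precisely the flavour of the transplantation machinery developed systematically in \cite[Section~3.3]{bkkm2}.
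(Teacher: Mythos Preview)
The paper itself does not give a proof of this lemma: it simply refers to \cite[Proof of Theorem~3.18(4)]{bkkm2} and remarks that the argument is ``elementary but somewhat delicate''. So there is nothing in the paper to compare against beyond that pointer.

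Your overall strategy --- keep $\psi$ on the common part, replace the pendant star by a single function $v_0$ on $[0,L]$ with the same $L^1$- and $L^2$-mass and no larger Dirichlet energy, then invoke the variational characterisation of $\mu_2$ --- is exactly the right idea, and your analysis of the generic case $\sqrt{\mu_2(\Gamma)}\,|e_i|<\pi/2$ is correct: there $\psi(v)$ really is the one-sided extremum of $\psi$ on the star and a monotone rearrangement does the job.

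The gap is in your fix for the non-generic case. You propose to rearrange the superlevel set $\{\psi\geq\psi(v)\}$ and the sublevel set $\{\psi\leq\psi(v)\}$ separately and ``join them through the common level $\psi(v)$ at $s=0$''. But $s=0$ is the \emph{endpoint} of the new edge, not an interior point, so there is nowhere to put the second piece. If instead you concatenate the two monotone pieces along $[0,L]$, you are forced into a discontinuity: starting the superlevel piece at $\psi(v)$ and running it up to $\max\psi$ on $[0,L_+]$ leaves you at height $\max\psi$ at $s=L_+$, while the sublevel piece lives entirely below $\psi(v)$ and therefore cannot be glued on continuously. No rearrangement of the two pieces on $[0,L]$ simultaneously achieves continuity, equimeasurability, and $v_0(0)=\psi(v)$. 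This is precisely why the paper flags the proof as ``delicate'' and defers to the transplantation machinery in \cite[Section~3.3]{bkkm2}; the construction there is more involved than a plain piecewise monotone rearrangement. Since you also ultimately point to that reference for the hard case, your proposal ends up at roughly the same level of completeness as the paper's own treatment --- correct in outline, with the genuinely subtle step outsourced.
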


Keeping with our running intuition about the speed of diffusion, this kind of principle (and related principles involving ``unfolding parallel edges'') says that ``diffusion (in our usual $L^2$-sense) is slowed when connections in parallel are replaced by connections in sequence''. The elementary but somewhat delicate proof is given in \cite[Proof of Theorem~3.18(4)]{bkkm2}.

\begin{proof}[Third proof of Theorem~\ref{thm:nicaise}]
Fix a graph $\Gamma$. By Lemma~\ref{lem:cut} we may assume without loss of generality that $\Gamma$ is actually a tree, since cutting through any cycles can only lower $\mu_2$. Denote by $n=n(\Gamma)$ the number of \emph{leaves} (degree one vertices) of $\Gamma$; $\Gamma$ is a path graph if and only if $n=2$. If $n \geq 3$, then $\Gamma$ must have at least one pair of neighbouring pendant edges. Create a new graph $\Gamma_1$ by unfolding these; then we have reduced the number of leaves by one, $n(\Gamma_1) = n-1$, and $\mu_2 (\Gamma_1) \leq \mu_2 (\Gamma)$ by Lemma~\ref{lem:unfold-pendant}. Repeating this argument inductively will produce a path graph, with lower $\mu_2$, after $n-2$ steps.
\end{proof}

The characterisation of equality in Theorem~\ref{thm:nicaise} can be obtained by using a refined version of Lemma~\ref{lem:unfold-pendant}. This proof, while not being generalisable to the higher eigenvalues or more highly connected graphs, does give a family of comparisons: $\mu_2$ decreases successively as the graph is progressively transformed from its original state into a more path-like object.\bigskip

\textbf{Interpolation between the theorems of Nicaise and Band--L\'evy; fourth proof of Theorem~\ref{thm:nicaise}} (Berkolaiko--K.--Kurasov--Mugnolo)

We recall that Nicaise' theorem (Theorem~\ref{thm:nicaise}) states that
\begin{displaymath}
	\mu_2 (\Gamma) \geq \frac{\pi^2}{L^2}
\end{displaymath}
for all compact graphs $\Gamma$ of total length $L$, equality being achieved (only) by path graphs (equivalently, intervals); while the Theorem of Band--L\'evy (\cite[Theorem~2.1(2)]{bandlevy}, cf.\ Remark~\ref{rem:eulerian-cycle}) states that
\begin{displaymath}
	\mu_2 (\Gamma) \geq \frac{4\pi^2}{L^2}
\end{displaymath}
for all doubly connected graphs $\Gamma$ of total length $L$, equality being achieved by loops, equivalently, flat one-dimensional tori.\footnote{There is actually a family of related minimisers owing to a basic surgery principle, but not relevant for our purposes here: we may glue the loop together at certain points without affecting the eigenvalue, if there is always an eigenfunction which takes on the same value at the points being glued. This is the refinement of Lemma~\ref{lem:cut} mentioned in the proof of Theorem~\ref{thm:flower}.} We can interpolate between the two inequalities based on the size of the \emph{doubly connected part} of $\Gamma$:

\begin{definition}
The \emph{doubly connected part} $\mathcal{D}_\Gamma$ of a graph $\Gamma$ is the unique closed subgraph of $\Gamma$ such that $x \in \mathcal{D}_\Gamma$ if and only if there exists a non-self-intersecting path in $\Gamma$ starting and ending at $x$.
\end{definition}

Equivalently, $\mathcal{D}_\Gamma$ is the largest subgraph of $\Gamma$ (not necessarily connected) such that every connected component is itself doubly connected; $\mathcal{D}_\Gamma$ may be obtained from $\Gamma$ by deleting every point $x \in \Gamma$ whose removal would disconnect $\Gamma$ (``bridges'').

\begin{figure}[ht]
\begin{center}
\begin{tikzpicture}[scale=0.8]
\coordinate (a) at (0,0);
\coordinate (b) at (2,0);
\coordinate (c) at (5,0);
\coordinate (d) at (8,0);
%\draw[fill] (0,0) circle (1.5pt);
\draw[fill] (2,0) circle (1.5pt);
\draw[fill] (5,0) circle (1.5pt);
%\draw[fill] (8,0) circle (1.5pt);
\draw[thick,bend left=90] (a) edge (b);
\draw[thick,bend right=90] (a) edge (b);
\draw[thick,bend left=90] (c) edge (d);
\draw[thick,bend right=90] (c) edge (d);
\draw[thick] (b) -- (c);
%\node at (a) [anchor=south east] {$v_-$};
%\node at (d) [anchor=south west] {$v_+$};
%\node at (b) [anchor=north west] {$v_1$};
%\node at (c) [anchor=north east] {$v_2$};
%\node at (3.5,0) [anchor=south] {$e_0$};
\draw[thick,dashed] (9,-1) -- (9,1);
\coordinate (e) at (10,0);
\coordinate (f) at (12,0);
\coordinate (g) at (15,0);
\coordinate (h) at (18,0);
%\draw[fill] (11,0) circle (1.5pt);
\draw[fill] (12,0) circle (1.5pt);
\draw[fill] (15,0) circle (1.5pt);
%\draw[fill] (19,0) circle (1.5pt);
%\node at (e) [anchor = south east] {$v_-$};
%\node at (h) [anchor = south west] {$v_+$};
%\node at (f) [anchor = north west] {$v_1$};
%\node at (g) [anchor = north east] {$v_2$};
%\node at (14.5,0) [anchor=south] {$e_0$};
%\draw (f) -- (g);
%\draw[bend left=54] (e) edge (f);
%\draw[bend right=54] (e) edge (f);
%\draw[bend left=18] (e) edge (f);
%\draw[bend right=18] (e) edge (f);
\draw[thick,bend left=90] (e) edge (f);
\draw[thick,bend right=90] (e) edge (f);
%\draw[bend left=54] (g) edge (h);
%\draw[bend right=54] (g) edge (h);
%\draw[bend left=18] (g) edge (h);
%\draw[bend right=18] (g) edge (h);
\draw[thick,bend left=90] (g) edge (h);
\draw[thick,bend right=90] (g) edge (h);
\end{tikzpicture}
\caption{A dumbbell graph (left); its doubly connected part consists of its two loops (right).}
\label{fig:dumbbell}
\end{center}
\end{figure}
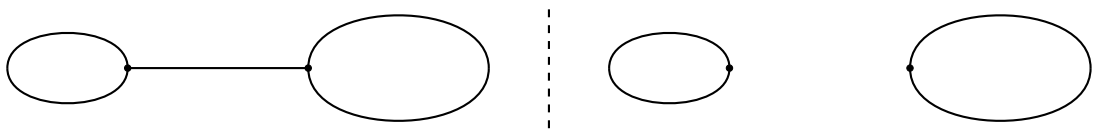

\begin{theorem} {\rm \cite[Theorem~6.3]{bkkm2}}
\label{thm:doubly-connected-part}
Suppose $\Gamma$ has doubly connected part of total length $V \in [0,L]$. Let $\mathfrak{D}$ be the dumbbell graph (cf.\ Figure~\ref{fig:dumbbell}) of total length $L$, whose two loops have equal length $V/2$ each. Then
\begin{displaymath}
	\mu_2 (\Gamma) \geq \mu_2 (\mathfrak{D}).
\end{displaymath}
\end{theorem}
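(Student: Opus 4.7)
The plan is to transform $\Gamma$ into $\mathfrak{D}$ through a sequence of length-preserving surgery operations, each of which does not increase $\mu_2$, so that $\mu_2(\Gamma) \geq \mu_2(\mathfrak{D})$ follows by transitivity. As a sanity check, the two extremes reduce to known results: at $V = 0$, $\Gamma$ is a tree and $\mathfrak{D}$ degenerates to an interval of length $L$, so the claim becomes Nicaise's inequality (Theorem~\ref{thm:nicaise}); at $V = L$, $\mathfrak{D}$ is a figure-8 with two loops of length $L/2$, and the claim essentially follows from the Band--L\'evy bound (Remark~\ref{rem:eulerian-cycle}), noting that the figure-8 has the same $\mu_2$ as a loop of the same total length, as exploited in the proof of Theorem~\ref{thm:flower}.

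For the general case, I would first simplify the tree-like part $\mathcal{T}_\Gamma := \overline{\Gamma \setminus \mathcal{D}_\Gamma}$, of total length $L-V$. This is a forest attached to $\mathcal{D}_\Gamma$; iterated application of the pendant-edge unfolding (Lemma~\ref{lem:unfold-pendant}) at the branching vertices of each pendant tree replaces it by a single pendant path, while each step does not increase $\mu_2$. Second, I would simplify the doubly connected part $\mathcal{D}_\Gamma$ using the transplantation techniques of \cite[Section~3.3]{bkkm2} combined with appropriate vertex cuts (Lemma~\ref{lem:cut}), aiming to reduce $\mathcal{D}_\Gamma$ to exactly two cycles joined by a single bridge while preserving the total length $V$ of the doubly connected part. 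Third, a symmetrization step would adjust the two cycles to have equal length $V/2$, producing $\mathfrak{D}$.

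The main obstacle is ensuring that $V$ remains constant throughout the reduction: a naive vertex cut within $\mathcal{D}_\Gamma$ converts cycle length into tree length and thus decreases $V$, while the obvious compensating re-gluing elsewhere would typically increase $\mu_2$. The transplantation moves must therefore be chosen carefully so that $V$ is conserved and $\mu_2$ does not increase simultaneously. The concluding symmetrization step, namely that among dumbbells whose two loops have lengths $V_1, V_2$ satisfying $V_1 + V_2 = V$ and whose bridge has fixed length $L-V$ the symmetric choice $V_1 = V_2 = V/2$ minimizes $\mu_2$, I would handle either by explicit analysis of the (relatively low-dimensional) secular equation of the dumbbell, or by a reflection/folding argument applied to the second eigenfunction, comparing the asymmetric dumbbell to the symmetric one via a length-preserving transplantation of the lopsided piece.
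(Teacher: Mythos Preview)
Your overall plan --- reduce $\Gamma$ to $\mathfrak{D}$ by length-preserving surgery that never increases $\mu_2$ --- is correct, and you have correctly isolated the main obstacle: controlling the doubly connected part while keeping $V$ fixed. But you have not supplied the idea that actually overcomes it, and the paper's argument runs along rather different lines than your sketch.

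The paper does \emph{not} begin by simplifying the tree part via pendant unfolding and then attack $\mathcal{D}_\Gamma$ combinatorially. Instead, it is \emph{eigenfunction-driven} from the start: fix a second eigenfunction $\psi_2$ and glue together all points of $\Gamma$ lying on each ``critical level'' (vertex values and local extrema) of $\psi_2$. By the equality case of Lemma~\ref{lem:cut} (gluing points where $\psi_2$ agrees) this does not change $\mu_2$, and it turns $\Gamma$ into a \emph{pumpkin chain} on which the eigenfunction is monotone from one end to the other; crucially, bridges remain bridges, so $V$ is automatically preserved. A local symmetrisation then makes each pumpkin equilateral. Only on this highly structured object does the transplantation step become tractable: mass is pushed outward on each nodal domain separately, and this step (as well as the final balancing of the two loops) relies on a Hadamard-type formula for the derivative of $\mu_2$ with respect to edge lengths, not on the unfolding or cutting lemmas alone.

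Your Step~2 as written --- ``transplantation plus vertex cuts to turn $\mathcal{D}_\Gamma$ into two cycles while keeping $V$ fixed'' --- is exactly where a purely combinatorial approach stalls, for the reason you yourself note: cuts destroy cycle length and the obvious compensating gluings go the wrong way. The pumpkin-chain reduction is the missing ingredient that sidesteps this tension by reorganising the graph according to the eigenfunction rather than its topology; without it (or an equivalent eigenfunction-based device), I do not see how your Step~2 can be carried out. Your Step~3 intuition is sound, but in the paper the symmetrisation of the two loops is again handled via the Hadamard formula rather than the secular equation or a reflection trick.
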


While in general there is no closed analytic formula for $\mu_2 (\mathfrak{D})$ as a function of $L$ and $V$, one can show easily that, for fixed $L$, it is a monotonically increasing function of $V$ from $\frac{\pi^2}{L^2}$ at $V=0$ to $\frac{4\pi^2}{L^2}$ at $V=L$ (where $\mathfrak{D}$ becomes a \emph{figure-8} graph); in particular, it interpolates smoothly between, and contains as special cases, the bounds of Nicaise and Band--L\'evy in function of $V=|\mathcal{D}_\Gamma|$.\footnote{In \cite{bkkm2} these claims about $\mu_2 (\mathfrak{D})$ are proved using surgery techniques -- what else? -- however the secular equation in this case should be just about simple enough to study directly.}

The proof of Theorem~\ref{thm:doubly-connected-part} (which, in particular, yields a fourth proof of Theorem~\ref{thm:nicaise}) combines all the techniques we have seen thus far, and variants:

\textbf{Step 1:} Use the previously mentioned case of equality in Lemma~\ref{lem:cut} to glue $\Gamma$ at all ``critical levels'', i.e.\ one glues together all points at which the eigenfunction has a critical value or a vertex value; this creates a ``minimally maximally connected'' graph with the same $\mu_2$, known as a \emph{pumpkin chain}, and on which the eigenfunction is monotonically increasing along the chain. (See Figure~\ref{fig:pumpkin-chainification}.)

This process preserves the size of the doubly connected part; any bridges of $\Gamma$ are preserved as bridges in the pumpkin chain. 

\textbf{Step 2:} The edges within each of the constituent pumpkins in the \emph{pumpkin chain} from Step 1 may not have the same length. Apply a \emph{local symmetrisation argument} (a local version of the symmetrisation method of Section~\ref{sec:nicaise}) to make each of the pumpkins equilateral (although edges from different pumpkins are allowed to have different lengths). In Figure~\ref{fig:pumpkin-chainification} this step is not necessary due to the symmetry of the original graph and its eigenfunction.

\textbf{Step 3:} Apply a transplantation argument to the symmetrised pumpkin chain to ``push'' the mass outwards to form a dumbbell.\footnote{This step is actually quite complicated and is performed individually on each of the two nodal domains of the pumpkin chain. While it is a transplantation, in practice it requires combining a transplantation argument with a cutting procedure and a \emph{Hadamard formula} for the derivative of $\mu_2$ as a function of the edge lengths.}

\textbf{Step 4:} This dumbbell may not be symmetric. However, ``balancing'' the loops to create the symmetric dumbbell of the theorem lowers $\mu_2$.\footnote{This, again, requires the Hadamard formula mentioned in the previous footnote.}

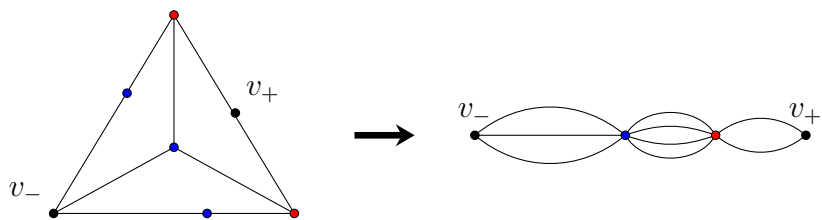
\begin{figure}[ht]
\begin{center}
\begin{tikzpicture}[scale=0.8]
%\coordinate (v) at (-6,-1.3);
%\coordinate (w) at (-2,-1.3);
%\coordinate (y) at (-4,-.2);
%\coordinate (z) at (-4,2);
%\draw[fill] (v) circle (2pt);
%\draw[fill] (w) circle (2pt);
%\draw[fill] (y) circle (2pt);
%\draw[fill] (z) circle (2pt);
%\draw (v) -- (w);
%\draw (w) -- (y);
%\draw (y) -- (z);
%\draw (z) -- (v);
%\draw (v) -- (y);
%\draw (w) -- (z);
%\draw[-{Stealth[scale=0.5,angle'=60]},line width=2.5pt] (-1,0) -- (0,0);

\coordinate (a) at (1,-1.3);
\coordinate (b) at (5,-1.3);
\coordinate (c) at (3,-.2);
\coordinate (d) at (3,2);
\coordinate (e) at (4.02,.37);
\coordinate (e1) at (3.55,-1.3);
\coordinate (e2) at (2.22,.7);
\draw (a) -- (b);
\draw (b) -- (c);
\draw (c) -- (d);
\draw (a) -- (d);
\draw (a) -- (c);
\draw (b) -- (d);

\draw[fill] (a) circle (2pt);
\draw[fill=red] (b) circle (2pt);
\draw[fill=blue] (c) circle (2pt);
\draw[fill=red] (d) circle (2pt);
\draw[fill] (e) circle (2pt);
\draw[fill=blue] (e1) circle (2pt);
\draw[fill=blue] (e2) circle (2pt);

\node at (a) [anchor=south east ] {$v_-$};
\node at (e) [anchor=south west] {$v_+$};

\draw[-{Stealth[scale=0.5,angle'=60]},line width=2.5pt] (6,0) -- (7,0);

\coordinate (g) at (8,0);
\coordinate (h) at (10.5,0);
\coordinate (i) at (12,0);
\coordinate (j) at (13.5,0);

\draw[bend left=60] (h) edge (i);
\draw[bend left=20] (h) edge (i);
\draw[bend left=-60] (h) edge (i);
\draw[bend left=-20] (h) edge (i);
\draw[bend left=40] (g) edge (h);
\draw (g) -- (h);
\draw[bend right=40] (g) edge (h);
\draw[bend right=40] (i) edge (j);
\draw[bend right=-40] (i) edge (j);

\node at (g) [anchor=south] {$v_-$};
\node at (j) [anchor=south] {$v_+$};

\draw[fill] (g) circle (2pt);
\draw[fill=blue] (h) circle (2pt);
\draw[fill=red] (i) circle (2pt);
\draw[fill] (j) circle (2pt);
\end{tikzpicture}
\end{center}
\caption{Turning a graph (here an equilateral complete graph on four edges) into a \emph{pumpkin chain}, based on the behaviour of an arbitrary but fixed second eigenfunction. The vertices $v_-$ and $v_+$ represent, respectively, where the eigenfunction reaches its minimum and maximum; its values at the other vertices are considered ``critical'', and thus all points at which it attains those values (labelled in blue and red, respectively) are glued together. The resulting pumpkin chain has the same eigenvalue and eigenfunction (up to a canonical mapping between the graphs), such that the eigenfunction is now monotonically increasing along each path from $v_-$ to $v_+$. (Each of the sets of parallel edges betweek neighbouring vertices is a \emph{pumpkin}.) Figure adapted from \cite[Figure~5.1]{bkkm2}.}
\label{fig:pumpkin-chainification}
\end{figure}

\section*{Appendix: Exercises}
\label{chapter:exercises}
\addcontentsline{toc}{chapter}{Exercises}

\begin{itemize}
\item[\textbf{1.1}]\label{ex:1.1} Use the spectral theorem to prove Courant--Fischer (Theorem~\ref{thm:courant-fischer}), and show that the minimum in \eqref{eq:l1} is attained if and only if $u$ is an eigenvector corresponding to $\lambda_1$. Can you \emph{characterise} equality in \eqref{eq:lk-minmax} and \eqref{eq:lk-maxmin}?\medskip
\item[\textbf{1.2}]\label{ex:1.2} Prove that if $u \in C^1(\overline{\Omega})$, then its classical partial derivative $\frac{\partial u}{\partial x_j} \in C(\overline{\Omega})$ is also a weak partial derivative of $u$. Conversely, prove that if $g_j$ is a weak partial derivative of $u$ and $g_j \in C(\overline{\Omega})$, then in fact $g_j$ is a classical partial derivative of $u$.\medskip
\item[\textbf{1.3}]\label{ex:1.3} Prove Lemma~\ref{lem:aq-omega}.\medskip
\item[\textbf{1.4}]\label{ex:1.4} \textbf{Schr{\"o}dinger operators.} Given a bounded Lipschitz domain $\Omega \subset \R^d$, define a Hermitian form $a_q : H^1_0 (\Omega) \times H^1_0 (\Omega) \to \C$ for $q \in L^\infty (\Omega,\R)$  by
\begin{displaymath}
	a_q (u,v) := \int_\Omega \nabla u \cdot \overline{\nabla v} + qu\overline{v}\,\textrm{d}x.
\end{displaymath}
\begin{itemize}
\item[\textbf{(a)}] Show that the form $a_q$ is bounded and $L^2$-elliptic on $H^1_0(\Omega)$, and find the operator $A_q$ on $L^2(\Omega)$ which is associated with $a_q$. Deduce that $A_q$ satisfies the conclusions of the spectral theorem, Theorem~\ref{thm:spectral}.\\
\emph{Note:} In particular, this proves Theorem~\ref{thm:laplacians-omega}(a), which corresponds to $q=0$.
\item[\textbf{(b)}] Use the variational characterisation of the eigenvalues to show that, given $q_1,q_2 \in L^\infty (\Omega,\R)$, if $q_1 \leq q_2$ almost everywhere in $\Omega$, then
\begin{displaymath}
	\lambda_k (A_{q_1}) \leq \lambda_k (A_{q_2}) \qquad \text{for all } k \geq 1.
\end{displaymath}
\end{itemize}
\item[\textbf{1.5}]\label{ex:1.5} \textbf{The Robin Laplacian.} Given a bounded Lipschitz domain $\Omega \subset \R^d$, for $\alpha \in \R$ define a form $a_\alpha : H^1(\Omega) \times H^1(\Omega) \to \C$ by
\begin{displaymath}
	a_\alpha (u,v) := \int_\Omega \nabla u \cdot \overline{\nabla v}\,\textrm{d}x + \int_{\partial\Omega} \alpha u \overline{v}\,\textrm{d}s,
\end{displaymath}
where the functions in the boundary integral are to be taken as the traces of $u$ and $\overline{v}$. Clearly $a_\alpha$ is Hermitian, and $a_\alpha$ coincides with the form $a$ from \eqref{eq:aq-omega} when $\alpha=0$.
\begin{itemize}
\item[\textbf{(a)}] Use the trace inequality \eqref{eq:trace-omega} to prove that $a_\alpha$ is bounded and $L^2$-elliptic on $H^1(\Omega)$.
%following \emph{trace inequality} to prove that $a_\alpha$ is bounded and $L^2$-elliptic on $H^1(\Omega)$: there exists $C=C(\Omega)>0$ such that
%\begin{displaymath}
%	\|u\|_{L^2(\partial\Omega)} \leq C\|u\|_{H^1(\Omega)} \qquad \text{for all } u \in H^1(\Omega)
%\end{displaymath}
%(where on the left-hand side we have written $u$ for $\trace u$).
\item[\textbf{(b)}] Show that the operator $A_\alpha$ on $L^2(\Omega)$ associated with $a_\alpha$ is given by
\begin{displaymath}
\begin{aligned}
	D(A_\alpha) &= \left\{ u \in H^1(\Omega): \Delta u \in L^2(\Omega),\, \frac{\partial u}{\partial \nu} \in L^2(\partial\Omega) \text{ and }
	=-\alpha u \text{ in } L^2(\partial\Omega)\right\},\\
	A_\alpha u &= -\Delta u,
\end{aligned}
\end{displaymath}
and deduce that $A_\alpha$ satisfies the conclusions of the spectral theorem, Theorem~\ref{thm:spectral}.
\end{itemize}
\item[\textbf{1.6}]\label{ex:1.6} The first eigenvalue of the Robin Laplacian $A_\alpha$ from the previous exercise may be characterised variationally as
\begin{displaymath}
	\lambda_1(\alpha) := \lambda_1(A_\alpha) = \min_{0\neq u \in H^1(\Omega)} \frac{\int_\Omega |\nabla u|^2\,\textrm{d}x
	+ \int_{\partial\Omega} \alpha |u|^2\,\textrm{d}x}{\int_\Omega |u|^2\,\textrm{d}x}.
\end{displaymath}
\begin{itemize}
\item[\textbf{(a)}] Prove that (on a fixed domain $\Omega$) the function $\alpha \mapsto \lambda_1(\alpha)$ is a convex monotonically increasing and continuous function of $\alpha \in \R$, and that $\lambda_1 (\alpha) \leq \lambda_1 (-\Delta^D)$, the first eigenvalue of the Dirichlet Laplacian, for all $\alpha \in \R$.
\item[\textbf{(b)}] Now suppose $\alpha>0$. 

It follows immediately from \textbf{(a)} that $\lambda_1(\alpha) \searrow \lambda_1 (-\Delta^N) = 0$, the first eigenvalue of the Neumann Laplacian, as $\alpha \to 0$. Show that $\lambda_1(\alpha) > 0$ for all $\alpha > 0$, and deduce that $\sqrt{a_\alpha (u,u)}$ defines an equivalent norm on $H^1(\Omega)$.
\item[\textbf{(c)}] Show that $\lambda_1 (\alpha) \nearrow \lambda_1 (-\Delta^D)$ as $\alpha \to \infty$.\\
\emph{Hint:} Given $\alpha_n \to \infty$, let $\psi_n$ be an eigenfunction for $\lambda_1(\alpha_n)$ with $L^2$-norm~$1$. Deduce from \textbf{(a)} and \textbf{(b)} that $(\psi_n)$ forms a bounded, and thus weakly convergent, sequence in $H^1(\Omega)$.
\end{itemize}
%\item[\textbf{2.1}]\label{ex:2.1} Adjacency matrices \ldots
\medskip
%\item[\textbf{2.1}]\label{ex:2.1} Prove that if $\Gamma$ is connected and compact (i.e. it consists of a finite number of edges of finite length), then the infimum in Definition~\ref{def:distance}(d) is attained.\medskip
%\item[\textbf{2.2}]\label{ex:2.2} Prove that any path in a compact graph $\Gamma$ is measurable and its length equals its Lebesgue measure.\medskip
\item[\textbf{2.1}]\label{ex:2.3} \textbf{$W^{1,p}_0$-spaces.} Given a (connected, compact) graph $\Gamma = (V,E)$, choose an arbitrary subset $V_D \subseteq V$ of the vertices of $\Gamma$ and define, for fixed $1 \leq p \leq \infty$,
\begin{displaymath}
	W^{1,p}_0 (\Gamma) := W^{1,p}_0 (\Gamma; V_D) := \{f \in W^{1,p}(\Gamma): f(v) = 0 \text{ for all } v \in V_D \}.
\end{displaymath}
If $p=2$, then we usually write $H^1_0 (\Gamma) = H^1_0 (\Gamma; V_D)$ in place of $W^{1,2}_0 (\Gamma) = W^{1,2}_0 (\Gamma; V_D)$.
\begin{itemize}
\item[\textbf{(a)}] Prove that $W^{1,p}_0 (\Gamma; V_D)$ is a closed subspace of $W^{1,p}(\Gamma)$.\\
\emph{Hint:} It follows from Lemma~\ref{lem:embedding-1d} that the embedding $W^{1,p}(\Gamma) \hookrightarrow C(\Gamma)$ is also continuous.
\item[\textbf{(b)}] Consider the $3$-star graph $\Gamma$ of Example~\ref{ex:3-star}. Show that if $V_D = \{v_4\}$, then $W^{1,p}_0 (\Gamma; \{v_4\})$ is isometrically isomorphic to the direct sum of three copies of $W^{1,p}_0 ([0,1]; \{1\})$. Thus, imposing a Dirichlet condition at a vertex of degree $\geq 2$ ``decouples'' the vertex.
\item[\textbf{(c)}] Still on the example of the $3$-star graph $\Gamma$, show that $W^{1,p}_0 (\Gamma; \{v_1,v_2,v_3\})$ is not isometrically isomorphic to $W^{1,p}_0 (\Gamma; \{v_4\})$. Obviously, the choice of the set $V_D$ matters!
\item[\textbf{(d)}] Returning to the general case, prove that the codimension of $H^{1}_0 (\Gamma)$ in $H^{1} (\Gamma)$ is $\#V_D$. Is the same true if $p\neq 2$?
\end{itemize}\medskip
\item[\textbf{2.2}]\label{ex:2.4} Prove Lemma~\ref{lem:form-graph}.\medskip
\item[\textbf{2.3}]\label{ex:2.5} Prove Proposition~\ref{prop:standard-laplacian} in the special case when $q=0$.\medskip
\item[\textbf{2.4}]\label{ex:2.6} \textbf{The Laplacian with (some) Dirichlet vertex conditions.} Given a (connected, compact) graph $\Gamma = (V,E)$, choose a set $V_D \subseteq V$ and consider the form
\begin{displaymath}
	a_0 (f,g) = \int_\Gamma f'\overline{g'}\,\textrm{d}x
\end{displaymath}
from \eqref{eq:form-graph}, but this time defined on $H^1_0(\Gamma;V_D)$ instead of $H^1(\Gamma)$. It is obvious that Lemma~\ref{lem:form-graph} continues to hold in this space, and thus the spectral theorem holds for the associated operator.
\begin{itemize}
\item[\textbf{(a)}] Prove that the associated operator $A_0^D$ is given by
\begin{displaymath}
\begin{aligned}
	D(A_0^D) &= \big\{ f \in H^1(\Gamma): f'' \in L^2(\Gamma),\,\, f(v)=0\,\, \forall v \in V_D,\\
	& \qquad\quad\text{ and } \sum_{e\sim v} \partial_\nu f|_e(v)=0\,\, \forall v \in V\setminus V_D\big\},\\
	A_0^D f &= -f''
\end{aligned}
\end{displaymath}
in the sense of distributions. Thus, functions in $D(A_0^D)$ satisfy a Dirichlet condition at every vertex in $V_D$ and standard conditions at all others.
\item[\textbf{(b)}] Prove that, for any $V_D \subseteq V$,
\begin{displaymath}
	\lambda_k(A_0) \leq \lambda_k (A_0^D) \leq \lambda_{k+(\#V_D)} (A_0),
\end{displaymath}
where the $\lambda_k(A_0)$ are the eigenvalues of the standard Laplacian.\\
\emph{Hint:} Use the results of \textbf{2.1}\textbf{(a)} and \textbf{(d)} together with the variational (min-max) characterisation of the eigenvalues.
\item[\textbf{(c)}] Prove that if $V_D \neq \emptyset$, then $\lambda_1 (A_0^D) > 0$.
\item[\textbf{(d)}] Repeat this exercise (with appropriate modifications) for the following operator. Given a set of vertices $V_R \subseteq V$ and $\alpha > 0$, the form is taken to be
\begin{displaymath}
	a_\alpha (f,g) = \int_\Gamma f'\overline{g'}\,\textrm{d}x + \sum_{v \in V_R} \alpha f(v) \overline{g(v)}
\end{displaymath}
on $H^1(\Gamma)$.\\
\emph{Note:} The associated operator is the Laplacian with \emph{Robin}, a.k.a.\ $\delta$, vertex conditions on $V_R$ (and standard conditions elsewhere). The constant $\alpha$, which may also be allowed to depend on the vertex, represents a $\delta$-potential being imposed at the vertices $V_R$. This constant may also be taken non-positive, but then the conclusions of \textbf{(b)} and \textbf{(c)} are not valid. The eigenvalues do, however, satisfy the same conclusions as in \textbf{1.6}.
\end{itemize}\medskip
%\item[\textbf{2.5}]\label{ex:2.6a} \textbf{The Laplacian with Robin (a.k.a.\ $\delta$) or mixed vertex conditions.}
\item[\textbf{2.5}]\label{ex:2.7} \textbf{The discrete Laplacian} on the $3$-star graph. We will study a prototypical discrete graph which is the analogue of the $3$-star graph of Example~\ref{ex:3-star}.
\begin{itemize}
\item[\textbf{(a)}] With the vertex numbering of Example~\ref{ex:3-star} (in particular, where $\deg v_4 = 3$), write down the \emph{adjacency matrix} $A$ of $\Gamma$, that is, $A_{ij} = 1$ if there is an edge between $v_i$ and $v_j$, and $0$ otherwise.
\item[\textbf{(b)}] Let $D$ be the $4\times 4$-diagonal matrix whose $(i,i)$-th entry is the degree of the vertex $v_i$, and let
\begin{displaymath}
	L = D - A.
\end{displaymath}
Determine $L$, and determine its spectrum. $L$ is the \emph{discrete Laplacian} on $\Gamma$. (Observe that $L$ is a symmetric, real-valued matrix.)
\item[\textbf{(c)}] Show that a vector $u \in \C^4$ satisfies $Lu = 0$ if and only if $u$ has the \emph{mean value property}: for every vertex $v_k$,
\begin{displaymath}
	u(v_k) = \frac{1}{\deg v_k} \sum_{v_j \sim v_k} u(v_j).
\end{displaymath}
\end{itemize}
These definitions and properties hold for the discrete Laplacian on \emph{any} finite graph $\Gamma$.\medskip
\item[\textbf{3.1}]\label{ex:3.1} Let $\Gamma_m$ be the (equilateral) \emph{$m$-star graph} consisting of $m$ edges of length $\frac{1}{m}$ each, attached together at a vertex of degree $m$. (That is, formally, $E=\{e_1,\ldots,e_m\}$, $V=\{v_1,\ldots,v_{m+1}\}$, with $e_k \sim v_k v_{m+1}$, where $\deg v_k = 1$ for $k=1,\ldots,m$ and $\deg v_{m+1} = m$.)
\begin{itemize}
\item[\textbf{(a)}] Determine the lowest nontrivial eigenvalue $\mu_2 (\Gamma_{m})$ of the standard Laplacian on $\Gamma_m$, and conclude that $\mu_2 (\Gamma_{m}) \to \infty$ even though $|\Gamma_m|=1$ for all $m \geq 3$.
\item[\textbf{(b)}] Describe all eigenvalues and eigenfunctions of $\Gamma_m$. Take care with the high multiplicities.
\end{itemize}
\item[\textbf{3.2}]\label{ex:3.2} Use the characterisation of equality in \eqref{eq:lambda2-rq-graph} to prove the following special case of equality in Lemma~\ref{lem:cut}: Suppose $\Gamma$ is formed from $\widetilde\Gamma$ by gluing together two vertices $v_1$ and $v_2$, and suppose also that there is an eigenfunction $\psi_2$ for $\mu_2(\widetilde\Gamma)$ such that $\psi_2(v_1) = \psi_2(v_2)$. Then $\mu_2(\Gamma) = \mu_2 (\widetilde\Gamma)$.\medskip
\item[\textbf{3.3}]\label{ex:3.3} \textbf{Weyl asymptotics.} In this exercise we will prove Corollary~\ref{cor:weyl-graph}, that if $\Gamma$ has total length $L$, then $\mu_k (\Gamma) = \frac{\pi^2 k^2}{L^2} + o(k^2)$ as $k \to \infty$.
\begin{itemize}
\item[\textbf{(a)}] Prove that the assertion holds if $\Gamma$ is just an interval of length $L$.
\item[\textbf{(b)}] Use \textbf{(a)} to show that the assertion also holds if $\Gamma$ is a disjoint union of $m$ intervals (of lengths $\ell_1,\ldots,\ell_m > 0$).\\
\emph{Hint:} It may be more practical to work with the \emph{counting function}
\begin{displaymath}
	N(\lambda) := \#\{k \in \N: \mu_k (\Gamma) \leq \lambda \}.
\end{displaymath}
\item[\textbf{(c)}] Use \textbf{(b)} and \eqref{eq:cut} to obtain the Weyl asymptotics for an arbitrary compact metric graph $\Gamma$.
\item[\textbf{(d)}] Prove that the Weyl asymptotics is still valid if any mix of standard, Dirichlet and Robin conditions (with $\alpha>0$ for simplicity) is imposed at the vertices.
\end{itemize}
\item[\textbf{3.4}] \begin{itemize}\item[\textbf{(a)}] Prove \emph{Rohleder's inequality} \cite[Theorem~3.4]{rohleder}: Suppose $\Gamma$ is a \emph{tree graph}, i.e. a graph with no cycles, and define its \emph{diameter} to be the longest path in the graph,
\begin{displaymath}
	D:= \max \{\dist (x,y): x,y \in \Gamma \}.
\end{displaymath}
Then, for any $k \geq 1$,
\begin{displaymath}
	\mu_k (\Gamma) \leq \frac{\pi^2k^2}{D^2} = \mu_k (\text{interval of length $D$}).
\end{displaymath}
\item[\textbf{(b)}] (Hard.) Does the same inequality hold if $\Gamma$ is not a tree graph?
\item[\textbf{(c)}] (Less hard but still hard.) Prove that no corresponding lower bound is possible. That is, find a sequence of tree graphs $\Gamma_n$ such that each has diameter $1$, but $\mu_2 (\Gamma_n) \to 0$ as $n \to \infty$.
\end{itemize}
%\item[\textbf{3.4}]\label{ex:3.4} In this exercise we will give an easy example which shows that $\mu_2(\Gamma)$ need not increase if a non-pendant subgraph of it is deleted. Let $\Gamma$ be the graph consisting of a single edge (interval) of length
\end{itemize}

%%% REFERENCES %%%
{\small\bibliography{bibliography}}
% Please, do not change the above line and do not insert your references
% into this file.  Instead, insert your references into the cimart.bib file.
% See cimart.bib for further instructions.

\EditInfo{October 9, 2023}{January 19, 2024}{Ana Cristina Moreira Freitas, Diogo Oliveira e Silva, Ivan Kaygorodov, Carlos Florentino}

\end{document}